\newtheorem{theorem}{Theorem}[section]
\newtheorem{corollary}[theorem]{Corollary}
\newtheorem{lemma}[theorem]{Lemma}
\newtheorem{remark}{Remark}[section]
\def\hdiv{H(\mbox{div\,},{\mathcal D})}
\def\hdivN{H_{\Gamma_N}(\mbox{div\,},{\mathcal D})}
\def\L2{L^2({\mathcal D})}
\def\h01{H^1_0({\mathcal D})}
\def\h10{H^1_0({\mathcal D})}
\def\tO{\widetilde{\mathcal D}}
\def\tphi{\widetilde\phi}
\def\b0{{\pmb 0}}
\def\bn{{\pmb n}}
\def\bs{{\pmb\sigma}}
\def\bst{\tilde{\pmb\sigma}}
\def\bt{{\pmb\tau}}
\def\Cp{{\mathcal P}}
\def\RT{{\mathcal {RT}}}
\def\cth{{\mathcal T}_h}
\begin{document}
\date{}
\title[Mixed methods for degenerate elliptic problems]
{Mixed methods for degenerate elliptic problems
and application to fractional laplacian}

\author{Mar\'\i a E. Cejas}
\address{Departamento de Matem\'atica \\
    Facultad de Ciencias Exactas\\
    Universidad Nacional de La Plata\\
    Calle 50 y 115\\
(1900) La Plata, Prov. de Buenos Aires  \\
    Argentina.}
\email{mec.eugenia@gmail.com}

\author{Ricardo G. Dur\'an}
\address{IMAS (UBA-CONICET) and Departamento de Matem\'atica\\
    Facultad de Ciencias Exactas y Naturales\\
    Universidad de Buenos Aires\\
    Ciudad Universitaria\\
    (1428) Ciudad Aut\'onoma de Buenos Aires\\
    Argentina.}
\email{rduran@dm.uba.ar}

\author{Mariana I. Prieto}
\address{INMABB (UNS-CONICET) and Departamento de Matem\'atica\\
    Universidad Nacional del Sur\\
    Av. Alem 1253\\
    (8000) Bah\'\i a Blanca, Prov. de Buenos Aires\\
    Argentina.}
\email{miprieto@uns.edu.ar}

\thanks{Supported by ANPCyT under grant PICT 2014-1771, by CONICET under grant 11220130100006CO  and by Universidad de Buenos Aires under grant 20020120100050BA. The first author has a fellowship from CONICET, Argentina.}

\keywords{Mixed finite elements, Degenerate elliptic problems, Fractional Laplacian}

\subjclass[2010]{Primary: 65N30; Secondary: 35J70}

\begin{abstract}
    We analyze the approximation by mixed finite element methods of solutions of
    equations of the form $-\mbox{div\,} (a\nabla u) = g$, where the coefficient $a=a(x)$ can
    degenerate going to cero or infinity.
    First, we extend the classic error analysis to this case provided that the
    coefficient $a$ belongs to the Muckenhoupt class $A_2$.
    The analysis developed applies to general mixed finite element spaces satisfying the
    standard commutative diagram property, whenever some stability and interpolation
    error estimates are valid in weighted norms. Next, we consider in detail the case
    of Raviart-Thomas spaces of lowest order, obtaining optimal order error estimates for
    general regular elements as well as for some particular anisotropic ones which are of
    interest in problems with boundary layers. Finally we apply the results to a problem
    arising in the solution of the fractional Laplace equation.
\end{abstract}

\maketitle

\section{Introduction}
\label{intro}
\setcounter{equation}{0}
In this paper we analyze the approximation by mixed finite element
methods of degenerate second order elliptic problems. There is a
vast bibliography concerning this kind of methods (see for example
the books \cite{BBF, BBDDFF} and references therein). However, as
far as we know, only very few papers have considered the
degenerate case (we can mention \cite{A,MP}).

Let ${\mathcal D}\subset{\mathbb R}^n$ be a bounded Lipschitz polytope
and $a$ be a non-negative measurable function.
We assume that the boundary is decomposed into two disjoint parts
$\Gamma_D$ and $\Gamma_N$.
Given $g\in L^2({\mathcal D})$ and $f\in L^2(\Gamma_N)$ we consider the problem

\begin{equation}
\label{ep}
\left\{
\begin{array}{rl}
-\mbox{div\,} (a\nabla u) = g &\ \ \mbox{in} \ {\mathcal D}\\
u=0 & \ \ \mbox{on} \ \Gamma_D\\
-a\nabla u\cdot\bn=f &\ \ \mbox{on}\ \Gamma_N
\end{array}
\right.
\end{equation}
where $\bn$ denotes the unit exterior normal vector.
If $\Gamma_N=\partial{\mathcal D}$ we assume the usual compatibility
condition $\int_{\mathcal D} g=\int_{\partial{\mathcal D}}f$.

We have written the problem in this form in order to simplify notation.
However, it is easy to see that
all our arguments apply to general problems where
the coefficient $a$ is replaced by a matrix $A=A(x)$
satisfying
$\lambda a(x)|\xi|^2\le \xi^TA(x)\xi\le\Lambda a(x) |\xi|^2$,
for all $x\in{\mathcal D}$,
where  $\lambda$ and $\Lambda$ are positive constants.

We are interested in degenerate problems in the sense that the
coefficient $a$ can become infinite or zero in subsets of
$\overline{\mathcal D}$ with vanishing $n-$dimensional measure.
We will assume that $a$ belongs to the Muckenhoupt class $A_2$,
in particular $a^{-1}\in L^1_{loc}({\mathcal D})$ and, therefore,
the usual mixed method is well defined.

Recall that a non-negative measurable function
$a\in L^1_{loc}({\mathbb R}^n)$ belongs to $A_2$ if
$$
[a]_{A_2}:=\sup_{Q} \left(\frac{1}{|Q|}
\int_Q a \right)\left(\frac{1}{|Q|}
\int_ Q a^{-1} \right)<\infty,
$$
where the supremum is taken over all cube $Q$ with
faces parallel to the coordinate axes.

The class $A_2$ was introduced to characterize
the weights for which the Hardy-Littlewood maximal operator is bounded in the associated weighted norm
(See for instance \cite{CF,M}). After that, it was used in the theory of elliptic equations
(see for example the pioneering work \cite{FKS}) and, more recently,
in the analysis of finite element approximations \cite{AGM,NOS,NOS2}.

When dealing with anisotropic estimates we will work with the
more restrictive {\it strong $A_2$} class, which will be denoted by
$A^s_2$ and is defined by
$$
[a]_{A^s_2}:=\sup_{R} \left(\frac{1}{|R|}
\int_R a \right)\left(\frac{1}{|R|}
\int_R a^{-1}\right)<\infty.
$$
where the supremum is taken now over all $n$-dimensional rectangles
with faces parallel to the coordinate axes. It is known that
$a\in A^s_2$ if and only if $a$ belongs to $A_2$ of one variable
for each variable, uniformly in the other variables (see \cite{GCRF, K}).

Given a weight $a$, for any measurable set $S$ we will denote with $L_a^2(S)$ the
usual Hilbert space with measure $a\,dx$. We will also work
with the weighted Sobolev space
$$
H_a^1(S)=\left\{v\in L_a^2(S)\,:\, |\nabla v|\in L_a^2(S)\right\}
$$
with its natural norm. We will omit the domain in these notations
when it is clear from the context.

Under appropriate assumptions on $a$ (particularly if $a\in A_2$)
and the data $f$ and $g$, it is possible to prove by standard arguments that there
exists a unique solution of problem
\eqref{ep} belonging to $H_a^1({\mathcal D})$.

Introducing the variable vector field $\bs=-a\nabla u$,
problem (\ref{ep}) can be transformed into the equivalent
first order system
\begin{equation}
\label{fos}
\left\{
\begin{array}{rl}
\bs + a \nabla u = 0 &\ \ \mbox{in} \ {\mathcal D}\\
\mbox{div\,}\bs=g&\ \ \mbox{in} \ {\mathcal D}\\
u=0 & \ \ \mbox{on} \ \Gamma_D\\
\bs\cdot\bn=f &\ \ \mbox{on}\ \Gamma_N
\end{array}
\right.
\end{equation}
Then, mixed finite element methods are based on a weak formulation
of this system and they approximate simultaneously $\bs$ and $u$.
One motivation for using this type of methods is that, in many applications,
the variable of physical interest is $\bs$ and, therefore, it might be
more efficient to approximate it directly instead of obtaining it from
a computed approximation of $u$. A typical example of this situation
is the Darcy equation arising in the simulation of flows in porous media.
Indeed, it is many times argued that $\bs$ is smoother than
$\nabla u$. Although this is probably true in practice, it is not possible to give a
mathematical foundation to this statement in general (see \cite{FO} for an interesting
discussion on this subject).

As an application of our results we will consider
a problem arising in the solution of the fractional Laplace equation $(-\Delta)^sv=f$.
As we will show, in the case $0<s<1/2$, the mixed method is
more convenient than the standard one in the sense that almost optimal order of
convergence can be obtained with a weaker grading of the meshes.

The rest of the paper is organized as follows.
In Section \ref{mixed approximations} we recall the mixed finite element method
for Problem \eqref{ep} and extend the classic error analysis to the case of degenerate
problems. A fundamental tool is the existence of right inverses of the divergence
in weighted norms when the weight belongs to the class $A_2$. The analysis given in
this section can be applied to general mixed finite element spaces which satisfy
the so called commutative diagram property whenever a
stability property in a weighted norm for the interpolation operator is valid.
Next, in Section \ref{RT}, we consider the case of Raviart-Thomas elements of lowest
order and prove the stability property mentioned above and error estimates
in weighted norms under the regularity assumption on the family of meshes.
Then, in Section \ref{RT anisotropicos}, we continue the analysis for
the Raviart-Thomas spaces of lowest order and prove some weighted interpolation error
estimates, where the weights involve the distance to some part of the boundary, for anisotropic rectangular and prismatic
elements which are of interest
in problems with boundary layers. An important tool in this part of the analysis is the so called
improved Poincar\'e inequality. Finally, in Section \ref{fraccionario},
we consider the approximation of the fractional Laplace equation which leads to
a particular degenerate problem of the type considered in the previous sections.
We show in this example how the weighted error estimates proved for anisotropic elements can
be used to design a priori adapted meshes giving almost optimal order with respect to
the number of degrees of freedom. We include in this section some numerical results.

\section{Mixed finite element approximations}
\label{mixed approximations}
\setcounter{equation}{0}
First we recall
some usual notation and known results
on mixed methods. The appropriate space for the vector
variable is
$$
\hdiv = \{\bt\in L^2({\mathcal D})^n : \mbox{div\,}\bt \in L^2({\mathcal D})\}
$$
which is a Hilbert space with norm given by
$$
\|\bt\|^2_{\hdiv}=\|\bt\|^2_{\L2} + \|\mbox{div\,}\bt\|^2_{\L2}.
$$
Moreover, since in the mixed formulation Neumann type boundary
conditions are imposed in an essential way, we will work with
the subspace
$$
\hdivN =\{\bt\in\hdiv  : \, \bt\cdot\bn=0 \ \ \mbox{on}\ \ \Gamma_N\}.
$$
Dividing by $a$, the first equation in (\ref{fos}) can
be rewritten as
$$
a^{-1}\,\bs + \nabla u = 0 \ \ \mbox{in} \ {\mathcal D},
$$
and multiplying by test functions and integrating by parts, we obtain
the standard weak mixed formulation of problem (\ref{fos}),
namely, find $\bs\in\hdiv$ and $u\in\L2$ such that
\begin{equation}
\label{Neumann}
\bs\cdot\bn=f \ \quad \mbox{on}\ \ \Gamma_N
\end{equation}
and
\begin{equation}
\label{mf}
\left\{
\begin{aligned}
\int_{{\mathcal D}} a^{-1}\,\bs\cdot\bt - \int_{{\mathcal D}} u\, \mbox{div\,}\bt
&= 0
&\forall \bt \in \hdivN\\
\int_{{\mathcal D}} v\,\mbox{div\,}\bs
&= \int_{{\mathcal D}} g v
&\forall v\in\L2
\end{aligned}
\right.
\end{equation}
Observe that the Dirichlet boundary condition is implicit in the weak
formulation. When $\Gamma_N=\partial{\mathcal D}$, $\L2$ has to be replaced by $L^2_0({\mathcal D})$,
the subspace of functions with vanishing mean value.

As usual, the error analysis is divided in two steps. The first one consists
in proving estimates for the finite element approximation error in terms
of the error for some appropriate interpolation or projection operator.
This part of the analysis can be done for general mixed finite
element spaces provided they satisfy the so called commutative diagram
property as well as some weighted stability estimates for the appropriate projections.
Therefore, we will develop this part of the error analysis for general spaces stating
the necessary assumptions that afterwards have to be proved for each particular choice
of approximation spaces. The second part consists in estimating the interpolation error.
For simplicity, we will restrict this analysis to the lowest order Raviart-Thomas elements. Higher order
elements as well as other approximation spaces could be treated similarly but this
require non trivial technical modifications.

We assume that we have a family of partitions $\{\cth\}$ of the domain ${\mathcal D}$
such that each $\cth$
is consistent with the boundary conditions, i. e.,
the exterior boundary of an element is completely contained in $\Gamma_D$
or in $\Gamma_N$.
Associated with these partitions we assume that we have finite element spaces
${\pmb S}_h\subset\hdiv$, $V_h\subset\L2$ (or $V_h\subset L^2_0({\mathcal D})$ when $\Gamma_N=\partial{\mathcal D}$),
such that, if
$$
{\pmb S}_{h,N}={\pmb S}_h\cap\hdivN,
$$
then
\begin{equation}
\label{div S=V}
\mbox{div\,}{\pmb S}_{h,N}=V_h
\end{equation}
and there exists an operator $\Pi_h:{\pmb S}\longrightarrow{\pmb S}_h$, defined in an appropriate subspace
${\pmb S}\subset\hdiv$ containing the solution $\bs$, such that, if $\bt\in{\pmb S}\cap\hdivN$ then
$\Pi_h\bt\in{\pmb S}_{h,N}$ and, for all $\bt\in{\pmb S}$,
\begin{equation}
\label{prop fundamental}
\int_{\mathcal D}\mbox{div\,}(\bt-\Pi_h\bt)v=0
\qquad \forall v\in V_h.
\end{equation}
Introducing the $L^2$-orthogonal projection
$P_h: L^2({\mathcal D})\longrightarrow V_h$,
\eqref{div S=V} and \eqref{prop fundamental} yield the commutative diagram property
\begin{equation}
\label{propiedad conmutativa}
\mbox{div\,}\Pi_h=P_h\mbox{div\,}.
\end{equation}
The mixed finite element approximation of problem (\ref{fos})
is given by
$$
(\bs_h, u_h)\in {\pmb S}_h\times V_h
$$
such that,
\begin{equation}
\label{discrete Neumann}
\bs_h\cdot\bn=\Pi_h\bs\cdot\bn \quad\mbox{on}\ \ \Gamma_N
\end{equation}
and
\begin{equation}
\label{mixedfem}
\left\{
\begin{aligned}
\int_{{\mathcal D}} a^{-1}\,\bs_h\cdot\bt - \int_{{\mathcal D}} u_h\, \mbox{div\,}\bt
&= 0
&\forall \bt \in {\pmb S}_{h,N}, \\
\int_{{\mathcal D}} v\,\mbox{div\,}\bs_h,
&= \int_{{\mathcal D}} g v
&\forall v\in V_h.
\end{aligned}
\right.
\end{equation}
Existence and uniqueness of the discrete solution and the following
error estimate follow by well known arguments (see for example \cite{BBF,BBDDFF}).
For completeness we include the proof of the error estimate to show that
the usual arguments can be adapted for degenerate problems and for the mixed boundary conditions considered here.
We neglect numerical integration errors assuming that all the integrals
can be computed exactly.

\begin{lemma}
\label{error sigma}
Assume that $a^{-1}\in L^1({\mathcal D})$ and $\bs\in L^2_{a^{-1}}({\mathcal D})^n$.
If $\bs$ is the solution of (\ref{Neumann}) and (\ref{mf})and $\bs_h$
that of  (\ref{discrete Neumann}) and (\ref{mixedfem}),
then
$$
\|\bs - \bs_h\|_{L^2_{a^{-1}}}
\le \|\bs-\Pi_h\bs\|_{L^2_{a^{-1}}}.
$$
\end{lemma}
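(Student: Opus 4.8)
The plan is to run the classical Fortin-type argument, observing that the only role of the weight is to replace the usual $L^2$ inner product of vector fields by the bilinear form $(\bt_1,\bt_2)\mapsto\int_{\mathcal D} a^{-1}\bt_1\cdot\bt_2$, which is still an inner product because $a^{-1}$ is a.e.\ positive and for which every quantity appearing below is finite by the hypotheses $a^{-1}\in L^1({\mathcal D})$ and $\bs\in L^2_{a^{-1}}({\mathcal D})^n$ (in particular $\Pi_h\bs\in{\pmb S}_h$ is bounded, so $\|\bs-\Pi_h\bs\|_{L^2_{a^{-1}}}<\infty$).

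First I would subtract \eqref{mixedfem} from \eqref{mf}, the latter restricted to the admissible discrete test spaces ${\pmb S}_{h,N}\subset\hdivN$ and $V_h\subset\L2$, to get the error equations
$$
\int_{\mathcal D} a^{-1}(\bs-\bs_h)\cdot\bt \;-\; \int_{\mathcal D}(u-u_h)\,\mbox{div\,}\bt \;=\; 0 \qquad \forall\,\bt\in{\pmb S}_{h,N},
$$
$$
\int_{\mathcal D} v\,\mbox{div\,}(\bs-\bs_h) \;=\; 0 \qquad \forall\, v\in V_h .
$$
Next I would show that $\mbox{div\,}(\Pi_h\bs-\bs_h)=0$: by \eqref{discrete Neumann} the normal traces of $\bs_h$ and $\Pi_h\bs$ coincide on $\Gamma_N$, hence $\Pi_h\bs-\bs_h\in{\pmb S}_{h,N}$ and, by \eqref{div S=V}, $\mbox{div\,}(\Pi_h\bs-\bs_h)\in V_h$; on the other hand the second error equation says $\mbox{div\,}(\bs-\bs_h)\perp V_h$, while the commutative diagram \eqref{propiedad conmutativa} gives $\mbox{div\,}(\bs-\Pi_h\bs)=\mbox{div\,}\bs-P_h\mbox{div\,}\bs\perp V_h$; subtracting, $\mbox{div\,}(\Pi_h\bs-\bs_h)$ is an element of $V_h$ orthogonal to $V_h$, so it vanishes.

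Then I would take $\bt=\Pi_h\bs-\bs_h$ in the first error equation; since its divergence is zero the term containing $u-u_h$ drops and we obtain the Galerkin orthogonality $\int_{\mathcal D} a^{-1}(\bs-\bs_h)\cdot(\Pi_h\bs-\bs_h)=0$. Writing $\bs-\bs_h=(\bs-\Pi_h\bs)+(\Pi_h\bs-\bs_h)$ in one factor of $\|\bs-\bs_h\|_{L^2_{a^{-1}}}^2=\int_{\mathcal D} a^{-1}(\bs-\bs_h)\cdot(\bs-\bs_h)$ and using this orthogonality yields $\|\bs-\bs_h\|_{L^2_{a^{-1}}}^2=\int_{\mathcal D} a^{-1}(\bs-\bs_h)\cdot(\bs-\Pi_h\bs)$, and the Cauchy–Schwarz inequality for the weighted inner product gives the stated bound after dividing by $\|\bs-\bs_h\|_{L^2_{a^{-1}}}$ (the case $\bs=\bs_h$ being trivial).

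I do not expect a genuinely hard step: this is the textbook estimate adapted to the degenerate setting, and the points needing a little care are purely formal — that the discrete spaces are legitimate test functions in the continuous formulation \eqref{mf}, that $\Pi_h\bs-\bs_h$ really lies in ${\pmb S}_{h,N}$ (which is where the matching of Neumann data in \eqref{discrete Neumann} enters), and, in the pure Neumann case $\Gamma_N=\partial{\mathcal D}$, that $V_h\subset L^2_0({\mathcal D})$ together with the compatibility condition on $g$ keep $P_h g$ and the scheme well posed. The weighted nature of the norms is harmless precisely because $a\in A_2$ forces $a^{-1}\in L^1_{loc}$, which is exactly what was used to set up \eqref{mf} and \eqref{mixedfem} in the first place.
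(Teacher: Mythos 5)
Your proposal is correct and follows essentially the same route as the paper: show $\Pi_h\bs-\bs_h\in{\pmb S}_{h,N}$ via \eqref{discrete Neumann}, deduce $\mbox{div\,}(\Pi_h\bs-\bs_h)=0$ from \eqref{div S=V} and the orthogonality of $\mbox{div\,}(\bs-\bs_h)$ and $\mbox{div\,}(\bs-\Pi_h\bs)$ to $V_h$, then test with $\Pi_h\bs-\bs_h$ to get the weighted Galerkin orthogonality and conclude by Cauchy--Schwarz. The only cosmetic difference is that the paper invokes \eqref{prop fundamental} directly where you use the equivalent commutative diagram \eqref{propiedad conmutativa}.
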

\begin{proof} Subtracting the second equation
in (\ref{mixedfem}) to the second one in (\ref{mf}) and using
(\ref{prop fundamental}) we obtain
$$
\int_{{\mathcal D}}\mbox{div\,}(\Pi_h\bs - \bs_h)\, v
=0  \ \ \forall v\in V_h .
$$
From (\ref{discrete Neumann})
it follows that $\Pi_h\bs - \bs_h\in{\pmb S}_{h,N}$, and then, by \eqref{div S=V}
we conclude that $\mbox{div\,}(\Pi_h\bs - \bs_h)=0$.
Moreover, taking ${\bt}=\Pi_h{\bs} - {\bs}_h$ in (\ref{mf}) and (\ref{mixedfem}),
we obtain
$$
\int_{{\mathcal D}} a^{-1}\, (\bs-\bs_h) \cdot(\Pi_h \bs-\bs_h)=0
$$
and so,
$$
\begin{aligned}
\|\bs-{\bs}_h\|_{L^2_{{a^{-1}}}}^2
&= \int_{{\mathcal D}} a^{-1}\, (\bs-\bs_h)\cdot
(\bs-\Pi_h{\bs}) \\
&\le \|\bs-\bs_h\|_{L^2_{a^{-1}}} \|\bs-\Pi_h\bs\|_{L^2_{a^{-1}}},
\end{aligned}
$$
and the lemma is proved.\end{proof}

To estimate the error in the approximation of the scalar variable
we need a stronger assumption on the coefficient $a$. Indeed, we will prove
the following result that generalizes to the weighted case
the existence of continuous right inverses of the divergence.

\begin{lemma}
\label{inversa de la div}
If $a\in A_2$ then, given $\phi\in L^2_{a^{-1}}({\mathcal D})$
(satisfying $\int_{\mathcal D}\phi=0$ in the case $\Gamma_N=\partial{\mathcal D}$),
there exists $\bt\in H_{a^{-1}}^1({\mathcal D})^n\cap\hdivN$ such that
$$
\mbox{div\,}\bt=\phi
$$
and
$$
\|\bt\|_{H_{a^{-1}}^1({\mathcal D})}\le C \|\phi\|_{L_{a^{-1}}^2({\mathcal D})},
$$
where the constant $C$ depends on ${\mathcal D}$ and $a$.
\end{lemma}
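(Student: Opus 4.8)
The plan is to reduce the weighted statement to the classical (unweighted) existence of a right inverse of the divergence on the Lipschitz domain ${\mathcal D}$, transferred through the $A_2$ weight via weighted norm inequalities. First I would treat the pure Neumann case $\Gamma_N = \partial{\mathcal D}$ and the mixed case separately only at the level of boundary conditions; the core construction is the same. Given $\phi \in L^2_{a^{-1}}({\mathcal D})$, I would like to produce a vector field $\bt$ with $\mbox{div\,}\bt = \phi$ and control of $\|\bt\|_{L^2_{a^{-1}}}$ and $\|\nabla \bt\|_{L^2_{a^{-1}}}$. Two routes are natural: (i) solve an auxiliary degenerate elliptic (or Stokes-type) problem with weight $a^{-1}$, using that $a^{-1} \in A_2$ whenever $a \in A_2$, and read off $\bt$ from the solution; or (ii) invoke a Bogovski\u\i-type operator and prove it is bounded on $A_2$-weighted Sobolev spaces. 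I would pursue route (ii) as the main line, since the weighted boundedness of the Bogovski\u\i\ operator on star-shaped domains (and hence, by a partition into star-shaped pieces, on bounded Lipschitz domains) is exactly the kind of $A_p$-weighted singular/fractional integral estimate that follows from Calder\'on--Zygmund theory with Muckenhoupt weights.

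The key steps, in order, would be: (1) Observe that $a \in A_2 \iff a^{-1} \in A_2$, with $[a^{-1}]_{A_2} = [a]_{A_2}$, so $w := a^{-1}$ is an admissible $A_2$ weight. (2) Recall/construct the Bogovski\u\i\ operator on a domain ${\mathcal D}$ that is a finite union of domains star-shaped with respect to balls: for $\phi$ with zero mean it gives $\bt$ with $\mbox{div\,}\bt = \phi$ and $\bt$ vanishing on $\partial{\mathcal D}$, represented by an integral operator whose kernel has the structure of a Calder\'on--Zygmund kernel plus a lower-order term, so that $\partial_j \bt_i$ is given by a singular integral operator (of convolution-type modulo smooth corrections) applied to $\phi$. (3) Apply the weighted Calder\'on--Zygmund estimate: such operators are bounded $L^2_w \to L^2_w$ for $w \in A_2$, giving $\|\nabla \bt\|_{L^2_w} \le C(\,{\mathcal D}, [w]_{A_2}) \|\phi\|_{L^2_w}$; then control $\|\bt\|_{L^2_w}$ either by a weighted Poincar\'e inequality (valid since $\bt = 0$ on $\partial{\mathcal D}$ and $w \in A_2$) or directly from the (more regular, non-singular) part of the Bogovski\u\i\ kernel. (4) Handle the boundary conditions: in the mixed case, since $\bt \equiv 0$ on all of $\partial{\mathcal D}$ from the Bogovski\u\i\ construction, in particular $\bt\cdot\bn = 0$ on $\Gamma_N$, so $\bt \in \hdivN$; in the pure Neumann case the zero-mean hypothesis on $\phi$ is precisely what the Bogovski\u\i\ operator requires, and again $\bt\cdot\bn = 0$ on $\partial{\mathcal D}$. (5) Collect the estimates into $\|\bt\|_{H^1_{a^{-1}}} \le C\|\phi\|_{L^2_{a^{-1}}}$ with $C = C({\mathcal D}, a)$.

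The main obstacle is step (3): making precise that the gradient of the Bogovski\u\i\ potential is a genuine Calder\'on--Zygmund operator (with sufficiently nice kernel, including the non-convolution corrections coming from the cutoff in the star-shaped representation) and that weighted $L^2$ boundedness with $A_2$ weights applies to it uniformly. On a single star-shaped-with-respect-to-a-ball domain this is classical; the work is in the bookkeeping of the decomposition of a Lipschitz polytope into finitely many such pieces and in verifying that the auxiliary cutoff functions used to glue the local solutions do not destroy the weighted bounds (they only add terms that are bounded by weighted Poincar\'e, which holds for $A_2$ weights). An alternative, if one prefers to avoid the explicit kernel analysis, is route (i): solve $\mbox{div\,}(w \nabla \psi) = \phi$ with the degenerate-elliptic theory of Fabes--Kenig--Serapioni for $w = a^{-1} \in A_2$ (natural-boundary-condition in the mixed case, zero-mean compatibility in the pure Neumann case), set $\bt = w\nabla\psi$, check $\mbox{div\,}\bt = \phi$ directly, and obtain $\|\bt\|_{L^2_{a^{-1}}} = \|\nabla\psi\|_{L^2_a}^{?}$— but this route gives $\bt \in \hdiv$ easily while the full $H^1_{a^{-1}}$ regularity of $\bt$ requires weighted elliptic regularity for $\psi$, which is itself delicate; so I expect the Bogovski\u\i\ approach to be the cleaner one to push through.
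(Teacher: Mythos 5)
Your treatment of the case $\Gamma_N=\partial{\mathcal D}$ is essentially the paper's: there the result is taken from the literature (Bogovski\u\i's operator plus weighted Calder\'on--Zygmund theory for $A_2$ weights, as in Dur\'an--L\'opez Garc\'ia and Schumacher, extended to John domains), and your sketch of why the gradient of the Bogovski\u\i\ potential is bounded on $L^2_w$ for $w\in A_2$ is the right mechanism. The problem is your step (4) for the mixed case $\Gamma_N\neq\partial{\mathcal D}$. There the lemma imposes \emph{no} zero-mean condition on $\phi$, so you cannot feed $\phi$ to the Bogovski\u\i\ operator on ${\mathcal D}$ at all: that operator requires $\int_{\mathcal D}\phi=0$ (for general $\phi$ it returns a field whose divergence is $\phi$ minus a multiple of a fixed bump, not $\phi$). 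Worse, the field you claim to obtain, vanishing on all of $\partial{\mathcal D}$, cannot exist: by the divergence theorem it would force
$$
\int_{\mathcal D}\phi=\int_{\mathcal D}\mbox{div\,}\bt=\int_{\partial{\mathcal D}}\bt\cdot\bn=0,
$$
which fails for generic $\phi$. So the mixed case is genuinely missing from your argument, and it is the case actually needed in Lemma \ref{error u} when $\Gamma_D\neq\emptyset$.

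The paper's fix is a domain-enlargement trick you should add. Since only $\bt\cdot\bn=0$ on $\Gamma_N$ is required (the normal trace on $\Gamma_D$ is free), one enlarges ${\mathcal D}$ across $\Gamma_D$ to a Lipschitz domain $\tO\supsetneq{\mathcal D}$ with $\Gamma_N\subset\partial\tO$, extends $\phi$ to $\tO$ by the constant $-\int_{\mathcal D}\phi/|\tO\setminus{\mathcal D}|$ on $\tO\setminus{\mathcal D}$ so that the extension $\tphi$ has zero mean on $\tO$ and satisfies $\|\tphi\|_{L^2_{a^{-1}}(\tO)}\le C\|\phi\|_{L^2_{a^{-1}}({\mathcal D})}$, applies the zero-mean result on $\tO$ to get a field vanishing on $\partial\tO$, and restricts it to ${\mathcal D}$; the restriction lies in $\hdivN$ because it vanishes on $\Gamma_N\subset\partial\tO$. (A minor technical point your extension argument would also need: $a$ must be an $A_2$ weight on the larger domain, which is why $A_2$ is defined on all of ${\mathbb R}^n$.) With this addition your proof matches the paper's; without it, the statement you prove is strictly weaker than the lemma.
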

\begin{proof}
In the case $\Gamma_N=\partial{\mathcal D}$ we have $\int_{\mathcal D}\phi=0$ and the
result is known. Indeed, for domains which are star-shaped with
respect to a ball it was proved
in \cite[Th. 3.1]{DL} and \cite[Th.1.1]{Sch} using Bogovskii's solution of the divergence
and the theory of singular integrals. The arguments used there can be extended
for the class of John domains using
the generalization of Bogovskii's operator introduced in \cite{ADM} (For more details see also \cite{AD2}).
A different proof was given in \cite[Th. 5.2]{DRS} also for the class
of John domains.

Suppose now that $\Gamma_N\neq\partial{\mathcal D}$. Enlarging the domain in
an appropriate way we can obtain a Lipschitz domain $\tO$
such that ${\mathcal D}\varsubsetneq\widetilde{\mathcal D}$ and
$\Gamma_N\subset\partial\tO$. For example, we can make a smooth deformation
of part of $\Gamma_D$.

Now, we extend $\phi$ to $\tO$ as
$$
\tphi=
\left\{
\begin{aligned}
\phi(x) &\ \ ,\ \ x\in{\mathcal D}\\
-\frac{\int_{\mathcal D}\phi}{|\tO\setminus{\mathcal D}|} &\ \ ,\ \ x\in\tO\setminus{\mathcal D}
\end{aligned}
\right.
$$
and then, since $\int_{\tO}\tphi\,dx=0$, there exists $\bt\in H_{a^{-1}}^1(\tO)^n$,
vanishing on $\partial\tO$
and satisfying
$$
\|\bt\|_{H_{a^{-1}}^1(\tO)}\le C \|\tphi\|_{L_{a^{-1}}^2(\tO)}
$$
It is easy to see that $\|\tphi\|_{L_{a^{-1}}^2(\tO)}\le C \|\phi\|_{L_{a^{-1}}^2({\mathcal D})}$,
and, therefore, the restriction of $\bt$ to ${\mathcal D}$ satisfies the required properties.
\end{proof}

For the next lemma we need to use the following stability result in a weighted norm:
\begin{equation}
\label{prop4}
\|\Pi_h\bt\|_{L^2_{a^{-1}}}\le C \|\bt\|_{H_{a^{-1}}^1}.
\end{equation}
Assuming that
$a\in A_2$, we will
prove this estimate for the lowest order Raviart-Thomas spaces
in a forthcoming section.

\begin{lemma}
\label{error u}
Let $(\bs,u)$ and $(\bs_h,u_h)$ be the solutions of (\ref{Neumann}) and (\ref{mf}),
and (\ref{discrete Neumann}) and (\ref{mixedfem}) respectively. If $a\in A_2$
and $\Pi_h$ satisfies \eqref{prop4} then
\begin{equation}
\label{error u 2}
\|u-u_h\|_{L_a^2}
\le \|u-P_hu\|_{L_a^2}
+C\|\bs-\bs_h\|_{L_{a^{-1}}^2},
\end{equation}
where $C$ depends on the constant in Lemma \ref{inversa de la div}.
\label{lemacotap}
\end{lemma}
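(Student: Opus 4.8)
The plan is to use a standard Fortin/inf-sup type argument adapted to the weighted norms. First I would note that since $P_h u - u_h \in V_h \subset L^2(\mathcal D)$ (with vanishing mean in the case $\Gamma_N = \partial \mathcal D$), Lemma \ref{inversa de la div} applied to $\phi = P_h u - u_h$ gives a field $\bt \in H^1_{a^{-1}}(\mathcal D)^n \cap \hdivN$ with $\mbox{div\,}\bt = P_h u - u_h$ and $\|\bt\|_{H^1_{a^{-1}}} \le C\|P_h u - u_h\|_{L^2_{a^{-1}}}$. A key observation is that $P_h u - u_h \in L^2_a$ automatically (it is a finite element function, hence bounded on each element, and $a, a^{-1} \in L^1_{loc}$ since $a \in A_2$), so the quantity we want to bound, $\|u - u_h\|_{L^2_a} \le \|u - P_h u\|_{L^2_a} + \|P_h u - u_h\|_{L^2_a}$, reduces by the triangle inequality to estimating $\|P_h u - u_h\|_{L^2_a}$.

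Next I would use the commutative diagram property \eqref{propiedad conmutativa}: since $\bt \in \hdivN$ lies in the domain of $\Pi_h$ and $\mbox{div\,}\Pi_h\bt = P_h\mbox{div\,}\bt = P_h(P_h u - u_h) = P_h u - u_h$, the field $\Pi_h\bt \in {\pmb S}_{h,N}$ is an admissible test function in both \eqref{mf} (second equation) and \eqref{mixedfem} (first equation). Testing the first equation of \eqref{mf} and of \eqref{mixedfem} with $\bt_h := \Pi_h\bt$ and subtracting yields
\[
\int_{\mathcal D} a^{-1}(\bs - \bs_h)\cdot \Pi_h\bt - \int_{\mathcal D}(u - u_h)\,\mbox{div\,}\Pi_h\bt = 0,
\]
hence $\int_{\mathcal D}(u-u_h)(P_h u - u_h) = \int_{\mathcal D} a^{-1}(\bs - \bs_h)\cdot\Pi_h\bt$. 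On the left-hand side, since $P_h u - u_h \in V_h$ and $P_h$ is the $L^2$-orthogonal projection, $\int_{\mathcal D}(u - P_h u)(P_h u - u_h) = 0$, so the left side equals $\|P_h u - u_h\|^2_{L^2(\mathcal D)}$... but I actually want the $L^2_a$ norm, so instead I would choose $\phi$ more carefully: apply Lemma \ref{inversa de la div} not to $P_h u - u_h$ directly but use a duality/scaling so that the left side produces $\|P_h u - u_h\|^2_{L^2_a}$. The cleanest route is to take $\phi$ with $\mbox{div\,}\bt$ chosen so that $\int(u-u_h)\mbox{div\,}\Pi_h\bt = \|P_h u - u_h\|_{L^2_a}^2$; concretely, one replaces $P_hu - u_h$ as the target divergence by the $V_h$-function representing the functional $v \mapsto \int_{\mathcal D} a (P_h u - u_h) v$, i.e. one works with the weighted $L^2_a$ pairing on $V_h$ throughout. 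Either way, combining with the Cauchy–Schwarz inequality and \eqref{prop4},
\[
\|P_h u - u_h\|_{L^2_a}^2 \le \|\bs - \bs_h\|_{L^2_{a^{-1}}}\,\|\Pi_h\bt\|_{L^2_{a^{-1}}} \le C\|\bs - \bs_h\|_{L^2_{a^{-1}}}\,\|\bt\|_{H^1_{a^{-1}}} \le C\|\bs - \bs_h\|_{L^2_{a^{-1}}}\,\|P_h u - u_h\|_{L^2_a},
\]
and dividing gives $\|P_h u - u_h\|_{L^2_a} \le C\|\bs - \bs_h\|_{L^2_{a^{-1}}}$, which with the triangle inequality above yields \eqref{error u 2}.

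The main obstacle I anticipate is bookkeeping the two distinct weighted norms consistently: the right inverse of the divergence in Lemma \ref{inversa de la div} maps $L^2_{a^{-1}}$ data to $H^1_{a^{-1}}$ fields, the stability \eqref{prop4} is stated with $L^2_{a^{-1}}$ on the left and $H^1_{a^{-1}}$ on the right, while the quantity to be estimated lives in $L^2_a$. Reconciling these requires noticing that the duality $V_h \times V_h \to \mathbb R$ given by $(v,w)\mapsto \int_{\mathcal D} a\, v\, w$ identifies $\|P_h u - u_h\|_{L^2_a}$ with the $L^2$-pairing against $a(P_h u - u_h)$, and that $a(P_h u - u_h) \in L^2_{a^{-1}}$ with $\|a(P_h u - u_h)\|_{L^2_{a^{-1}}} = \|P_h u - u_h\|_{L^2_a}$ — this is the identity that makes Lemma \ref{inversa de la div} applicable with the right norm on the data side and closes the circle. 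Everything else — admissibility of $\Pi_h\bt$ as a test function, the orthogonality of $P_h$, and the Cauchy–Schwarz step — is routine.
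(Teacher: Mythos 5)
Your core argument is the same as the paper's: apply Lemma \ref{inversa de la div} to the datum $\phi=a(P_hu-u_h)$, use the identity $\|a(P_hu-u_h)\|_{L^2_{a^{-1}}}=\|P_hu-u_h\|_{L^2_a}$ to get $\|\bt\|_{H^1_{a^{-1}}}\le C\|P_hu-u_h\|_{L^2_a}$, pass to $\Pi_h\bt$ via \eqref{prop fundamental} and the $L^2$-orthogonality of $P_h$, invoke the error equation and \eqref{prop4}, and divide. That chain is correct and closes exactly as in the paper when $\Gamma_D\neq\emptyset$. Two remarks, one of which is a genuine (if minor) gap.

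First, of the two alternatives you float for the divergence datum, only the direct choice $\phi=a(P_hu-u_h)$ works cleanly. If you instead took the $V_h$-Riesz representative $w_h$ of $v\mapsto\int_{\mathcal D}a(P_hu-u_h)v$, the left-hand side would still produce $\|P_hu-u_h\|^2_{L^2_a}$, but Lemma \ref{inversa de la div} would bound $\|\bt\|_{H^1_{a^{-1}}}$ by $\|w_h\|_{L^2_{a^{-1}}}$, and you would then need a weighted stability estimate $\|w_h\|_{L^2_{a^{-1}}}\le C\|P_hu-u_h\|_{L^2_a}$ that is not available; so commit to the direct choice. Second, and this is the actual gap: in the case $\Gamma_N=\partial{\mathcal D}$, Lemma \ref{inversa de la div} requires $\int_{\mathcal D}\phi=0$, and while $P_hu-u_h$ has vanishing mean (it lies in $V_h\subset L^2_0$), the weighted function $a(P_hu-u_h)$ does not in general. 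You must take $\phi=a(P_hu-u_h)-\overline{a(P_hu-u_h)}$; the subtracted constant is harmless in the pairing $\int_{\mathcal D}(P_hu-u_h)\,\mbox{div\,}\bt$ precisely because $\int_{\mathcal D}(P_hu-u_h)=0$, and its contribution to $\|\phi\|_{L^2_{a^{-1}}}$ is controlled by $\|P_hu-u_h\|_{L^2_a}$ via Cauchy--Schwarz, but this step has to be said and is absent from your proposal.
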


\begin{proof} Assume first that $\Gamma_D\neq\emptyset$.
According to Lemma \ref{inversa de la div}
there exists $\bt\in H_{a^{-1}}^1({\mathcal D})^n\cap\hdivN$ such that
$$
\mbox{div\,}\bt=(P_hu-u_h)a
$$
and
$$
\|\bt\|_{H_{a^{-1}}^1}\le C \|(P_hu-u_h)a\|_{L_{a^{-1}}^2}.
$$
Then,
$$
\begin{aligned}
\|P_hu-u_h\|^2_{L_a^2}
&=\int_{\mathcal D} (P_hu-u_h) \mbox{div\,}\bt
=\int_{\mathcal D} (P_hu-u_h) \mbox{div\,}\Pi_h\bt\\
&=\int_{\mathcal D} (u-u_h) \mbox{div\,}\Pi_h\bt
=\int_{\mathcal D}{a^{-1}} (\bs-\bs_h) \cdot\Pi_h\bt
\\
&\le \|\bs-\bs_h\|_{L^2_{a^{-1}}} \|\Pi_h\bt\|_{L^2_{a^{-1}}}\\
&\le C\|\bs-\bs_h\|_{L^2_{a^{-1}}} \|\bt\|_{H^1_{a^{-1}}}
\\
&\le C\|\bs-\bs_h\|_{L^2_{a^{-1}}}
\|P_hu-u_h\|_{L_a^2}
\end{aligned}
$$
where we have used \eqref{prop fundamental} and (\ref{prop4}). Then,
\eqref{error u 2} follows by the triangular inequality.

Now, if $\Gamma_N=\partial{\mathcal D}$, there exists $\bt\in\hdivN$ such that
$$
\mbox{div\,}\bt=(P_hu-u_h)a -  \overline{(P_hu-u_h)a},
$$
where $\overline{(P_hu-u_h)a}$ denotes the average of
$(P_hu-u_h)a$, and
$$
\|\bt\|_{H_{a^{-1}}^1}\le C \|(P_hu-u_h)a\|_{L_{a^{-1}}^2}.
$$
Indeed, this follows from Lemma \ref{inversa de la div} and the estimate
$$
|\overline{(P_hu-u_h)a}|
\le \frac1{|{\mathcal D}|}\left(\int_{\mathcal D}a\right)^{1/2}
\|(P_hu-u_h)a\|_{L_{a^{-1}}^2}.
$$
Since $\int_{\mathcal D} (P_hu-u_h)=0$ we have
$$
\begin{aligned}
\|P_hu-u_h\|^2_{L_a^2({\mathcal D})}
&=\int_{\mathcal D} (P_hu-u_h)\left((P_hu-u_h)a -  \overline{(P_hu-u_h)a}\right)\\
&=\int_{\mathcal D} (P_hu-u_h) \mbox{div\,}\bt.
\end{aligned}
$$
The rest of the argument follows as in the previous case.
\end{proof}
Combining Lemmas \ref{error sigma} and \ref{error u} we obtain the following
\begin{corollary}
Under the same hypotheses of Lemma \ref{error u} we have
$$
\|u- u_h\|_{L^2_{a}} \le  \|u-P_hu\|_{L_a^2}
+ C\|\bs-\Pi_h\bs\|_{L_{a^{-1}}^2}.
$$
\end{corollary}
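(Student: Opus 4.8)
The plan is to combine the two error estimates already proved. First I would invoke Lemma~\ref{error u} to obtain
$$
\|u-u_h\|_{L_a^2} \le \|u-P_hu\|_{L_a^2} + C\|\bs-\bs_h\|_{L_{a^{-1}}^2}.
$$
Then I would control the term $\|\bs-\bs_h\|_{L_{a^{-1}}^2}$ using Lemma~\ref{error sigma}, which gives $\|\bs-\bs_h\|_{L_{a^{-1}}^2}\le\|\bs-\Pi_h\bs\|_{L_{a^{-1}}^2}$. Substituting this bound into the previous inequality yields
$$
\|u-u_h\|_{L_a^2} \le \|u-P_hu\|_{L_a^2} + C\|\bs-\Pi_h\bs\|_{L_{a^{-1}}^2},
$$
which is exactly the claimed estimate.

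There is essentially no obstacle here: the corollary is a one-line consequence of chaining the two lemmas, so the entire proof amounts to the substitution above. I would only note that the hypotheses of Lemma~\ref{error u} (namely $a\in A_2$ and the stability property \eqref{prop4} for $\Pi_h$) already imply $a^{-1}\in L^1({\mathcal D})$ and thus subsume the hypotheses of Lemma~\ref{error sigma}, so both lemmas are applicable under the stated assumptions, and the constant $C$ is the one from Lemma~\ref{error u}, depending on the constant in Lemma~\ref{inversa de la div}.
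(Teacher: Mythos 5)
Your proof is correct and is exactly the paper's argument: the paper states the corollary as the result of "combining Lemmas \ref{error sigma} and \ref{error u}", i.e., substituting the bound $\|\bs-\bs_h\|_{L^2_{a^{-1}}}\le\|\bs-\Pi_h\bs\|_{L^2_{a^{-1}}}$ into the estimate of Lemma \ref{error u}. Your remark that the hypotheses of Lemma \ref{error u} subsume those of Lemma \ref{error sigma} is a sensible (and accurate) extra check.
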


\section{Error estimates for Raviart-Thomas elements}
\label{RT}
\setcounter{equation}{0}

We now consider the approximation by the lowest order Raviart-Thomas
mixed finite elements. To apply the results obtained in the previous section
we have to prove error estimates for the corresponding operators
$\Pi_h$ and $P_h$.

Recall that the local Raviart-Thomas space of lowest degree
for a simplex $T$ is
$$
\RT_0(T)=\{\bt\,:\,
\bt(x)=(a_1+bx_1,\cdots,a_n+bx_n)\ \ \mbox{with} \ \ a_i,b\in{\mathbb R}\}
$$
while for $R$ an $n$-dimensional rectangular element with faces parallel to
the coordinate axes, is
$$
\RT_0(R)=\{\bt\,:\,
\bt(x)=(a_1+b_1x_1,\cdots,a_n+b_nx_n)\ \ \mbox{with} \ \ a_i,b_i\in{\mathbb R}\}.
$$
Then, the global space for the mixed approximation of the vector variable
for a partition $\cth$ made of any kind of elements is
\begin{equation}
\label{Raviart-Thomas}
{\pmb S}_h=\{\bt\in \hdiv \colon \bt|_K \in \RT_0(K)
\ \ \forall K\in\cth\}.
\end{equation}
The associated space for the scalar variable is given by the piecewise constant functions, namely,
\begin{equation}
\label{Constantes a trozos}
V_h=\{v\in V \colon v|_K \in \Cp_0(K)
\ \ \forall K\in\cth\},
\end{equation}
where $V=L^2_0({\mathcal D})$ when $\Gamma_N=\partial{\mathcal D}$ or
$V=\L2$ otherwise. Then, the projection $P_h$ is given by $(P_hv)|_K=P_Kv:=\frac1{|K|}\int_Kv$.
The fundamental tool for the error analysis is the well known
Raviart-Thomas operator  $\Pi_h$ defined on each element $K$ as $(\Pi_h\bt)|_K=\Pi_K\bt$ where
\begin{equation}
\label{def de pi}
\int_F \Pi_K\bt\cdot\bn_F =\int_F \bt\cdot\bn_F
\end{equation}
for all face $F$ of $K$ where $\bn_F$ denotes a unitary vector normal to $F$ (here $K$ may be a simplex
or a rectangle). This operator is well
defined whenever the $\bt\cdot\bn\in L^1(F)$ which is known to be true for any
$\bt\in W^{1,1}(K)^n$. Moreover, it is not difficult to check that \eqref{div S=V},
\eqref{prop fundamental}, and consequently \eqref{propiedad conmutativa}, are satisfied.

We consider first the case of regular partitions, namely, if $h_K$ and $\rho_K$ are the diameters
of $K$ and the biggest ball contained in $K$ respectively, we assume that the family of meshes
$\{\cth\}$ satisfy $h_K/\rho_K\le\eta$ with a constant $\eta$ independent of $h$.

Basic tools for interpolation error estimates are the
Poincar\'e type inequalities. Given a set $S$ and a function $v$ we will denote with $v_S$
the average of $v$ over $S$ (both for $S=K$ or some face of $K$). In what follows the
constant $C$ will depend on the weight $[a]_{A_2}$, although it is possible to give an
explicit bound for this dependence this is not of interest for our purposes because we will
work with a fixed $a$. Let us also remark that the arguments given below can be applied to
obtain analogous interpolation error estimates in $L_a^p(K)$, $1\le p<\infty$,
provided $a\in A_p$ (see, for example, \cite{D} for the definition of these classes).

To simplify notation we will prove all the estimates for the weight $a$
although some of them will be used later for $a^{-1}$. Note that, from the definition of $A_2$, it follows immediately that $a\in A_2$ if and only if $a^{-1}\in A_2$.

In what follows we will use the following observation: under the regularity assumption
it is easy to see that
\begin{equation}
\label{A2 en K}
\left(\frac{1}{|K|}\int_K a\right) \left(\frac{1}{|K|}\int_K a^{-1}\right)\le C [a]_{A_2}
\end{equation}
with $C$ depending only on $n$ and $\eta$.

\begin{lemma} For $a \in A_2$ there exists a constant
depending only on $a$ and $\eta$, such that,
\begin{equation}
\label{cota Phu}
\|P_Ku\|_{L^2_{a}(K)} \le C\|u\|_{L^2_{a}(K)}
\end{equation}
and
\begin{equation}
\label{poincare clasica}
\|u-P_Ku\|_{L^2_{a}(K)} \le C h_K\|\nabla u\|_{L^2_{a}(K)}.
\end{equation}
\end{lemma}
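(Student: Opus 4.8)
The plan is to prove both estimates by reducing to a reference element and exploiting the local $A_2$ property \eqref{A2 en K}.

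First I would establish \eqref{cota Phu}. Since $P_Ku$ is the constant $u_K=\frac{1}{|K|}\int_K u$, we have $\|P_Ku\|_{L^2_a(K)}^2 = |u_K|^2\int_K a$. By Cauchy--Schwarz, $|u_K|^2 \le \frac{1}{|K|^2}\left(\int_K u^2 a\right)\left(\int_K a^{-1}\right)$, so that
\begin{equation*}
\|P_Ku\|_{L^2_a(K)}^2 \le \left(\frac{1}{|K|}\int_K a\right)\left(\frac{1}{|K|}\int_K a^{-1}\right)\|u\|_{L^2_a(K)}^2 \le C[a]_{A_2}\|u\|_{L^2_a(K)}^2,
\end{equation*}
using \eqref{A2 en K} in the last step. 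This gives \eqref{cota Phu} with a constant depending only on $n$, $\eta$ and $[a]_{A_2}$.

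For \eqref{poincare clasica}, the plan is to use a weighted Poincar\'e inequality on the element $K$. One clean route is to invoke a known weighted Poincar\'e inequality of the form $\|u - u_K\|_{L^2_a(K)} \le C h_K \|\nabla u\|_{L^2_a(K)}$, valid for $a \in A_2$ (or more precisely $A_2$ on the relevant class of convex/star-shaped domains) with constant controlled by $[a]_{A_2}$ and the chunkiness parameter $\eta$; such inequalities appear in the literature on weighted Sobolev spaces (e.g. Fabes--Kenig--Serapioni type results). Alternatively, I would work directly: use the representation $u(x) - u_K = \frac{1}{|K|}\int_K (u(x)-u(y))\,dy$ and the integral formula along segments $u(x)-u(y) = \int_0^1 \nabla u(y+t(x-y))\cdot(x-y)\,dt$, leading to a bound of $|u(x)-u_K|$ by a Riesz-potential-type operator $I_1(|\nabla u|\chi_K)(x)$ applied to $|\nabla u|$ (this is the standard proof of Poincar\'e, using convexity of $K$ so segments stay inside). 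Then the needed inequality is the boundedness of the fractional integral $I_1$ (suitably localized to $K$, giving the factor $h_K$) from $L^2_a(K)$ to $L^2_a(K)$, which holds for $a\in A_2$ by the weighted theory of fractional integrals. Finally, since $\|u - P_Ku\|_{L^2_a(K)} \le \|u - c\|_{L^2_a(K)}$ for the best constant $c$ in $L^2_a$, and $u_K$ is comparable to that best constant (or one simply applies the triangle inequality together with \eqref{cota Phu}), one obtains \eqref{poincare clasica}.

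The main obstacle is the weighted Poincar\'e inequality itself: unlike the unweighted case, one cannot simply scale to a reference element because the $A_2$ constant is scale-invariant but the geometry distortion under affine maps interacts with the weight. The cleanest way around this is to cite an existing weighted Poincar\'e estimate on convex domains (or star-shaped-with-respect-to-a-ball domains, which all regular simplices and rectangles are, uniformly in $h$), with the constant depending only on $[a]_{A_2}$, the dimension, and the chunkiness $\eta$ — then the factor $h_K$ comes out by the homogeneity of both sides under dilation, since dilating does not change $[a]_{A_2}$. I would therefore structure the proof so that \eqref{cota Phu} is self-contained via Cauchy--Schwarz and \eqref{A2 en K}, and \eqref{poincare clasica} rests on the weighted Poincar\'e inequality together with \eqref{cota Phu} and the triangle inequality.
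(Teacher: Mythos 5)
Your proof of \eqref{cota Phu} is exactly the paper's argument (Cauchy--Schwarz to bound $|u_K|$ by $\frac1{|K|}(\int_K u^2a)^{1/2}(\int_K a^{-1})^{1/2}$, then \eqref{A2 en K}), and for \eqref{poincare clasica} your primary route --- citing the weighted Poincar\'e inequality of Fabes--Kenig--Serapioni type on star-shaped domains and extracting the factor $h_K$ by pure dilation, under which $[a]_{A_2}$ is invariant and the chunkiness is preserved --- is precisely what the paper does, with your Riesz-potential/maximal-function sketch being a valid self-contained alternative. One small slip: the inequality $\|u-P_Ku\|_{L^2_a(K)}\le\|u-c\|_{L^2_a(K)}$ for the \emph{best} constant $c$ in $L^2_a$ goes the wrong way (since $P_Ku$ is the unweighted average it cannot beat the minimizer), but your parenthetical repair via the triangle inequality together with \eqref{cota Phu} is the correct one, so nothing is lost.
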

\begin{proof}
We have
$$
\begin{aligned}
\|P_Ku\|_{L^2_{a}(K)}&=\left|\frac1{|K|}\int_K u\right|\left(\int_K a\right)^{1/2}\\
&\le\left(\frac1{|K|}\int_K a\right)^{1/2}\left(\frac1{|K|}\int_K a^{-1}\right)^{1/2} \|u\|_{L_a^2(K)},
\end{aligned}
$$
where we have used the Schwarz inequality in the last step. Therefore, \eqref{cota Phu} follows from \eqref{A2 en K}.

On the other hand, \eqref{poincare clasica} is the well known weighted Poincar\'e
inequality.
It was first proved in \cite{FKS} for the case of a ball and extended for
very general domains in several papers (see, for example, \cite{Ch,DL,H-S}).
The dependence of the constant on $h_T$ can be obtain by usual scaling arguments.
\end{proof}

Observe that, since $a\in A_2$ then $a^{-1}\in L^1(K)$ and, therefore,
$$
\|v\|_{L^1(K)}\le \left(\int_K a^{-1}\right)^{1/2} \|v\|_{L_a^2(K)}.
$$
Consequently $H^1_a(K)\subset W^{1,1}(K)$ and, in particular, traces of functions
in $H^1_a(K)$ on a face $F$ are well defined and belong to $L^1(F)$.

Our error estimates are based on the following generalized Poincar\'e inequality.

\begin{lemma}
\label{poincare generalizada}
Given $a\in A_2$, a simplex or a rectangle $K$ and a face $F$ of $K$, there exists a constant $C$,
depending only on $a$ and the regularity constant $\eta$, such that
$$
\|v-v_F\|_{L^2_a(K)}
\le C h_K \|\nabla v\|_{L^2_a(K)}
$$
for all $v\in H^1_a(K)$.
\end{lemma}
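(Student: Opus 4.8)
The plan is to reduce the estimate with the face-average $v_F$ to the already-proved weighted Poincaré inequality \eqref{poincare clasica}, which involves the element-average $v_K$. By the triangle inequality,
$$
\|v-v_F\|_{L^2_a(K)}\le \|v-v_K\|_{L^2_a(K)}+\|v_K-v_F\|_{L^2_a(K)},
$$
so it suffices to control the second term, a constant function, by $C h_K\|\nabla v\|_{L^2_a(K)}$. Writing out the weighted norm of a constant, $\|v_K-v_F\|_{L^2_a(K)}=|v_K-v_F|\,(\int_K a)^{1/2}$, the whole problem becomes estimating the scalar $|v_K-v_F|$.

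The key step is a trace-type bound for $|v_K-v_F|$. I would estimate
$$
|v_K-v_F|=\left|\frac1{|F|}\int_F (v-v_K)\right|\le \frac1{|F|}\int_F |v-v_K|,
$$
and then bound $\int_F|v-v_K|$ by the $W^{1,1}(K)$-norm of $v-v_K$ via a standard (unweighted) trace inequality on $K$, giving $\int_F|v-v_K|\le C\big(h_K^{-1}\|v-v_K\|_{L^1(K)}+\|\nabla v\|_{L^1(K)}\big)$ after scaling, using the regularity of $K$ so that $|F|\sim h_K^{n-1}$ and $|K|\sim h_K^n$. The second piece is already in the good form once I pass from $L^1$ to $L^2_a$; the first piece is handled by \eqref{poincare clasica} in its $L^1$ form, namely $\|v-v_K\|_{L^1(K)}\le C h_K\|\nabla v\|_{L^1(K)}$ (equivalently, just estimate $\|v-v_K\|_{L^1(K)}$ directly by the $L^1$ Poincaré inequality on the convex set $K$). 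Either way one arrives at
$$
|v_K-v_F|\le \frac{C}{h_K^{n-1}}\,\|\nabla v\|_{L^1(K)}.
$$

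The final step converts $\|\nabla v\|_{L^1(K)}$ into the weighted $L^2_a$ norm by Cauchy–Schwarz with the weight:
$$
\|\nabla v\|_{L^1(K)}\le \left(\int_K a^{-1}\right)^{1/2}\|\nabla v\|_{L^2_a(K)}.
$$
Multiplying by $(\int_K a)^{1/2}$ and using the $A_2$ estimate on $K$, \eqref{A2 en K}, one gets
$$
\|v_K-v_F\|_{L^2_a(K)}=|v_K-v_F|\Big(\int_K a\Big)^{1/2}\le \frac{C}{h_K^{n-1}}\Big(\int_K a\Big)^{1/2}\Big(\int_K a^{-1}\Big)^{1/2}\|\nabla v\|_{L^2_a(K)}\le C\,\frac{|K|}{h_K^{n-1}}\,[a]_{A_2}^{1/2}\,\|\nabla v\|_{L^2_a(K)},
$$
and $|K|/h_K^{n-1}\le C h_K$ by regularity, which closes the argument together with \eqref{poincare clasica}.

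The main obstacle I anticipate is making the trace inequality step clean and scale-correctly: one wants a bound on $\int_F |w|$ in terms of $\|w\|_{L^1(K)}$ and $\|\nabla w\|_{L^1(K)}$ with the correct powers of $h_K$, valid uniformly over the (finitely many shapes of) reference simplices and the reference rectangle, and then transferred to $K$ by an affine change of variables controlled by the regularity constant $\eta$. This is standard but is the only place where the element geometry genuinely enters; everything else is triangle inequality, the $L^1$ Poincaré inequality on convex sets, and Cauchy–Schwarz against the weight. Note the weight plays no role until the very last conversion, which is why the inequality holds for all $a\in A_2$ with the stated dependence of $C$ on $[a]_{A_2}$ and $\eta$.
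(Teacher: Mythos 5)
Your proposal is correct and follows essentially the same route as the paper: reduce to bounding the constant $|v_K-v_F|$, apply the standard $L^1$ trace inequality on $K$ together with the $L^1$ Poincar\'e inequality to get $|v_K-v_F|\le C\,h_K|K|^{-1}\|\nabla v\|_{L^1(K)}$, and then convert to the weighted norm by Cauchy--Schwarz against $a$ and the local $A_2$ bound \eqref{A2 en K}. The only cosmetic difference is that you make the scalings $|F|\sim h_K^{n-1}$, $|K|\sim h_K^n$ explicit where the paper keeps the ratios $|F|/|K|$ and $h_K/|K|$.
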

\begin{proof} In view of \eqref{poincare clasica} it is enough to estimate
    $\|v_K-v_F\|_{L^2_a(K)}$.

As we have mentioned above $H^1_a(K)\subset W^{1,1}(K)$, in particular $v_F$ is well defined.
Writing now
$$
v_F - v_K = \frac1{|F|}\int_F (v-v_K),
$$
and using the classic trace theorem
$$
\|v-v_K\|_{L^1(F)}
\le C \frac{|F|}{|K|}
\Big\{\|v-v_K\|_{L^1(K)}+ h_K \|\nabla v\|_{L^1(K)}\Big\}
$$
combined with the classic Poincar\'e inequality in $L^1(K)$, we obtain
$$
|v_F - v_K|
\le C \frac{h_K}{|K|}\|\nabla v\|_{L^1(K)}
$$
and consequently,
$$
\begin{aligned}
\|v_K-v_F\|_{L^2_a(K)}
&\le \left(\int_K a\right)^{1/2} |v_K - v_F|
\le C\left(\int_K a\right)^{1/2}
\frac{h_K}{|K|}\|\nabla v\|_{L^1(K)}\\
&\le C \left(\frac{1}{|K|}\int_K a\right)^{1/2}
\left(\frac{1}{|K|}\int_K a^{-1}\right)^{1/2} h_K\|\nabla v\|_{L_a^2(K)},
\end{aligned}
$$
and so, in view of \eqref{A2 en K}, the lemma is proved. \end{proof}

We can now prove the error estimates for the Raviart-Thomas interpolation.
We will denote with $D\bs$ the differential matrix of $\bs$.

\begin{lemma}
Given $a\in A_2$ and a simplex or a rectangle $K$ there exists a constant depending only on $a$
and the regularity constant $\eta$, such that
\begin{equation}
\label{RT error en regulares}
\|\bs-\Pi_K\bs\|_{L^2_a(K)}
\le C h_K \|D\bs\|_{L^2_a(K)}.
\end{equation}
\end{lemma}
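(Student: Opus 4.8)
The plan is to reduce the estimate \eqref{RT error en regulares} to the generalized Poincar\'e inequality of Lemma \ref{poincare generalizada} by exploiting the structure of $\RT_0(K)$ together with the defining property \eqref{def de pi} of $\Pi_K$. First I would recall that on a simplex or a rectangle the space $\RT_0(K)$ contains the constant vector fields; hence, for any constant vector $\bt_0$, we have $\Pi_K\bt_0=\bt_0$ (this follows from \eqref{def de pi} since the degrees of freedom $\int_F\bt_0\cdot\bn_F$ determine $\bt_0$ uniquely within $\RT_0(K)$). Consequently $\bs-\Pi_K\bs=(\bs-\bt_0)-\Pi_K(\bs-\bt_0)$ for every constant $\bt_0$, and in particular we may take $\bt_0=\bs_K$, the average of $\bs$ over $K$. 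This reduces matters to bounding $\|(\bs-\bs_K)-\Pi_K(\bs-\bs_K)\|_{L^2_a(K)}$.

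Next I would split this by the triangle inequality into $\|\bs-\bs_K\|_{L^2_a(K)}$, which is already controlled by $C h_K\|D\bs\|_{L^2_a(K)}$ via the componentwise weighted Poincar\'e inequality \eqref{poincare clasica}, plus the term $\|\Pi_K(\bs-\bs_K)\|_{L^2_a(K)}$. For the latter I would use a stability estimate for $\Pi_K$ on $\RT_0(K)$ together with the explicit form of the degrees of freedom: writing $\Pi_K\bt$ in terms of the face integrals $\int_F\bt\cdot\bn_F$ and a standard basis of $\RT_0(K)$, a scaling argument on the reference element (combined with the regularity assumption $h_K/\rho_K\le\eta$) gives
$$
\|\Pi_K\bt\|_{L^\infty(K)}\le C\sum_F \frac{1}{|F|}\Big|\int_F\bt\cdot\bn_F\Big|
\le \frac{C}{|F|}\|\bt\|_{L^1(\partial K)}.
$$
Applying this with $\bt=\bs-\bs_K$ and then the $L^1$ trace theorem used in the proof of Lemma \ref{poincare generalizada}, namely $\|\bt\|_{L^1(F)}\le C\frac{|F|}{|K|}\{\|\bt\|_{L^1(K)}+h_K\|D\bt\|_{L^1(K)}\}$, one obtains (recalling $D(\bs-\bs_K)=D\bs$ and using the classical $L^1$ Poincar\'e inequality for $\|\bs-\bs_K\|_{L^1(K)}$)
$$
\|\Pi_K(\bs-\bs_K)\|_{L^\infty(K)}\le \frac{C h_K}{|K|}\|D\bs\|_{L^1(K)}.
$$
Multiplying by $\left(\int_K a\right)^{1/2}$, inserting $\|D\bs\|_{L^1(K)}\le\left(\int_K a^{-1}\right)^{1/2}\|D\bs\|_{L^2_a(K)}$ by Cauchy--Schwarz, and invoking \eqref{A2 en K} to absorb $\left(\frac1{|K|}\int_K a\right)^{1/2}\left(\frac1{|K|}\int_K a^{-1}\right)^{1/2}\le C$, yields the bound $C h_K\|D\bs\|_{L^2_a(K)}$ for this term as well.

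Combining the two pieces proves \eqref{RT error en regulares}. I expect the main obstacle to be the stability bound for $\Pi_K$ on $\RT_0(K)$ in terms of the face data: one must check that the constant in $\|\Pi_K\bt\|_{L^\infty(K)}\le C|F|^{-1}\|\bt\|_{L^1(\partial K)}$ depends only on $n$ and $\eta$, which requires a careful scaling to the reference simplex or rectangle and control of the Jacobians and of the ratios $|F|/|K|$, $|K|/h_K^n$ under the regularity hypothesis. Once that affine-equivalence bookkeeping is in place, the rest is a direct chaining of the trace theorem, the classical $L^1$ Poincar\'e inequality, and \eqref{A2 en K}, exactly as in the proof of Lemma \ref{poincare generalizada}.
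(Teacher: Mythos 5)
Your argument is correct, but it follows a genuinely different route from the paper's. The paper never subtracts a constant: it works directly with the normal components $(\bs-\Pi_K\bs)\cdot\bn_i$ for $n$ independent normals, observes from \eqref{def de pi} that each of these has vanishing mean on the corresponding face, and applies Lemma \ref{poincare generalizada} to each; the gradient term $\nabla(\Pi_K\bs\cdot\bn_i)$ is then controlled through the commutative diagram property ($\mbox{div\,}\Pi_K\bs=P_K\mbox{div\,}\bs$, resp.\ $\partial_j(\Pi_K\bs)_j=P_K(\partial_j\bs_j)$ on rectangles) together with the weighted stability \eqref{cota Phu} of $P_K$. You instead use that $\Pi_K$ reproduces constants, reduce to $\bs-\bs_K$, and bound $\|\Pi_K(\bs-\bs_K)\|_{L^2_a(K)}$ via an $L^\infty$--from--$L^1(\partial K)$ stability estimate for $\Pi_K$, chained with the $L^1$ trace inequality, the classical $L^1$ Poincar\'e inequality, and \eqref{A2 en K}. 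Both arguments are sound and rest on the same two weighted ingredients (\eqref{poincare clasica} and \eqref{A2 en K}); the paper's version is shorter because it recycles Lemma \ref{poincare generalizada} wholesale and needs no basis functions or Piola scaling, whereas yours is the classical ``reproduce constants plus interpolant stability'' scheme, which is more self-contained at the price of the affine-equivalence bookkeeping you correctly flag (checking $\|\phi_F\|_{L^\infty(K)}\le C/|F|$ and $|F|/|K|\sim 1/h_K$ under $h_K/\rho_K\le\eta$). One small point worth making explicit in your write-up: $\Pi_K(\bs-\bs_K)$ is well defined because $H^1_a(K)\subset W^{1,1}(K)$ (as the paper notes), so the face integrals in \eqref{def de pi} make sense; and in your final step the factor $\bigl(\tfrac1{|K|}\int_K a\bigr)^{1/2}\bigl(\tfrac1{|K|}\int_K a^{-1}\bigr)^{1/2}$ is bounded by \eqref{A2 en K} only thanks to the regularity assumption, which is consistent with the claimed dependence of $C$ on $\eta$.
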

\begin{proof}
Consider first the case of a simplex. We choose three faces $F_i$ with
corresponding normals $\bn_i$.

From \eqref{def de pi} we have
$\int_{F_i}(\bs-\Pi_K\bs)\cdot\bn_i=0$ and, therefore, using Lemma \ref{poincare generalizada}
we obtain,
\begin{equation}
\label{RT error en regulares 2}
\|(\bs-\Pi_K\bs)\cdot\bn_i\|_{L^2_a(K)}
\le C h_K \|\nabla[(\bs-\Pi_K\bs)\cdot\bn_i]\|_{L^2_a(K)}.
\end{equation}
But, $\frac{\partial(\Pi_K\bs)_j}{\partial x_k}=0$ for $k\neq j$, while
$$
\frac{\partial(\Pi_K\bs)_j}{\partial x_j}=\frac{\mbox{div\,}\Pi_K\bs}{n}
=\frac{P_K\mbox{div\,}\bs}{n},
$$
where we have used the commutative diagram property. Then, in view of \eqref{cota Phu}
we obtain
\begin{equation}
\label{RT error en regulares 3}
\|\nabla(\Pi_K\bs\cdot\bn_i)\|_{L^2_a(K)}
\le C\|D\bs\|_{L^2_a(K)}.
\end{equation}
Now observe that,
$$
|\bs-\Pi_K\bs|\le C\sum_{i=1}^3|(\bs-\Pi\bs)\cdot\bn_i|
$$
with a constant depending only on $\eta$, and so, \eqref{RT error en regulares} follows
from \eqref{RT error en regulares 2}
and \eqref{RT error en regulares 3}.

For a rectangular element we proceed in the same way. The only difference is that
to prove \eqref{RT error en regulares 3} we use now \eqref{cota Phu} combined with
$$
\frac{\partial(\Pi_K\bs)_j}{\partial x_j}
=P_K\left(\frac{\partial\bs_j}{\partial x_j}\right).
$$
\end{proof}

Combining the error estimates obtained above with the results of the previous section
we can now state the main theorem for approximation by Raviart-Thomas of lowest order
on regular families of meshes.

\begin{theorem}
\label{teorema en mallas regulares}
Let $\cth$ be a family of meshes with regularity constant $\eta$ and $h=\max_{K\in\cth}h_K$.
If $(\bs,u)$ and $(\bs_h,u_h)$ are the solutions of (\ref{Neumann}) and (\ref{mf}),
and (\ref{discrete Neumann}) and (\ref{mixedfem}) respectively then, for $a\in A_2$,
there exists a constant $C$ depending only on ${\mathcal D}$, $a$ and $\eta$ such that
$$
\|\bs-\bs_h\|_{L_{a^{-1}}^2}
\le Ch \|D\bs\|_{L_{a^{-1}}^2}
$$
and
$$
\|u-u_h\|_{L_a^2}
\le Ch \left\{\|D\bs\|_{L_{a^{-1}}^2}+\|\nabla u\|_{L_a^2}\right\}.
$$
\end{theorem}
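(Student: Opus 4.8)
The plan is to assemble the two estimates from the machinery already developed in the excerpt. For the vector variable, I would start from Lemma~\ref{error sigma}, which gives the quasi-optimality bound $\|\bs-\bs_h\|_{L^2_{a^{-1}}}\le\|\bs-\Pi_h\bs\|_{L^2_{a^{-1}}}$. Writing $\|\bs-\Pi_h\bs\|_{L^2_{a^{-1}}}^2=\sum_{K\in\cth}\|\bs-\Pi_K\bs\|_{L^2_{a^{-1}}(K)}^2$ and applying the local interpolation estimate \eqref{RT error en regulares} \emph{with the weight $a^{-1}$ in place of $a$} (this is legitimate because $a\in A_2$ iff $a^{-1}\in A_2$, as noted in the excerpt, and the hypotheses of that lemma were stated for a generic $A_2$ weight), each term is bounded by $C h_K^2\|D\bs\|_{L^2_{a^{-1}}(K)}^2\le Ch^2\|D\bs\|_{L^2_{a^{-1}}(K)}^2$ since $h_K\le h$. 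Summing over $K$ yields $\|\bs-\bs_h\|_{L^2_{a^{-1}}}\le Ch\|D\bs\|_{L^2_{a^{-1}}}$, which is the first inequality.

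For the scalar variable I would invoke the Corollary stated just before Section~\ref{RT}, i.e. $\|u-u_h\|_{L^2_a}\le\|u-P_hu\|_{L^2_a}+C\|\bs-\Pi_h\bs\|_{L^2_{a^{-1}}}$; this already packages Lemmas~\ref{error sigma} and~\ref{error u}. The second term is controlled by $Ch\|D\bs\|_{L^2_{a^{-1}}}$ exactly as above. For the first term I would again split over elements, $\|u-P_hu\|_{L^2_a}^2=\sum_K\|u-P_Ku\|_{L^2_a(K)}^2$, and apply the weighted Poincaré inequality \eqref{poincare clasica}, giving $\|u-P_Ku\|_{L^2_a(K)}\le Ch_K\|\nabla u\|_{L^2_a(K)}\le Ch\|\nabla u\|_{L^2_a(K)}$; summing yields $\|u-P_hu\|_{L^2_a}\le Ch\|\nabla u\|_{L^2_a}$. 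Combining the two pieces gives $\|u-u_h\|_{L^2_a}\le Ch\bigl\{\|D\bs\|_{L^2_{a^{-1}}}+\|\nabla u\|_{L^2_a}\bigr\}$.

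The only genuinely nontrivial point is making sure the stability hypothesis \eqref{prop4} required by Lemma~\ref{error u} (and hence by the Corollary) actually holds for the lowest-order Raviart-Thomas operator with weight $a^{-1}$. That estimate, $\|\Pi_h\bt\|_{L^2_{a^{-1}}}\le C\|\bt\|_{H^1_{a^{-1}}}$, is asserted in Section~\ref{mixed approximations} as something to be proved for RT$_0$ "in a forthcoming section," so in the write-up of this theorem I would simply cite that result (applied with $a^{-1}\in A_2$); I expect it follows from a local argument using \eqref{def de pi}, the trace embedding $H^1_{a^{-1}}(K)\subset W^{1,1}(K)$ noted in the excerpt, and \eqref{A2 en K}, analogous to the proof of \eqref{RT error en regulares 3}. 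A minor bookkeeping matter is tracking the dependence of the constant: at each stage $C$ depends only on $n$, on $\eta$ through \eqref{A2 en K}, on $[a]_{A_2}=[a^{-1}]_{A_2}$, and—via Lemma~\ref{inversa de la div}—on $\mathcal D$, so the final constant depends only on $\mathcal D$, $a$ and $\eta$ as claimed. No single step is hard; the proof is essentially a localization-and-summation assembly of results already in hand.
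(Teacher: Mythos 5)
Your proof is correct and follows essentially the same route as the paper: Lemma~\ref{error sigma} plus the local estimate \eqref{RT error en regulares} applied with the weight $a^{-1}$ for the vector variable, and Lemma~\ref{error u} (via the Corollary) together with \eqref{RT error en regulares} and \eqref{poincare clasica} for the scalar one. The only point you defer to an external citation --- the stability \eqref{prop4} --- the paper disposes of on the spot by observing that \eqref{RT error en regulares} already implies it via the triangle inequality, $\|\Pi_h\bt\|_{L^2_{a^{-1}}}\le\|\bt\|_{L^2_{a^{-1}}}+Ch\|D\bt\|_{L^2_{a^{-1}}}\le C\|\bt\|_{H^1_{a^{-1}}}$, so no separate local argument is needed.
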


\begin{proof}
The error estimate for $\bs$ follows from Lemma \ref{error sigma} combined with
the estimate \eqref{RT error en regulares} applied to the weight $a^{-1}$ (recall that
$a\in A_2$ if and only if $a^{-1}\in A_2$).

On the other hand, observe that
\eqref{RT error en regulares} implies the hypothesis \eqref{prop4} assumed in Lemma \ref{error u}.
Then, to bound the error for $u$ we apply that lemma,
\eqref{RT error en regulares} again, and \eqref{poincare clasica}.
\end{proof}

\section{Anisotropic error estimates}
\label{RT anisotropicos}
\setcounter{equation}{0}
Our next goal is to prove anisotropic error estimates suitable for problems
with boundary layers. For this kind of problems it is useful to have
estimates involving a weighted norm on the right hand side where the weight
is a power of the distance to some part of the boundary.

To present the main arguments we consider first the case of rectangular elements.
Then we show how similar ideas can be applied to prismatic elements which are
of interest in the application that we are going to consider in the next section, and more generally,
in many problems with solutions presenting boundary layers.
The case of simplex can be treated in a similar way but, as in the un-weighted case,
anisotropic error estimates are valid
only for some particular kind of degenerate elements (see \cite{AD1}).

Proceeding as in the previous section, we need now the following weighted improved Poincar\'e inequality, which is well known (see, for example, \cite{H1,DD}).
For $a\in A_2$ and $Q$ a cube,
\begin{equation}
\label{promedio cero}
\|v-v_Q\|_{L^2_a(Q)}\le C\|d\nabla v\|_{L^2_a(Q)}
\end{equation}
where $d$ denotes the distance to $\partial Q$.
Consider an arbitrary rectangle
$$
R=[a_1,b_1]\times\cdots\times[a_n,b_n].
$$
If we replace $Q$ by $R$ in the above inequality, it is known that
the constant in (\ref{promedio cero}) blows up when the ratio
between outer and inner diameter goes to infinity. However, we have the following
anisotropic version if the weight belongs to the smaller class
$A^s_2$ defined in the introduction.
For $i=1,\cdots,n$ we define
$$
d_i(x)=\min\{(b_i-x_i),(x_i-a_i)\} \qquad \mbox{and } \qquad h_i=b_i-a_i.
$$
\begin{lemma}
\label{lema 3 1}
For $a\in A^s_2$,
\begin{equation}
\label{promedio cero anisotropica}
\|v-v_R\|_{L^2_a(R)}
\le C\sum_{i=1}^n\left\|d_i\frac{\partial v}{\partial x_i}\right\|_{L^2_a(R)}.
\end{equation}
\end{lemma}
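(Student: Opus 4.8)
The plan is to reduce the $n$--dimensional estimate to $n$ applications of the one--dimensional version of \eqref{promedio cero} (a cube in $\mathbb R$ being just an interval), combined with the elementary fact that, for $a\in A^s_2$, averaging with respect to one variable is bounded on $L^2_a(R)$.

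First I would introduce the partial averages: for $0\le k\le n$ let $\bar v^{(k)}$ be the function obtained by averaging $v$ over the first $k$ variables,
\[
\bar v^{(k)}(x)=\frac{1}{h_1\cdots h_k}\int_{a_1}^{b_1}\!\!\!\cdots\int_{a_k}^{b_k} v(y_1,\dots,y_k,x_{k+1},\dots,x_n)\,dy_1\cdots dy_k ,
\]
so that $\bar v^{(0)}=v$ and $\bar v^{(n)}=v_R$. The telescoping identity $v-v_R=\sum_{k=1}^n g_k$, with $g_k:=\bar v^{(k-1)}-\bar v^{(k)}$, together with the triangle inequality, reduces the proof to estimating $\|g_k\|_{L^2_a(R)}$ for each $k$. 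The key structural facts are that $g_k$ does not depend on $x_1,\dots,x_{k-1}$, that $g_k$ has vanishing average with respect to $x_k$ on $[a_k,b_k]$ (since $\bar v^{(k)}$ is exactly that average of $\bar v^{(k-1)}$), and that $\partial g_k/\partial x_k=\partial \bar v^{(k-1)}/\partial x_k$ coincides with the average over the first $k-1$ variables of $\partial v/\partial x_k$.

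Next, fixing $k$ and freezing all variables except $x_k$, the function $x_k\mapsto g_k$ on $[a_k,b_k]$ has zero mean, while, by the characterization of $A^s_2$ recalled in the introduction, $x_k\mapsto a(x)$ is an $A_2$ weight of one variable with constant at most $[a]_{A^s_2}$. Applying \eqref{promedio cero} in dimension one---whose constant is independent of the length of the interval, because under a dilation both sides of that inequality are multiplied by the same factor while the $A_2$ constant is unchanged---and integrating over the remaining variables against the corresponding weight, I obtain
\[
\|g_k\|_{L^2_a(R)}\le C\left\|d_k\,\frac{\partial \bar v^{(k-1)}}{\partial x_k}\right\|_{L^2_a(R)} .
\]
Since $d_k$ depends on $x_k$ only and $\partial \bar v^{(k-1)}/\partial x_k$ is the average of $\partial v/\partial x_k$ over $x_1,\dots,x_{k-1}$, it remains to show that averaging with respect to any single one of the variables $x_1,\dots,x_{k-1}$ is a contraction on $L^2_a(R)$ up to a factor $[a]_{A^s_2}^{1/2}$. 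This follows from the Cauchy--Schwarz inequality exactly as in the proof of \eqref{cota Phu}: for fixed values of the other variables the one--variable averages of $a$ and $a^{-1}$ over $[a_j,b_j]$ satisfy the $A_2$ bound with constant $[a]_{A^s_2}$. Iterating over $j=1,\dots,k-1$ gives $\|d_k\,\partial \bar v^{(k-1)}/\partial x_k\|_{L^2_a(R)}\le[a]_{A^s_2}^{(k-1)/2}\|d_k\,\partial v/\partial x_k\|_{L^2_a(R)}$, and summing over $k$ yields \eqref{promedio cero anisotropica}.

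\textbf{Expected obstacles.} The manipulations above (differentiation under the integral sign, Fubini, slicing) are most transparently carried out for $v\in C^\infty(\overline R)$ and then extended to all $v\in H^1_a(R)$ by density of smooth functions in that space, which holds since $a\in A^s_2\subset A_2$; this is the main technical point. The other delicate issue is keeping the one--dimensional Poincar\'e constant in its scale invariant form: this is precisely why the anisotropic estimate survives while the constant in \eqref{promedio cero} applied to a thin rectangle blows up, and it is also the place where the class $A_2$ alone would be insufficient, since it does not control the one--variable averages of $a$ and $a^{-1}$ appearing in the averaging step.
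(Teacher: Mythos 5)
Your argument is correct, but it is not the one the paper uses. The paper's proof is a two-line scaling argument: on the unit cube $Q$ the estimate with $\sum_i\|d_i\,\partial v/\partial x_i\|$ on the right follows at once from \eqref{promedio cero} (since the distance to $\partial Q$ is $\min_i d_i\le d_i$), and then the anisotropic change of variables $x_i=h_i\hat x_i+a_i$ transfers it to $R$, because $d_i\,\partial v/\partial x_i$ is invariant under that map and the class $A_2^s$ (unlike $A_2$) is preserved with the same constant under anisotropic dilations. You instead telescope through the partial averages $\bar v^{(k)}$, apply the one--dimensional, scale--invariant version of \eqref{promedio cero} slice by slice in $x_k$ (using that $x_k\mapsto a(x)$ is a one--variable $A_2$ weight uniformly in the frozen variables), and control the averaging operators in $x_1,\dots,x_{k-1}$ by the Cauchy--Schwarz argument of \eqref{cota Phu}; all of these steps check out, including the zero--mean property of $g_k$ in $x_k$ and the fact that $d_k$ commutes with averaging in the other variables. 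The trade--off: the paper's route is shorter but imports the full $n$--dimensional weighted improved Poincar\'e inequality on a cube from \cite{H1,DD}, whereas yours needs only its one--dimensional case (essentially a weighted Hardy--type inequality) at the price of a longer bookkeeping argument and a constant growing like $[a]_{A_2^s}^{(n-1)/2}$. Both proofs make essential and correctly identified use of the strong class $A_2^s$ --- yours through the uniform one--variable characterization, the paper's through invariance under anisotropic dilations --- so either is acceptable.
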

\begin{proof} It follows immediately from (\ref{promedio cero}) that,
if $Q$ is the unitary cube
$$
\|v-v_Q\|_{L^2_a(Q)}
\le C\sum_{i=1}^n\left\|d_i\frac{\partial v}{\partial x_i}\right\|_{L^2_a(Q)}.
$$
Then, (\ref{promedio cero anisotropica}) follows by standard arguments making
the change of variables $x_i=h_i\hat x_i+a_i$ and
using that, for $\hat a(\hat x):=a(x)$,
$a\in A_2^s \Longrightarrow\hat a\in A_2^s$.
\end{proof}

\begin{lemma}
\label{sobre caras}
For $a\in A^s_2$ and $F$ the face contained in $x_j=a_j$ we have
\begin{equation}
\label{promedio cero en cara}
\|v-v_F\|_{L^2_a(R)}
\le C\left\{\left\|(b_j-x_j)\frac{\partial v}{\partial x_j}\right\|_{L^2_a(R)}
+\sum_{i\neq j}\left\|d_i\frac{\partial v}{\partial x_i}\right\|_{L^2_a(R)}\right\}.
\end{equation}
\end{lemma}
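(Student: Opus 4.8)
The plan is to reduce Lemma \ref{sobre caras} to the already-established anisotropic Poincaré inequality \eqref{promedio cero anisotropica} by the same triangle-inequality device used in Lemma \ref{poincare generalizada}: it suffices to estimate $\|v_R - v_F\|_{L^2_a(R)}$, since $\|v-v_R\|_{L^2_a(R)}$ is controlled by \eqref{promedio cero anisotropica} and $\|v-v_F\|_{L^2_a(R)}\le\|v-v_R\|_{L^2_a(R)}+\|v_R-v_F\|_{L^2_a(R)}$. Because $v_R-v_F$ is a constant, we have $\|v_R-v_F\|_{L^2_a(R)}=\left(\int_R a\right)^{1/2}|v_R-v_F|$, so everything comes down to a pointwise bound on the scalar quantity $|v_R - v_F|$.

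To bound $|v_R-v_F|$ I would write $v_F - v_R = \frac{1}{|F|}\int_F (v - v_R)$ and apply the classical (unweighted) trace inequality on the rectangle $R$ relating $\|v-v_R\|_{L^1(F)}$ to $\|v-v_R\|_{L^1(R)}$ and the derivatives of $v$ on $R$. The essential point, and the place where the asymmetry in the statement (the factor $b_j-x_j$ attached to $\partial v/\partial x_j$ rather than $d_j$) enters, is that $F$ lies in the face $x_j=a_j$: the relevant direction-$j$ trace estimate should be taken in the sharp, one-sided form in which the weight multiplying $\partial v/\partial x_j$ is the distance to the \emph{opposite} face $\{x_j=b_j\}$, i.e. $b_j-x_j$, while in the transverse directions $x_i$, $i\ne j$, the symmetric distance $d_i$ suffices, exactly as in \eqref{promedio cero anisotropica}. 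After dividing out the measures $|F|$ and $|R|$ and using $|F|/|R|\sim h_j^{-1}$, one is left with a sum of the form $\frac{1}{|R|}\int_R\bigl\{(b_j-x_j)|\partial_j v| + \sum_{i\ne j} d_i|\partial_i v|\bigr\}$, and the weight $a$ is reinserted by the Cauchy--Schwarz trick $\int_R w \le \left(\int_R a^{-1}\right)^{1/2}\|w\|_{L^2_a(R)}$ for each of the $n$ terms $w$. Multiplying by $\left(\int_R a\right)^{1/2}$ then produces the constant $\left(\frac{1}{|R|}\int_R a\right)^{1/2}\left(\frac{1}{|R|}\int_R a^{-1}\right)^{1/2}$ in front, which is bounded by $[a]_{A^s_2}^{1/2}$ (or a fixed power thereof) since $R$ is a rectangle with axis-parallel faces and $a\in A^s_2$; no regularity assumption on $R$ is needed, which is the whole reason for passing to $A^s_2$.

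The step I expect to require the most care is establishing the one-sided anisotropic trace inequality on a general rectangle with the correct weight $b_j-x_j$ on the $j$-th derivative. One clean way is to work first on the unit cube by the fundamental-theorem-of-calculus identity $v(x) - v(y_j, \hat x) = \int_{y_j}^{x_j}\partial_j v(t,\hat x)\,dt$ with $y_j=a_j$ on $F$, estimate the resulting integral by inserting and removing the weight $b_j - t$ (which is comparable to a constant on the unit cube but carries the correct scaling), average over $F$ and then over $R$, control the transverse contributions by \eqref{promedio cero anisotropica} applied on slices, and finally transfer to the general rectangle by the anisotropic change of variables $x_i = h_i\hat x_i + a_i$ — under which $b_j - x_j$ scales like $h_j(\,\widehat{b_j} - \hat x_j)$, $d_i$ scales like $h_i\hat d_i$, and $A^s_2$-membership (with a controlled constant) is preserved, exactly as invoked at the end of the proof of Lemma \ref{lema 3 1}. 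Alternatively, one can cite the weighted one-sided Hardy/Poincaré estimates in the references \cite{H1,DD} already used for \eqref{promedio cero} and adapt them to the half-open trace; either route reduces the lemma to bookkeeping once the correct direction-$j$ weight is identified.
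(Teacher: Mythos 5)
Your overall skeleton is the paper's: split off $\|v-v_R\|_{L^2_a(R)}$ via the triangle inequality, control it by \eqref{promedio cero anisotropica}, reduce the remainder to the scalar $|v_R-v_F|$, and reinsert the weight with Cauchy--Schwarz so that the product $\bigl(\tfrac{1}{|R|}\int_R a\bigr)^{1/2}\bigl(\tfrac{1}{|R|}\int_R a^{-1}\bigr)^{1/2}\le [a]_{A^s_2}^{1/2}$ appears; you also correctly identified that the one-sidedness of $F$ is what produces the weight $b_j-x_j$ rather than $d_j$. Where you diverge is in how $|v_R-v_F|$ is bounded: you propose a one-sided anisotropic trace inequality, flag it as the delicate step, and sketch a proof on the unit cube with slice-wise Poincar\'e estimates for the transverse directions followed by a scaling argument. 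The paper instead uses the exact identity, obtained by one integration by parts in $x_j$ (equivalently, your fundamental-theorem-of-calculus formula averaged in $x_j$ via Fubini),
\begin{equation*}
\frac{1}{|F|}\int_F v\,dS=\frac{1}{|R|}\int_R v\,dx+\frac{1}{|R|}\int_R (x_j-b_j)\,\frac{\partial v}{\partial x_j}\,dx,
\end{equation*}
so that $|v_R-v_F|\le \frac{1}{|R|}\int_R (b_j-x_j)\bigl|\partial v/\partial x_j\bigr|\,dx$ exactly, with no trace theorem, no transverse derivative contributions, no residual $\|v-v_R\|_{L^1(R)}$ term to absorb, and no passage to the reference cube. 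Your route would work if carried through (the FTC computation you describe collapses to precisely this identity), but the machinery you anticipate needing --- a sharp one-sided trace estimate, an $L^1$ anisotropic Poincar\'e inequality on slices, and preservation of the $A^s_2$ constant under anisotropic scaling for this step --- is all avoidable; the lemma is really just the displayed identity plus weighted Cauchy--Schwarz plus Lemma \ref{lema 3 1}.
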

\begin{proof}
By a simple integration by parts in the $x_j$ variable we have
$$
\frac1{|F|}\int_F v\,dS
=\frac1{|R|}\int_R v\,dx
+ \frac1{|R|}\int_R (x_j-b_j) \frac{\partial v}{\partial x_j}\,dx
$$
Then,
$$
v-v_F=v-v_R
-\frac1{|R|}\int_R (x_j-b_j) \frac{\partial v}{\partial x_j}\,dx
$$
and therefore,
$$
\|v-v_F\|_{L^2_a(R)}\le \|v-v_R\|_{L^2_a(R)}
+\frac1{|R|}\left(\int_R a\, dx\right)^{1/2}
\int_R (b_j-x_j) \left|\frac{\partial v}{\partial x_j}\right|\,dx
$$
but, multiplying and dividing by $a^{1/2}$ and using the Schwarz
inequality we obtain
$$
\int_R (b_j-x_j) \left|\frac{\partial v}{\partial x_j}\right|\,dx
\le
\left\|(b_j-x_j) \frac{\partial v}{\partial x_j}\right\|_{L^2_a(R)}
\left(\int_R a^{-1}\, dx\right)^{1/2}
$$
and consequently,
$$
\|v-v_F\|_{L^2_a(R)}\le \|v-v_R\|_{L^2_a(R)}
+[a]^{1/2}_{A^s_2}\left\|(b_j-x_j) \frac{\partial v}{\partial x_j}\right\|_{L^2_a(R)}.
$$
Therefore, (\ref{promedio cero en cara}) follows from
(\ref{promedio cero anisotropica}).
\end{proof}

We can now prove anisotropic error estimates for the Raviart-Thomas interpolation
$\Pi_R$. Observe that each component $(\Pi_R\bs)_j$ depends only on $\bs_j$, and so,
to simplify notation we will write simply $\Pi_R\bs_j$.

\begin{lemma}
\label{errorinterp}
For $a\in A^s_2$ and $1\le j\le n$,
\begin{equation}
\label{RT error}
\|\bs_j-\Pi_R\bs_j\|_{L^2_a(R)}
\le C
\left\{\sum_{i\neq j}\left\|d_i\frac{\partial\bs_j}{\partial x_i}\right\|_{L^2_a(R)}
+h_j\left\|\frac{\partial\bs_j}{\partial x_j}\right\|_{L^2_a(R)}\right\}.
\end{equation}
\end{lemma}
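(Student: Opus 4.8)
The plan is to mimic the regular-case argument (the proof of \eqref{RT error en regulares}), replacing the isotropic Poincar\'e estimate of Lemma \ref{poincare generalizada} by the anisotropic one of Lemma \ref{sobre caras}. The starting point is to isolate the single flux constraint that controls the $j$-th component of $\bs-\Pi_R\bs$. Among the faces of $R$, only the two faces orthogonal to the $x_j$-axis have unit normal $\pm$ the $j$-th coordinate vector; on every other face the defining relation \eqref{def de pi} constrains a different component of $\bs$. Taking $F_0$ to be the face contained in $\{x_j=a_j\}$, \eqref{def de pi} gives $\int_{F_0}(\bs_j-\Pi_R\bs_j)\,dS=0$, i.e. $(\bs_j-\Pi_R\bs_j)_{F_0}=0$. (This makes sense because $\bs_j\in H^1_a(R)\subset W^{1,1}(R)$, so its trace on $F_0$ is well defined and $\Pi_R\bs_j$ is meaningful.) Applying Lemma \ref{sobre caras} with $v=\bs_j-\Pi_R\bs_j$ and the face $F_0$ yields
$$
\|\bs_j-\Pi_R\bs_j\|_{L^2_a(R)}
\le C\left\{\left\|(b_j-x_j)\frac{\partial(\bs_j-\Pi_R\bs_j)}{\partial x_j}\right\|_{L^2_a(R)}
+\sum_{i\neq j}\left\|d_i\frac{\partial(\bs_j-\Pi_R\bs_j)}{\partial x_i}\right\|_{L^2_a(R)}\right\}.
$$

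Next I would evaluate the derivatives of $\Pi_R\bs_j$. Since $\Pi_R\bs_j$ is affine in $x_j$ alone, $\partial(\Pi_R\bs_j)/\partial x_i=0$ for $i\neq j$, so the sum over $i\neq j$ collapses exactly to the term $\sum_{i\neq j}\|d_i\,\partial\bs_j/\partial x_i\|_{L^2_a(R)}$ appearing on the right of \eqref{RT error}. For $i=j$, the commutative diagram property gives $\partial(\Pi_R\bs_j)/\partial x_j=P_R(\partial\bs_j/\partial x_j)$, a constant; hence, using $0\le b_j-x_j\le h_j$, the triangle inequality, and the elementary averaging bound $\|P_Rw\|_{L^2_a(R)}\le\big(\tfrac1{|R|}\int_R a\big)^{1/2}\big(\tfrac1{|R|}\int_R a^{-1}\big)^{1/2}\|w\|_{L^2_a(R)}\le[a]_{A^s_2}^{1/2}\|w\|_{L^2_a(R)}$ (obtained from Schwarz and the definition of $A^s_2$ directly on the rectangle $R$, with no regularity assumption), one estimates
$$
\left\|(b_j-x_j)\frac{\partial(\bs_j-\Pi_R\bs_j)}{\partial x_j}\right\|_{L^2_a(R)}
\le h_j\left\|\frac{\partial\bs_j}{\partial x_j}\right\|_{L^2_a(R)}+h_j[a]_{A^s_2}^{1/2}\left\|\frac{\partial\bs_j}{\partial x_j}\right\|_{L^2_a(R)}.
$$
Combining the two steps gives \eqref{RT error}.

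The argument is short once the right ingredients are in place, so there is no serious obstacle; the one point needing care is matching the hypotheses of Lemma \ref{sobre caras} — a face contained in $x_j=a_j$, with the $x_j$-derivative weighted by the distance $b_j-x_j$ to the opposite face — to the unique flux constraint available for the $j$-th component. Everything else (computing the derivatives of the affine interpolant, bounding $b_j-x_j$ by $h_j$, controlling the piecewise-constant projection) is routine bookkeeping, and crucially none of it uses any shape-regularity of $R$, which is precisely why one works with the strong class $A^s_2$ here.
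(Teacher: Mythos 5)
Your proposal is correct and follows essentially the same route as the paper: vanishing mean value of $\bs_j-\Pi_R\bs_j$ on the face $x_j=a_j$, then Lemma \ref{sobre caras}, then the observations that $\partial(\Pi_R\bs_j)/\partial x_i=0$ for $i\neq j$ and $\partial(\Pi_R\bs_j)/\partial x_j=P_R(\partial\bs_j/\partial x_j)$ together with the $[a]_{A^s_2}^{1/2}$ bound on $P_R$. The only (harmless) difference is that you spell out the bound $0\le b_j-x_j\le h_j$ and the triangle inequality explicitly, which the paper leaves implicit.
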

\begin{proof} Since $\bs_j-\Pi_R\bs_j$ has vanishing mean value on the face
defined by $x_j=a_j$ we obtain from (\ref{promedio cero en cara}),
$$
\begin{aligned}
&\|\bs_j-\Pi_R\bs_j\|_{L^2_a(R)}\\
&\le C
\left\{\sum_{i\neq j}\left\|d_i\frac{\partial}{\partial x_i}(\bs_j-\Pi_R\bs_j)\right\|_{L^2_a(R)}
+h_j\left\|\frac{\partial}{\partial x_j}(\bs_j-\Pi_R\bs_j)\right\|_{L^2_a(R)}\right\}.
\end{aligned}
$$
But, for $i\neq j$, $\frac{\partial(\Pi_R\bs)_j}{\partial x_i}=0$.
On the other hand from the definition of $\Pi_R$ we have
$$
\frac{\partial(\Pi_R\bs)_j}{\partial x_j}=P_R\left(\frac{\partial\bs_j}{\partial x_j}\right)
$$
and a simple argument using the Schwarz inequality shows that, for any $v\in L^2_a(R)$,
$$
\|P_R v\|_{L^2_a(R)}\le [a]^{1/2}_{A^s_2}\|v\|_{L^2_a(R)}
$$
and therefore,
$$
\left\|\frac{\partial(\Pi_R\bs)_j}{\partial x_j}\right\|_{L^2_a(R)}
\le [a]^{1/2}_{A^s_2}\left\|\frac{\partial\bs_j}{\partial x_j}\right\|_{L^2_a(R)},
$$
and the lemma is proved.
\end{proof}

Now we analyze the case of prismatic elements. For notational convenience we
work in ${\mathbb R}^{n+1}$ and introduce the variables $(x,y)$, with $x=(x_1,\cdots,x_n)\in{\mathbb R}^n$
and $y\in{\mathbb R}$. Therefore, the class $A^s_2$ denotes now the class of weights
satisfying
$$
[a]_{A^s_2}:=\sup_{R} \left(\frac{1}{|R|}
\int_R a \right)\left(\frac{1}{|R|}
\int_R a^{-1}\right)<\infty,
$$
where the supremum is taken over all $n+1$-dimensional rectangles.

We consider elements $P=K\times[y_0,y_1]$ where $K$ is
an $n$-dimensional simplex and $y_j\in{\mathbb R}$ for $j=0,1$.

Similar arguments than those used above for the anisotropic estimates in rectangular elements can be
used in this case. To simplify notation we will prove
only the particular weighted estimates that we will need for the application considered
in the next section. We will denote by $h_K$ the diameter of $K$. The elements
considered are anisotropic because no relation between $h_K$ and $y_1-y_0$
is required. On the other hand, for the simplices we assume the regularity condition
$h_K/\rho_K\le\eta$.

\begin{lemma}
\label{sobre caras en prismas}
Given $a\in A^s_2$, $P=K\times[y_0,y_1]$ a prismatic element, and $F_P$ a face of $P$
given by $F_P:=F\times[y_0,y_1]$, where $F$ is a face of $K$, we have
\begin{equation}
\label{promedio cero en cara de K por I}
\|v-v_{F_P}\|_{L^2_a(P)}
\le C\left\{\left\|(y-y_0)\frac{\partial v}{\partial y}\right\|_{L^2_a(P)}
+h_K\left\|\nabla_x v\right\|_{L^2_a(P)}\right\}.
\end{equation}
\end{lemma}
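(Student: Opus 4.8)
The plan is to mimic the proof of Lemma \ref{sobre caras} (the rectangular case), but now carrying out the averaging argument only in the $y$ variable and treating the $x$ variables via the classical (isotropic, since $K$ is regular) trace and Poincar\'e inequalities. First I would reduce the claim to estimating $\|v-v_{F_P}\|_{L^2_a(P)}$ in terms of $\|v-v_P\|_{L^2_a(P)}$ plus a controllable remainder, exactly as before. Writing $v_{F_P}-v_P=\frac1{|F_P|}\int_{F_P}(v-v_P)$ and integrating by parts in $y$ over the segment $[y_0,y_1]$ (as in Lemma \ref{sobre caras}), one gets
$$
v_{F_P}-v_P=\frac1{|F_P|}\int_{F_P}(v-v_P)\,dS
$$
and then a trace inequality on $F\subset\partial K$ (applied slicewise in $y$, using regularity of $K$) yields
$$
\|v-v_{F_P}\|_{L^2_a(P)}\le \|v-v_P\|_{L^2_a(P)}+C\,h_K^{-1}\!\int_P |v-v_K^{(y)}|\,a^{1/2}\,a^{-1/2}\,dx\,dy
+ C\!\int_P |\nabla_x v|\,dx\,dy,
$$
where $v_K^{(y)}$ denotes the average over $K$ at fixed $y$; the first remainder is absorbed by a slicewise Poincar\'e inequality in $x$ and a Schwarz inequality against $a^{-1}$, giving a bound by $C\,[a]_{A^s_2}^{1/2}\|h_K\nabla_x v\|_{L^2_a(P)}$.

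The second step is to control $\|v-v_P\|_{L^2_a(P)}$ itself by the right-hand side of \eqref{promedio cero en cara de K por I}. For this I would use the anisotropic improved Poincar\'e inequality in the spirit of Lemma \ref{lema 3 1}: on the reference prism $\hat K\times[0,1]$ the estimate
$$
\|v-v_P\|_{L^2_a}\le C\Big(\big\|d_y\tfrac{\partial v}{\partial y}\big\|_{L^2_a}+\|\nabla_x v\|_{L^2_a}\Big)
$$
holds with $d_y$ the distance to $\{y=0,1\}$, because $a\in A_2^s$ restricted to the $y$ variable (uniformly in $x$) is an $A_2$ weight of one variable, while in the $x$ directions $\hat K$ has bounded aspect ratio so no distance weight is needed. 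Scaling back via $x\mapsto$ (affine map onto $K$), $y\mapsto (y_1-y_0)\hat y+y_0$, and using that $A_2^s$ is preserved under such coordinatewise affine changes (as invoked in Lemma \ref{lema 3 1}), produces the factors $h_K$ on $\nabla_x v$ and $(y-y_0)$ — or $d_y\le y-y_0$ — on $\partial_y v$. Combining the two steps and using $d_y\le y-y_0$ gives \eqref{promedio cero en cara de K por I}.

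The main obstacle is the mixed anisotropic/isotropic Poincar\'e-type inequality on the reference prism and the bookkeeping of how the $A_2^s$ constant enters when one integrates slicewise: one must be careful that the one-dimensional $A_2$ estimates in $y$ are uniform in $x$, and conversely that the isotropic Poincar\'e/trace inequalities in $x$ hold with constants uniform in $y$ with respect to the measure $a(x,y)\,dx$ — this is where the ``$A_2$ in each variable uniformly in the others'' characterization of $A_2^s$ (cited from \cite{GCRF,K}) does the work. Everything else — the integration by parts in $y$, the Schwarz inequalities pairing against $a^{\pm1/2}$, and the affine scaling — is routine and parallels Lemmas \ref{lema 3 1}, \ref{sobre caras}, and \ref{errorinterp}.
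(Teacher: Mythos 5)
Your proposal is correct and follows essentially the same route as the paper: first the anisotropic Poincar\'e inequality $\|v-v_P\|_{L^2_a(P)}\le C\{\|(y-y_0)\partial v/\partial y\|_{L^2_a(P)}+h_K\|\nabla_x v\|_{L^2_a(P)}\}$ obtained by transferring the improved Poincar\'e inequality from a reference prism, and then a reduction of $|v_{F_P}-v_P|$ to $C\,h_K|P|^{-1}\int_P|\nabla_x v|$ followed by Cauchy--Schwarz against $a^{\pm1/2}$ and the $A_2^s$ condition. The only (immaterial) difference is that you get this last reduction from the $L^1$ trace theorem on $F$ plus a slicewise Poincar\'e inequality in $x$, whereas the paper uses the exact identity $\frac1{|F|}\int_F v\,dS=\frac1{|K|}\int_K v+\frac1{n|K|}\int_K(x-x_F)\cdot\nabla_x v$, with $x_F$ the vertex of $K$ opposite $F$, integrated in $y$.
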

\begin{proof}
Proceeding as in the proof of \eqref{promedio cero anisotropica} we can prove the
Poincar\'e type inequality
\begin{equation}
\label{Poincare anisotropica en prismas}
\|v-v_P\|_{L^2_a(P)}
\le C\left\{\left\|(y-y_0)\frac{\partial v}{\partial y}\right\|_{L^2_a(P)}
+h_K\left\|\nabla_x v\right\|_{L^2_a(P)}\right\}.
\end{equation}
We will denote with $dS_F$ and $dS_{F_P}$ the surface measures on $F$ and $F_P$
respectively. Calling $x_F$ the vertex of $K$ opposite to $F$ and
integrating by parts we have,
$$
\int_K (x-x_F)\cdot\nabla_x v\,dx=-n\int_K v\,dx + \int_F (x-x_F)\cdot\bn_F v\,dS_F
$$
but, for $x\in F$, $(x-x_F)\cdot\bn_F=n|K|/|F|$, and therefore,
$$
\frac1{|F|}\int_F v\,dS=\frac1{|K|}\int_K v\,dx+\frac1{n|K|}\int_K (x-x_F)\cdot\nabla_x v\,dx.
$$
Then, integrating in the variable $y$,
$$
\frac1{|F|}\int_{F_P} v\,dS_{F_P}=\frac1{|K|}\int_P v\,dxdy
+\frac1{n|K|}\int_P (x-x_F)\cdot\nabla_x v\,dxdy
$$
and dividing this equation by $(y_1-y_0)$ we obtain
$$
v-v_{F_P}=v-v_P
-\frac1{n|P|}\int_P (x-x_F)\cdot\nabla_x v\,dxdy
$$
which, using \eqref{Poincare anisotropica en prismas} and proceeding as in the last part
of the proof of Lemma \ref{sobre caras}, implies \eqref{promedio cero en cara de K por I}.
\end{proof}

\begin{lemma}
\label{sobre caras de arriba y abajo en prismas}
Given $a\in A^s_2$, $P=K\times[y_0,y_1]$ a prismatic element, and $F_P$ a face of $P$
given by $F_P:=K\times\{y_j\}$, $j=0$ or $1$, we have
\begin{equation}
\label{promedio cero en cara de K por I segunda}
\|v-v_{F_P}\|_{L^2_a(P)}
\le C\left\{(y_1-y_0)\left\|\frac{\partial v}{\partial y}\right\|_{L^2_a(P)}
+h_K\left\|\nabla_x v\right\|_{L^2_a(P)}\right\}.
\end{equation}
\end{lemma}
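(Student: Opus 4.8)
The plan is to mimic the proof of Lemma \ref{sobre caras en prismas}, changing only the way the face average $v_{F_P}$ is compared with the element average $v_P$. Here $F_P = K\times\{y_j\}$ is a top or bottom face, so the relevant integration by parts must be carried out in the $y$ variable rather than in the $x$ variables. First I would recall the anisotropic Poincar\'e inequality on the prism,
$$
\|v-v_P\|_{L^2_a(P)}
\le C\left\{\left\|(y-y_0)\frac{\partial v}{\partial y}\right\|_{L^2_a(P)}
+h_K\left\|\nabla_x v\right\|_{L^2_a(P)}\right\},
$$
which was established (as \eqref{Poincare anisotropica en prismas}) in the proof of Lemma \ref{sobre caras en prismas}; note that $\|(y-y_0)\partial_y v\|_{L^2_a(P)}\le (y_1-y_0)\|\partial_y v\|_{L^2_a(P)}$, so this already bounds $\|v-v_P\|_{L^2_a(P)}$ by the right-hand side of \eqref{promedio cero en cara de K por I segunda}.

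Next I would relate $v_{F_P}$ to $v_P$. Assume for concreteness $j=0$, so $F_P = K\times\{y_0\}$. By the fundamental theorem of calculus in the $y$ variable, for $(x,y)\in P$,
$$
v(x,y_0) = v(x,y) - \int_{y_0}^{y}\frac{\partial v}{\partial y}(x,t)\,dt,
$$
and averaging over $P$ gives
$$
v_{F_P}
= v_P - \frac{1}{|P|}\int_P\int_{y_0}^{y}\frac{\partial v}{\partial y}(x,t)\,dt\,dx\,dy.
$$
Hence
$$
v - v_{F_P} = (v - v_P) + \frac{1}{|P|}\int_P\int_{y_0}^{y}\frac{\partial v}{\partial y}(x,t)\,dt\,dx\,dy,
$$
and it remains to bound the $L^2_a(P)$-norm of the (constant in $(x,y)$) correction term. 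Since $\Big|\int_{y_0}^{y}\partial_y v(x,t)\,dt\Big|\le \int_{y_0}^{y_1}\big|\partial_y v(x,t)\big|\,dt$, the double integral is at most $\int_P |\partial_y v|\,dx\,dy$, and multiplying and dividing by $a^{1/2}$ and using the Schwarz inequality as in the proof of Lemma \ref{sobre caras} yields
$$
\frac{1}{|P|}\int_P |\partial_y v|\,dx\,dy
\le \frac{1}{|P|}\Big(\int_P a^{-1}\Big)^{1/2}\Big\|\tfrac{\partial v}{\partial y}\Big\|_{L^2_a(P)}.
$$
Multiplying by $\big(\int_P a\big)^{1/2}$ (the $L^2_a(P)$-norm of the constant) and invoking the $A^s_2$ bound $\big(\frac1{|P|}\int_P a\big)\big(\frac1{|P|}\int_P a^{-1}\big)\le [a]_{A^s_2}$ gives
$$
\Big\|v_P - v_{F_P}\Big\|_{L^2_a(P)}
\le [a]_{A^s_2}^{1/2}\,(y_1-y_0)\Big\|\tfrac{\partial v}{\partial y}\Big\|_{L^2_a(P)}.
$$
Combining this with the Poincar\'e inequality for $\|v-v_P\|_{L^2_a(P)}$ via the triangle inequality gives \eqref{promedio cero en cara de K por I segunda}; the case $j=1$ is identical, integrating from $y_1$ instead.

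The only point requiring a little care — the "main obstacle," such as it is — is justifying the anisotropic prismatic Poincar\'e inequality \eqref{Poincare anisotropica en prismas} itself, but this is already available from the proof of the previous lemma, where it is obtained from the cube case \eqref{promedio cero} by a diagonal (component-wise in $x$) scaling together with the stability of the $A^s_2$ class under such scalings. Everything else is a routine repetition of the Schwarz-inequality manipulation used twice already (in Lemmas \ref{sobre caras} and \ref{sobre caras en prismas}); in particular, no trace theorem is needed here because $F_P$ is a face transverse to the $y$-direction and the correction term is handled directly by the fundamental theorem of calculus rather than by integration by parts producing a boundary term.
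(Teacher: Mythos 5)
Your proof is correct and follows essentially the same route as the paper, whose entire proof of this lemma is the remark that it is ``analogous to the proof of Lemma \ref{sobre caras}'': split $v-v_{F_P}=(v-v_P)+(v_P-v_{F_P})$, control the first piece by the anisotropic Poincar\'e inequality \eqref{Poincare anisotropica en prismas}, and the constant correction by Cauchy--Schwarz together with the $A^s_2$ condition. Your fundamental-theorem-of-calculus representation, once averaged over $P$, reproduces exactly the same correction term $\frac1{|P|}\int_P(y_1-t)\partial_y v$ that the paper's integration by parts against $(y-y_1)$ yields (note only that the bound on ``the double integral'' should carry the factor $(y_1-y_0)$, which you correctly reinstate in the final displayed estimate).
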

\begin{proof}
It is analogous to the proof of Lemma \ref{sobre caras}.
\end{proof}

The local Raviart-Thomas space for $P=K\times[y_0,y_1]$ is given by
$$
\RT_0(P)=\{\bt\,:\,
\bt(x)=(a_1+bx_1,\cdots,a_n+bx_n,a_{n+1}+cy)\ \ \mbox{with} \ \ a_i,b,c\in{\mathbb R}\}.
$$
Given a vector field $\bs$ we define $\bst=(\sigma_1,\cdots,\sigma_n)$
and write $\bs=(\bst,\sigma_{n+1})$.
Since the normals to the top and bottom faces of $P$ are orthogonal to the other ones,
the Raviart-Thomas interpolation can be written as
$$
\Pi_P\bs=(\Pi_K\bst,\Pi_{n+1}\sigma_{n+1})
$$
where $\Pi_K$ and $\Pi_{n+1}$ depend on $\bst$ and $\sigma_{n+1}$ respectively.
Indeed, they are defined by
\begin{equation*}
\label{def de pi en prismas 1}
\int_{F\times[y_0,y_1]} \Pi_K\bst\cdot\bn_F =\int_{F\times[y_0,y_1]}\bst\cdot\bn_F
\end{equation*}
for all face $F$ of $K$ and
\begin{equation*}
\label{def de pi en prismas 2}
\int_{K\times\{y_j\}} \Pi_{n+1}\sigma_{n+1}=\int_{K\times\{y_j\}}\sigma_{n+1}
\end{equation*}
for $j=1,2$.

\begin{lemma}
\label{errorinterp prismas}
For $a\in A^s_2$ and $P=K\times[y_0,y_1]$, we have
\begin{equation}
\label{RT error prismas}
\|\bst-\Pi_K\bst\|_{L^2_a(P)}
\le C\left\{\left\|(y-y_0)\frac{\partial\bst}{\partial y}\right\|_{L^2_a(P)}
+h_K\left\|D_x\bst\right\|_{L^2_a(P)}\right\}
\end{equation}
and
\begin{equation}
\label{RT error prismas 2}
\begin{aligned}
\|\sigma_{n+1}&-\Pi_{n+1}\sigma_{n+1}\|_{L^2_a(P)}\\
&\le C\left\{(y_1-y_0)\left\|\frac{\partial\sigma_{n+1}}{\partial y}\right\|_{L^2_a(P)}
+h_K\left\|\nabla_x\sigma_{n+1}\right\|_{L^2_a(P)}\right\}
\end{aligned}
\end{equation}
where $C$ depends only on $a$ and the regularity constant $\eta$.
\end{lemma}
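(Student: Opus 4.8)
The plan is to transcribe the argument of Lemma \ref{errorinterp} to the prismatic setting, treating the two blocks of the splitting $\Pi_P\bs=(\Pi_K\bst,\Pi_{n+1}\sigma_{n+1})$ separately: Lemma \ref{sobre caras en prismas} replaces Lemma \ref{sobre caras} for the tangential part $\Pi_K\bst$, and Lemma \ref{sobre caras de arriba y abajo en prismas} plays the same role for the vertical part $\Pi_{n+1}\sigma_{n+1}$. In both cases the scheme is: select a face on which the interpolation error has vanishing mean, apply the corresponding anisotropic Poincar\'e estimate, and then control the surviving derivatives of the interpolant by derivatives of the original field in the weighted norm.

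\emph{First estimate.} I would first pick $n$ of the $n+1$ faces of $K$, say $F_1,\dots,F_n$, with unit normals $\bn_1,\dots,\bn_n$; by the regularity of the simplices these are uniformly linearly independent, so $|\bst-\Pi_K\bst|\le C\sum_{i=1}^n|(\bst-\Pi_K\bst)\cdot\bn_i|$ with $C=C(n,\eta)$, and it suffices to bound each term. From the definition of $\Pi_K$ we have $\int_{F_i\times[y_0,y_1]}(\bst-\Pi_K\bst)\cdot\bn_i=0$, so $(\bst-\Pi_K\bst)\cdot\bn_i$ has vanishing mean on the lateral face $F_i\times[y_0,y_1]$ and Lemma \ref{sobre caras en prismas} applies to it. In the resulting bound the $\partial/\partial y$ term is exactly $\|(y-y_0)\,\partial_y\bst\cdot\bn_i\|_{L^2_a(P)}\le\|(y-y_0)\,\partial_y\bst\|_{L^2_a(P)}$, because $\Pi_K\bst$ does not depend on $y$. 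For the $\nabla_x$ term I would use that each $(\Pi_K\bst)_j$ is affine of the form $a_j+bx_j$, so $\nabla_x(\Pi_K\bst\cdot\bn_i)$ is a constant vector of modulus $|b|=\tfrac1n|\mbox{div\,}_x\Pi_K\bst|$; applying the divergence theorem on $P$, the lateral-face moments of $\bst$ reproduce $\int_P\mbox{div\,}_x\bst$ once the top and bottom moments of $\sigma_{n+1}$ are accounted for (they equal $\int_P\partial_y\sigma_{n+1}$), whence $\mbox{div\,}_x\Pi_K\bst=\tfrac1{|P|}\int_P\mbox{div\,}_x\bst$, the $P$-average of $\mbox{div\,}_x\bst$. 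Its $L^2_a$-stability (see below) gives $\|\nabla_x(\Pi_K\bst\cdot\bn_i)\|_{L^2_a(P)}\le C\|D_x\bst\|_{L^2_a(P)}$, and since $\|\nabla_x(\bst\cdot\bn_i)\|_{L^2_a(P)}\le\|D_x\bst\|_{L^2_a(P)}$ as well, summing over $i$ yields \eqref{RT error prismas}.

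\emph{Second estimate.} Here $\Pi_{n+1}\sigma_{n+1}=a_{n+1}+cy$ depends only on $y$, and by construction $\int_{K\times\{y_0\}}(\sigma_{n+1}-\Pi_{n+1}\sigma_{n+1})=0$, so Lemma \ref{sobre caras de arriba y abajo en prismas} with $F_P=K\times\{y_0\}$ bounds the left-hand side by $C\{(y_1-y_0)\|\partial_y(\sigma_{n+1}-\Pi_{n+1}\sigma_{n+1})\|_{L^2_a(P)}+h_K\|\nabla_x(\sigma_{n+1}-\Pi_{n+1}\sigma_{n+1})\|_{L^2_a(P)}\}$. The last term equals $h_K\|\nabla_x\sigma_{n+1}\|_{L^2_a(P)}$ since $\nabla_x\Pi_{n+1}\sigma_{n+1}=0$, and matching the $K$-averages of $\sigma_{n+1}$ at $y=y_0$ and $y=y_1$ shows $\partial_y\Pi_{n+1}\sigma_{n+1}=c=\tfrac1{|P|}\int_P\partial_y\sigma_{n+1}$, again a $P$-average; its $L^2_a$-stability closes the estimate.

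The one auxiliary fact used throughout is that the $L^2$-average over the prism $P=K\times[y_0,y_1]$ is bounded on $L^2_a(P)$, namely $\bigl\|\tfrac1{|P|}\int_P w\bigr\|_{L^2_a(P)}\le C\|w\|_{L^2_a(P)}$. It follows by the Schwarz inequality exactly as the bound $\|P_Rv\|_{L^2_a(R)}\le[a]_{A^s_2}^{1/2}\|v\|_{L^2_a(R)}$ in Lemma \ref{errorinterp}, except that $P$ is not a rectangle; one encloses $P$ in its bounding box $R_P\supset P$, which by the regularity of $K$ satisfies $|R_P|\le C(n,\eta)\,|P|$, so the $A^s_2$ condition on $R_P$ gives $\bigl(\tfrac1{|P|}\int_P a\bigr)\bigl(\tfrac1{|P|}\int_P a^{-1}\bigr)\le C[a]_{A^s_2}$, the prismatic analogue of \eqref{A2 en K} already implicit in Lemmas \ref{sobre caras en prismas}--\ref{sobre caras de arriba y abajo en prismas}. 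I expect this bounding-box reduction, together with the bookkeeping of which derivatives of the two interpolants survive, to be the only mildly delicate points; everything else is a direct copy of the rectangular argument.
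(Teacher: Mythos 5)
Your proof is correct and follows essentially the same route as the paper: vanishing mean of the interpolation error on a lateral face (resp.\ on $K\times\{y_0\}$), the anisotropic Poincar\'e estimates of Lemmas \ref{sobre caras en prismas} and \ref{sobre caras de arriba y abajo en prismas}, and identification of the surviving derivative of the interpolant with the $P$-average of $\mbox{div\,}_x\bst$ (resp.\ of $\partial_y\sigma_{n+1}$), controlled by Cauchy--Schwarz. Your bounding-box argument for the weighted stability of the $P$-average is a point the paper leaves implicit, and it is handled correctly.
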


\begin{proof} Since $(\bst-\Pi_K\bst)\cdot\bn_F$ has vanishing mean value
on $F_P=F\times[y_0,y_1]$ we can apply \eqref{promedio cero en cara de K por I}
to obtain
$$
\begin{aligned}
&\|(\bst-\Pi_K\bst)\cdot\bn_F\|_{L^2_a(P)}\\
&\le C\left\{\left\|(y-y_0)\frac{\partial}{\partial y}[(\bst-\Pi_K\bst)\cdot\bn_F]\right\|_{L^2_a(P)}
+h_K\left\|\nabla_x(\bst-\Pi_K\bst)\cdot \bn_F)\right\|_{L^2_a(P)}\right\},
\end{aligned}
$$
and using this estimate for $n$ different faces of $K$ together with the regularity
assumption, we arrive at
$$
\begin{aligned}
&\|\bst-\Pi_K\bst\|_{L^2_a(P)}\\
&\le C\left\{\left\|(y-y_0)\frac{\partial}{\partial y}(\bst-\Pi_K\bst)\right\|_{L^2_a(P)}
+h_K\left\|D_x(\bst-\Pi_K\bst)\right\|_{L^2_a(P)}\right\}.
\end{aligned}
$$
But $\frac{\partial(\Pi_K\bst)}{\partial y}=0$ and $\frac{\partial(\Pi_K\bst)_i}{\partial x_j}=0$
for $i\ne j$. On the other hand,
$\frac{\partial(\Pi_K\bst)_i}{\partial x_i}=\frac{\mbox{div\,}_x \Pi_K\bst}{n}$ and
$\mbox{div\,}_x \Pi_K\bst=\frac1{|P|} \int_P\mbox{div\,}_x\bst$, and so, a simple argument using the
Cauchy-Schwarz inequality yields,
$$
\left\|\mbox{div\,}_x\Pi_K\bst\right\|_{L^2_a(P)}
\le [a]^{1/2}_{A^s_2}\left\|\mbox{div\,}_x\bst\right\|_{L^2_a(P)}
$$
and puting all together we obtain \eqref{RT error prismas}.

The proof of \eqref{RT error prismas 2} is analogous using now that
$\sigma_{n+1}-\Pi_{n+1}\sigma_{n+1}$ has vanishing mean value on the
face $K\times\{y_0\}$, applying \eqref{promedio cero en cara de K por I segunda},
and using that $\nabla_x(\Pi_{n+1}\sigma_{n+1})=0$
and $\frac{\partial}{\partial y}(\Pi_{n+1}\sigma_{n+1})=\frac1{|P|} \int_P\frac{\partial\sigma_{n+1}}{\partial y}$.
\end{proof}

\section{Fractional Laplacian}
\label{fraccionario}
\setcounter{equation}{0}

As an interesting application of the general results for degenerate problems we
consider the fractional Laplace equation. Given $\Omega\subset{\mathbb R}^n$ and
$f\in L^2(\Omega)$ we want to solve
\begin{equation}
\label{frac laplacian}
\left\{
\begin{array}{rl}
(-\Delta)^s v=f&\ \ \mbox{in} \ \Omega\\
v=0 & \ \ \mbox{on} \ \ \Omega^c
\end{array}
\right.
\end{equation}
for $0<s<1$.

Caffarelli and Silvestre have shown that the solution of this problem can be obtained
as $v(x)=u(x,0)$ where $u(x,y)$ is the solution
of a degenerate elliptic problem in a cylindrical domain in $n+1$ variables, namely,
\begin{equation}
\label{CS1}
\left\{
\begin{array}{rl}
\mbox{div\,}(y^\alpha \nabla u(x,y))=0 &\ \ \mbox{in} \  \mathcal{C}=\Omega\times (0,\infty)\\
-\lim_{y\to 0} y^\alpha\frac{\partial u}{\partial y} =d_s f& \ \ \mbox{on} \ \ \Gamma_N= \Omega\times\{0\}\\
u=0 & \ \ \mbox{on} \ \ \Gamma_D= \partial \mathcal{C}\setminus \Gamma_N
\end{array}
\right.
\end{equation}
with $d_s=2^{1-2s} \frac{\Gamma(1-s)}{\Gamma(s)}$ and $\alpha=1-2s$.
To solve this equation numerically one has to approximate
the domain $\mathcal{C}$ by a bounded one. With this goal we consider
a problem analogous to \eqref{CS1} with $\mathcal{C}$ replaced by
$\mathcal{C}_L=\Omega\times (0,L)$ and adding a homogeneous Dirichlet boundary condition
on the upper boundary of $\mathcal{C}_L$, namely, we look for $u_L$ such that,
\begin{equation}
\label{CS2}
\left\{
\begin{array}{rl}
\mbox{div\,}(y^\alpha \nabla u_L(x,y))=0 &\ \ \mbox{in} \  \mathcal{C}_L=\Omega\times (0,L)\\
-\lim_{y\to 0} y^\alpha\frac{\partial u_L}{\partial y} =f& \ \ \mbox{on} \ \ \Gamma_N= \Omega\times\{0\}\\
u_L=0 & \ \ \mbox{on} \ \ \Gamma_D= \partial\mathcal{C}_L\setminus \Gamma_N
\end{array}
\right.
\end{equation}
We will use several results proved in \cite{NOS}, therefore, we recall some
notation used in that paper. For $0<s<1$, we denote $H^s(\Omega)$ the fractional Sobolev space
of order $s$. We define for $s\neq\frac12$, $\mathbb{H}^s(\Omega):=H_0^s(\Omega)$,
the closure of $C_0^{\infty}(\Omega)$ in $H^s(\Omega)$ and
$\mathbb{H}^{1/2}(\Omega):=H_{00}^{1/2}(\Omega)$, the interpolation space $[H_0^1(\Omega),L^2(\Omega)]_{1/2}$
obtained by the K-method (for details see \cite{L}).
$\mathbb{H}^{-s}(\Omega)$ denotes the dual space of $\mathbb{H}^s(\Omega)$ for $s\in (0,1)$.

For our error estimates we will need some a priori bounds for the
derivatives of the exact solution.

In \cite{NOS} the following a priori estimates for the solution
of problem \eqref{CS1} were proved,
\begin{equation}
\label{derivadas primeras u}
\|\nabla u\|_{L^2_{y^{\alpha}}(\mathcal{C})}
\le C \|f\|_{{\mathbb H}^{-s}(\Omega)}
\end{equation}
and, for $1\le i,j\le n$,
\begin{equation}
\label{cotas derivadas segundas}
\left\|\frac{\partial^2 u}{\partial x_i \partial x_j}\right\|_{L^2_{y^\alpha}(\mathcal{C})}
+
\left\|\frac{\partial^2 u}{\partial x_i \partial y}\right\|_{L^2_{y^\alpha}(\mathcal{C})}
\le C\|f\|_{\mathbb{H}^{1-s}(\Omega)}.
\end{equation}

We will use the following estimate: For $\gamma>-1$ and
$v\in L^1(\mathcal{C}_L)\cap L^2_{y^{\gamma}}(\mathcal{C}_L)$ such that  $\int_{\mathcal{C}_L} v=0$,
there exists a constant $C$ independent of $L$ such that,
\begin{equation}
\label{improved en CL}
\|v\|_{L^2_{y^{\gamma}}(\mathcal{C}_L)}
\le C \|y\nabla v\|_{L^2_{y^{\gamma}}(\mathcal{C}_L)}.
\end{equation}
This estimate can be proved using the arguments introduced in \cite{DD}.
Details of the proof are given in \cite[Lemma 2.2]{DLP2} for a square domain
but the arguments apply to more general domains, in particular to the cylindrical
ones considered here.
That the constant C does not depend on $L$ follows from the case $L=1$
combined with a standard scaling argument.

\begin{lemma}
\label{cotas sigma}
Let $u$ be the solution of \eqref{CS1} and $\bs=(\sigma_1,\cdots,\sigma_{n+1})=-y^\alpha \nabla u$.
Then, for $1\le j\le n$ and $1\le i\le n+1$,
\begin{equation}
\label{cota cruzadas ij}
\left\|\frac{\partial\sigma_i}{\partial x_j}\right\| _{L_{y^{-\alpha}}^2(\mathcal{C}_L)}
+\left\|\frac{\partial\sigma_{n+1}}{\partial y}
\right\| _{L^2_{y^{-\alpha}}(\mathcal{C}_L)}
\le C \|f\|_{\mathbb{H}^{1-s}(\Omega)},
\end{equation}
and for $1\le i \le n$ and $\beta>1-\alpha$,
\begin{equation}
\left\| \frac{\partial\sigma_i}{\partial y}
\right\| _{L^2_{y^{-\alpha + \beta}}(\mathcal{C}_L)}
\le C L^{\beta/2} \|f\|_{\mathbb{H}^{1-s}(\Omega)}.
\end{equation}
\end{lemma}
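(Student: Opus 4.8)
The plan is to rewrite the components of $\bs$ in terms of the derivatives of $u$ and then reduce everything to the a priori bounds \eqref{derivadas primeras u}--\eqref{cotas derivadas segundas} and the improved Poincar\'e inequality \eqref{improved en CL}. Recall that $\sigma_i=-y^\alpha\frac{\partial u}{\partial x_i}$ for $1\le i\le n$ and $\sigma_{n+1}=-y^\alpha\frac{\partial u}{\partial y}$, and that multiplying the weight $y^{-\alpha}$ by $(y^\alpha)^2$ gives back $y^\alpha$; this simple observation is what links $L^2_{y^{-\alpha}}$ norms of derivatives of $\bs$ to $L^2_{y^\alpha}$ norms of derivatives of $u$.

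For \eqref{cota cruzadas ij}: if $1\le j\le n$ and $1\le i\le n$ then $\frac{\partial\sigma_i}{\partial x_j}=-y^\alpha\frac{\partial^2 u}{\partial x_i\partial x_j}$, hence
$$
\left\|\frac{\partial\sigma_i}{\partial x_j}\right\|_{L^2_{y^{-\alpha}}(\mathcal{C}_L)}
=\left\|\frac{\partial^2 u}{\partial x_i\partial x_j}\right\|_{L^2_{y^{\alpha}}(\mathcal{C}_L)}
\le C\|f\|_{\mathbb{H}^{1-s}(\Omega)}
$$
by \eqref{cotas derivadas segundas} (using $\mathcal{C}_L\subset\mathcal{C}$); taking $i=n+1$ gives in the same way $\|\frac{\partial\sigma_{n+1}}{\partial x_j}\|_{L^2_{y^{-\alpha}}(\mathcal{C}_L)}=\|\frac{\partial^2 u}{\partial x_j\partial y}\|_{L^2_{y^\alpha}(\mathcal{C}_L)}\le C\|f\|_{\mathbb{H}^{1-s}(\Omega)}$. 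Finally, since $\mbox{div\,}\bs=-\mbox{div\,}(y^\alpha\nabla u)=0$ by \eqref{CS1}, we have $\frac{\partial\sigma_{n+1}}{\partial y}=-\sum_{j=1}^n\frac{\partial\sigma_j}{\partial x_j}$, so $\|\frac{\partial\sigma_{n+1}}{\partial y}\|_{L^2_{y^{-\alpha}}(\mathcal{C}_L)}\le\sum_{j=1}^n\|\frac{\partial^2 u}{\partial x_j^2}\|_{L^2_{y^\alpha}(\mathcal{C}_L)}\le C\|f\|_{\mathbb{H}^{1-s}(\Omega)}$, again by \eqref{cotas derivadas segundas}. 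Adding these bounds gives \eqref{cota cruzadas ij}.

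For the bound on $\frac{\partial\sigma_i}{\partial y}$ ($1\le i\le n$, $\beta>1-\alpha$, note $1-\alpha=2s>0$) put $w=\frac{\partial u}{\partial x_i}$, so $\frac{\partial\sigma_i}{\partial y}=-\alpha\,y^{\alpha-1}w-y^\alpha\frac{\partial w}{\partial y}$, and treat the two terms separately. For the second, since $y\le L$ on $\mathcal{C}_L$,
$$
\left\|y^\alpha\frac{\partial w}{\partial y}\right\|_{L^2_{y^{-\alpha+\beta}}(\mathcal{C}_L)}^2
=\int_{\mathcal{C}_L}y^{\alpha+\beta}\left|\frac{\partial^2 u}{\partial x_i\partial y}\right|^2
\le L^\beta\int_{\mathcal{C}_L}y^{\alpha}\left|\frac{\partial^2 u}{\partial x_i\partial y}\right|^2
\le CL^\beta\|f\|_{\mathbb{H}^{1-s}(\Omega)}^2
$$
by \eqref{cotas derivadas segundas}. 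For the first term, note that $\|y^{\alpha-1}w\|_{L^2_{y^{-\alpha+\beta}}(\mathcal{C}_L)}=\|w\|_{L^2_{y^\gamma}(\mathcal{C}_L)}$ with $\gamma:=\alpha+\beta-2$, and that the hypothesis $\beta>1-\alpha$ is exactly what yields $\gamma>-1$. Moreover $\int_{\mathcal{C}_L}w\,dx\,dy=\int_0^L\int_{\partial\Omega}u\,\nu_i\,dS\,dy=0$ because $u$ vanishes on $\Gamma_D$, in particular on the lateral boundary $\partial\Omega\times(0,L)$, so \eqref{improved en CL} applies to $w$; combining it with $y\le L$ once more,
$$
\|w\|_{L^2_{y^\gamma}(\mathcal{C}_L)}^2
\le C\int_{\mathcal{C}_L}y^{\gamma+2}|\nabla w|^2
=C\int_{\mathcal{C}_L}y^{\alpha+\beta}|\nabla w|^2
\le CL^\beta\int_{\mathcal{C}_L}y^{\alpha}|\nabla w|^2
\le CL^\beta\|f\|_{\mathbb{H}^{1-s}(\Omega)}^2,
$$
the last step again by \eqref{cotas derivadas segundas} applied to $|\nabla w|^2=\sum_{j=1}^n\big|\frac{\partial^2 u}{\partial x_i\partial x_j}\big|^2+\big|\frac{\partial^2 u}{\partial x_i\partial y}\big|^2$; here the $L$--independence of the constant in \eqref{improved en CL} is what preserves the $L^{\beta/2}$ scaling. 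The triangle inequality then produces the claimed bound on $\frac{\partial\sigma_i}{\partial y}$.

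The main point requiring care is thus the application of \eqref{improved en CL} to $w=\partial u/\partial x_i$: besides the zero--mean property just observed, one needs $w\in L^1(\mathcal{C}_L)\cap L^2_{y^\gamma}(\mathcal{C}_L)$. The $L^1$ membership follows from \eqref{derivadas primeras u} and $\int_0^L y^{-\alpha}\,dy<\infty$ (valid since $\alpha<1$), while the $L^2_{y^\gamma}$ membership is automatic once the right--hand side of the last display is seen to be finite, via the density argument underlying \eqref{improved en CL}. Everything else is a routine bookkeeping of weights together with the identities $\mbox{div\,}\bs=0$ and $\frac{\partial(\Pi\text{-free})}{\partial y}$ that we have used above.
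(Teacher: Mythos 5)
Your proposal is correct and follows essentially the same route as the paper: reduce the mixed derivatives to \eqref{cotas derivadas segundas} via the weight cancellation $y^{-\alpha}(y^{\alpha})^2=y^{\alpha}$, use the equation (equivalently $\mbox{div\,}\bs=0$) to handle $\partial\sigma_{n+1}/\partial y$, and split $\partial\sigma_i/\partial y=-\alpha y^{\alpha-1}\partial u/\partial x_i-y^{\alpha}\partial^2u/\partial x_i\partial y$, treating the singular term with the improved Poincar\'e inequality \eqref{improved en CL} for $\gamma=\alpha-2+\beta>-1$ after checking the zero-mean property from the lateral Dirichlet condition. Your added remarks on the $L^1\cap L^2_{y^\gamma}$ membership needed to invoke \eqref{improved en CL} are a welcome extra level of care not spelled out in the paper.
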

\begin{proof}
The bound for the first term in \eqref{cota cruzadas ij} follows immediately
from \eqref{cotas derivadas segundas}.
To estimate the second term observe that, from
(\ref{CS1}),
$$
\frac{ \partial\sigma_{n+1}}{\partial y}=-y^{\alpha}\Delta_xu
$$
and use \eqref{cotas derivadas segundas}.

For $1 \le i \le n$ we have
$$
\frac{ \partial\sigma_i}{\partial y}=-\alpha y^{\alpha-1}\frac{\partial u}{\partial x_i}
-y^{\alpha}\frac{\partial^2 u}{ \partial x_i \partial y}.
$$
To bound the second term
we use again \eqref{cotas derivadas segundas}.
For the first one we observe that  $\int_{\mathcal{C}_L}\frac{\partial u}{\partial x_i}=0$
because $u$ vanishes on $\partial\Omega\times (0,\infty)$, and therefore, since
$\beta>1-\alpha$ we can use
\eqref{improved en CL}  with $\gamma=\alpha-2+\beta$ to obtain
$$
\begin{aligned}
\int_{\mathcal{C}_L}\left| y^{\alpha-1}\frac{\partial u}{\partial x_i}\right| ^2 y^{-\alpha}y^{\beta}
&= \int_{\mathcal{C}_L}\left| \frac{\partial u}{\partial x_i}\right| ^2 y^{\alpha-2+\beta}
\le C \int_{\mathcal{C}_L} \left|\nabla\left( \frac{\partial u}{\partial x_i}\right) \right|^2
y^{\alpha+\beta}\\
&\le C L^{\beta}\int_{\mathcal{C}_L}\left|\nabla\left( \frac{\partial u}{\partial x_i}\right) \right|^2 y^{\alpha}
\le C L^{\beta}\|f\|^2_{\mathbb{H}^{1-s}(\Omega)}
\end{aligned}
$$
where we have used \eqref{cotas derivadas segundas} for the last inequality.\end{proof}

Our goal is to approximate $u$ and $\bs=-y^\alpha\nabla u$ given by \eqref{CS1}.
Since the problem is posed in the unbounded domain $\mathcal{C}$ we need to replace it
by $\mathcal{C}_L$ where $L$ will be chosen in terms of the mesh parameter $h$ in such a way
that $L\to\infty$ when $h\to 0$.

It was shown in \cite[Theorem 3.5]{NOS} that
for $f\in \mathbb{H}^{-s}(\Omega)$ and $L\ge 1$, if $u_L(x,y)$ is extended by zero
for $y>L$, there exists a constant $C$ such that
\begin{equation}
\label{exponencial}
\|\nabla(u-u_L)\|_{L_{y^\alpha}^2(\mathcal{C})}
\le C e^{-\sqrt{\lambda_1}L/4}\|f\|_{\mathbb{H}^{-s}(\Omega)}
\end{equation}
where $\lambda_1>0$ is the first eigenvalue of the Laplacian with Dirichlet boundary conditions in $\Omega$.

Moreover, using the Poincar\'e inequality
\begin{equation}
\label{poincare en el cilindro}
\|u-u_L\|_{L_{y^\alpha}^2(\mathcal{C})}
\le C \|\nabla(u-u_L)\|_{L_{y^\alpha}^2(\mathcal{C})},
\end{equation}
which follows easily applying the standard Poincar\'e inequality
in $\Omega$ for each $y$, multiplying by the weight, and integrating in $y$,
we also have
\begin{equation}
\label{exponencial 2}
\|u-u_L\|_{H_{y^\alpha}^1(\mathcal{C})}
\le C e^{-\sqrt{\lambda_1}L/4}\|f\|_{\mathbb{H}^{-s}(\Omega)}.
\end{equation}

Now we consider the mixed finite element approximation of \eqref{CS2}.
We will apply the results of the previous sections for ${\mathcal D}=\mathcal{C}_L$ and $\Gamma_N=\Omega\times\{0\}$.
However, since we want error estimates in terms of $\bs$ instead of $\bs_L$, to take advantage of the known
a priori estimates, we need to introduce some minor modifications in the error analysis.

Given a family of meshes $\cth$ made by prismatic elements as those considered in
the last part of Section \ref{RT anisotropicos} and the associated spaces
${\pmb S}_h$ and $V_h$ defined as in \eqref{Raviart-Thomas} and \eqref{Constantes a trozos},
the approximate solutions $u_{L,h}\in V_h$
and $\bs_{L,h} \in{\pmb S}_h$ are given by,
\begin{equation}
\label{condicion de Neumann debil truncado}
\bs_{L,h}\cdot\bn|_F=\frac1{|F|}\int_F f ,
\end{equation}
for every face $F$ contained in $\Omega$,
and
\begin{equation}
\label{debil truncado}
\left\{
\begin{aligned}
\int_{\mathcal{C}_L} y^{-\alpha}\,\bs_{L,h}\cdot\bt - \int_{\Omega} u_{L,h}\, \mbox{div\,}\bt
= 0
&\qquad\qquad\quad \forall \bt \in {\pmb S}_{h,N} \\
\int_{\mathcal{C}_L} v\,\mbox{div\,}\bs_{L,h}
= 0
&\qquad\qquad\quad\forall v\in V_h
\end{aligned}
\right.
\end{equation}
where ${\pmb S}_{h,N}:={\pmb S}_h\cap\hdivN$.

\begin{theorem}
\label{sigma L} Let $u$ and $u_L$ be the solutions of (\ref{CS1}) and (\ref{CS2})
respectively, $\bs=-y^\alpha\nabla u$ and $\bs_L=-y^\alpha\nabla u_L$.
If $u_{L,h}$ and $\bs_{L,h}$ are the approximate solutions given by \eqref{debil truncado}, then
\begin{equation}
\label{cota sigma menos sigma Lh}
\|\bs-\bs_{L,h}\|_{L_{y^{-\alpha}}^2(\mathcal{C}_L)}
\le
\|\bs-\Pi_h \bs\|_{L_{y^{-\alpha}}^2(\mathcal{C}_L)}+ \|\bs-\bs_L\|_{L_{y^{-\alpha}}^2(\mathcal{C}_L)},
\end{equation}
and
\begin{equation}
\begin{aligned}
\label{cota u menos u Lh}
\|u-u_{L,h}\|_{L_{y^{\alpha}}^2(\mathcal{C}_L)}
&\le C\|u-P_hu\|_{L_{y^{\alpha}}^2(\mathcal{C}_L)}\\
&+CL\Big\{\|\bs-\Pi_h \bs\|_{L_{y^{-\alpha}}^2(\mathcal{C}_L)}
+ \|\bs-\bs_L\|_{L_{y^{-\alpha}}^2(\mathcal{C}_L)}\Big\}.
\end{aligned}
\end{equation}
\end{theorem}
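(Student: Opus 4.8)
The plan is to rerun the proofs of Lemmas \ref{error sigma} and \ref{lemacotap} on $\mathcal{D}=\mathcal{C}_L$ with weight $a=y^\alpha$, but carrying along the exact solution $\bs=-y^\alpha\nabla u$ of the untruncated problem instead of $\bs_L$; the only properties of $\bs$ that will be used are $\mbox{div\,}\bs=0$ in $\mathcal{C}_L$ and that $\bs\cdot\bn$ agrees with the Neumann data on $\Gamma_N$. First note that $\alpha=1-2s\in(-1,1)$, so $y^\alpha$ and $y^{-\alpha}$ are both Muckenhoupt weights (in fact in $A^s_2$), hence admissible for all the results of Sections \ref{mixed approximations} and \ref{RT}. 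Observe that $(\bs_L,u_L)$ solves the weak mixed formulation on $\mathcal{C}_L$ with $g=0$, that $(\bs_{L,h},u_{L,h})$ given by \eqref{debil truncado} is exactly its mixed finite element approximation, and that by \eqref{def de pi} the discrete data \eqref{condicion de Neumann debil truncado} coincides on $\Gamma_N$ with $\Pi_h\bs\cdot\bn$ (as well as $\Pi_h\bs_L\cdot\bn$). Since $\mbox{div\,}\bs=0$, \eqref{propiedad conmutativa} gives $\mbox{div\,}\Pi_h\bs=P_h\mbox{div\,}\bs=0$, and taking $v=\mbox{div\,}\bs_{L,h}\in V_h$ in \eqref{debil truncado} gives $\mbox{div\,}\bs_{L,h}=0$; hence $\bt_0:=\Pi_h\bs-\bs_{L,h}\in{\pmb S}_{h,N}$ is divergence free, and subtracting the first equations of \eqref{debil truncado} and of the weak formulation for $(\bs_L,u_L)$ tested against $\bt_0$ yields the orthogonality $\int_{\mathcal{C}_L}y^{-\alpha}(\bs_L-\bs_{L,h})\cdot\bt_0=0$. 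Moreover the argument of Lemma \ref{error sigma} shows, more generally, that $\|\bs_L-\bs_{L,h}\|_{L^2_{y^{-\alpha}}}\le\|\bs_L-\bt_h\|_{L^2_{y^{-\alpha}}}$ for every $\bt_h\in{\pmb S}_h$ with $\mbox{div\,}\bt_h=0$ and $\bt_h\cdot\bn=\bs_{L,h}\cdot\bn$ on $\Gamma_N$; taking $\bt_h=\Pi_h\bs$ and the triangle inequality gives $\|\bs_L-\bs_{L,h}\|_{L^2_{y^{-\alpha}}}\le\|\bs-\bs_L\|_{L^2_{y^{-\alpha}}}+\|\bs-\Pi_h\bs\|_{L^2_{y^{-\alpha}}}$.

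To prove \eqref{cota sigma menos sigma Lh} I would expand $\|\bs-\bs_{L,h}\|^2_{L^2_{y^{-\alpha}}}$ using $\bs-\bs_{L,h}=(\bs-\Pi_h\bs)+\bt_0$ and $\bt_0=-(\bs-\Pi_h\bs)+(\bs-\bs_{L,h})$, together with the orthogonality above, to obtain
\[
\|\bs-\bs_{L,h}\|^2_{L^2_{y^{-\alpha}}}
=\int_{\mathcal{C}_L}y^{-\alpha}(\bs_L-\bs_{L,h})\cdot(\bs-\Pi_h\bs)
+\int_{\mathcal{C}_L}y^{-\alpha}(\bs-\bs_L)\cdot(\bs-\bs_{L,h}).
\]
Writing $X=\|\bs-\bs_{L,h}\|_{L^2_{y^{-\alpha}}}$, $A=\|\bs-\Pi_h\bs\|_{L^2_{y^{-\alpha}}}$, $B=\|\bs-\bs_L\|_{L^2_{y^{-\alpha}}}$, the Cauchy--Schwarz inequality together with the bound on $\|\bs_L-\bs_{L,h}\|_{L^2_{y^{-\alpha}}}$ obtained above turns this into $X^2\le(A+B)A+BX$; since the discriminant of $X^2-BX-(A^2+AB)$ equals $(2A+B)^2$, this forces $X\le A+B$, which is precisely \eqref{cota sigma menos sigma Lh}.

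For \eqref{cota u menos u Lh}, by the triangle inequality it suffices to bound $\|P_hu-u_{L,h}\|_{L^2_{y^\alpha}}$, and here I would follow the duality argument of Lemma \ref{lemacotap}. Applying Lemma \ref{inversa de la div} on $\mathcal{C}_L$ with weight $y^\alpha$ produces $\bt\in H^1_{y^{-\alpha}}(\mathcal{C}_L)^{n+1}\cap\hdivN$ with $\mbox{div\,}\bt=(P_hu-u_{L,h})\,y^\alpha$ and $\|\bt\|_{H^1_{y^{-\alpha}}(\mathcal{C}_L)}\le C_L\|P_hu-u_{L,h}\|_{L^2_{y^\alpha}(\mathcal{C}_L)}$; crucially, in the construction in the proof of that lemma one takes the enlarged domain of the form $\widetilde\Omega\times(0,L)$ with $\overline\Omega\subset\widetilde\Omega$, so that $\bt$ vanishes not only on $\Gamma_N$ but also on the top face $\Omega\times\{L\}$. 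Then, using \eqref{prop fundamental}, the self-adjointness of $P_h$, and $\mbox{div\,}\Pi_h\bt=P_h\mbox{div\,}\bt\in V_h$, one gets $\|P_hu-u_{L,h}\|^2_{L^2_{y^\alpha}}=\int_{\mathcal{C}_L}(u-u_{L,h})\,\mbox{div\,}\Pi_h\bt$. Integrating by parts against $\bs=-y^\alpha\nabla u$, the boundary contributions vanish ($u=0$ on $\partial\Omega\times[0,L]$, and $\Pi_h\bt\cdot\bn=0$ on $\Gamma_N$ and, by the choice of $\bt$, on $\Omega\times\{L\}$), and combining with $\int_{\mathcal{C}_L}u_{L,h}\,\mbox{div\,}\Pi_h\bt=\int_{\mathcal{C}_L}y^{-\alpha}\bs_{L,h}\cdot\Pi_h\bt$ from the discrete equation with the test field $\Pi_h\bt\in{\pmb S}_{h,N}$ yields $\|P_hu-u_{L,h}\|^2_{L^2_{y^\alpha}}=\int_{\mathcal{C}_L}y^{-\alpha}(\bs-\bs_{L,h})\cdot\Pi_h\bt$. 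Cauchy--Schwarz, the stability bound \eqref{prop4}, and the estimate for $\|\bt\|_{H^1_{y^{-\alpha}}}$ give $\|P_hu-u_{L,h}\|_{L^2_{y^\alpha}}\le C\,C_L\,\|\bs-\bs_{L,h}\|_{L^2_{y^{-\alpha}}}$, and then \eqref{cota u menos u Lh} follows from \eqref{cota sigma menos sigma Lh} and $C_L\le CL$ for $L\ge1$.

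The step I expect to be the main obstacle is the linear-in-$L$ control of the constants. One must verify that the constant $C_L$ of Lemma \ref{inversa de la div} on the long cylinder $\mathcal{C}_L$ is $O(L)$: as in the unweighted case this should follow from a scaling argument reducing $\mathcal{C}_L$ to $\mathcal{C}_1$ ($\mathcal{C}_L$ being a John domain with John constant of order $L$), but one has to track how the weight $y^\alpha$ behaves under the anisotropic dilation in $y$. One must also check that extending $\phi$ by a constant over $(\widetilde\Omega\setminus\Omega)\times(0,L)$ costs only an $L$-independent factor in the $L^2_{y^{-\alpha}}$-norm, which holds because $\int_{(\widetilde\Omega\setminus\Omega)\times(0,L)}y^{-\alpha}$ is of order $L^{1-\alpha}$ against a normalization of order $L^{1+\alpha}$. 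A secondary point requiring care is the legitimacy of the weighted integration by parts near the degenerate face $\Omega\times\{0\}$, which is exactly where the vanishing of $\Pi_h\bt\cdot\bn$ on $\Gamma_N$ is used.
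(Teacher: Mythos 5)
Your argument is correct, and for the scalar estimate it takes a genuinely different route from the paper. For \eqref{cota sigma menos sigma Lh} both proofs rest on the same Galerkin orthogonality $\int_{\mathcal{C}_L} y^{-\alpha}(\bs_L-\bs_{L,h})\cdot(\Pi_h\bs-\bs_{L,h})=0$; the paper deduces $\|\bs_L-\bs_{L,h}\|\le\|\bs_L-\Pi_h\bs\|$ and then invokes ``a triangular inequality,'' which taken literally yields $\|\bs-\bs_{L,h}\|\le\|\bs-\Pi_h\bs\|+2\|\bs-\bs_L\|$; your quadratic-inequality manipulation (discriminant $(2A+B)^2$) actually recovers the stated constant $1$, a small but genuine sharpening, immaterial for Theorem \ref{teorema 5.3}. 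For \eqref{cota u menos u Lh} the paper stays entirely with the truncated pair: it applies Lemma \ref{error u} on $\mathcal{C}_L$ to get $\|u_L-u_{L,h}\|\le\|u_L-P_hu_L\|+CL\|\bs_L-\bs_{L,h}\|$, and then trades $u_L$ for $u$ via $\|u_L-P_hu_L\|\le\|u-P_hu\|+C\|\nabla(u-u_L)\|$ (the anisotropic Poincar\'e inequality on prisms) together with $\|u-u_L\|\le C\|\bs-\bs_L\|$ from \eqref{poincare en el cilindro}. You instead run the duality argument directly on $P_hu-u_{L,h}$ against the untruncated $\bs$, which forces you to strengthen Lemma \ref{inversa de la div} so that $\bt$ vanishes also on $\Omega\times\{L\}$ --- otherwise the boundary term $\int_{\Omega\times\{L\}}u\,\Pi_h\bt\cdot\bn$ survives, since $u$ does not vanish on the top face; your choice $\tO=\widetilde\Omega\times(0,L)$ legitimately extracts this from the paper's own construction, and your bookkeeping of the cost of the extension ($L^{1+\alpha}\cdot L^{1-\alpha}/L^2=O(1)$) is right. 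What each approach buys: the paper's route uses the lemmas exactly as stated but needs the extra Poincar\'e manipulations to pass from $u$ to $u_L$; yours avoids those at the price of the strengthened divergence lemma and the verification that $\Pi_h\bt\cdot\bn=0$ on the top. Both proofs lean equally on the claim, justified only by a scaling remark, that the constant in Lemma \ref{inversa de la div} on $\mathcal{C}_L$ is $O(L)$, which you rightly single out as the delicate point.
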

\begin{proof}
Observing that $\Pi_h\bs-\bs_{L,h}\in{\pmb S}_{h,N}$ and $\mbox{div\,}(\Pi_h\bs-\bs_{L,h})=0$
and proceeding as in the proof of Lemma \ref{error sigma} we obtain,
$$
\int_{\mathcal{C}_L} y^{-\alpha} (\bs_L-\bs_{L,h})\cdot(\Pi_h \bs- \bs_{L,h})=0.
$$
Then,
$$
\|\bs_L- \bs_{L,h}\|^2_{L_{y^{-\alpha}}^2(\mathcal{C}_L)}
=\int_{\mathcal{C}_L} y^{-\alpha} (\bs_L- \bs_{L,h})\cdot(\bs_L-\Pi_h\bs),
$$
and therefore,
\begin{equation}
\label{sigmaL-sigmaLh}
\|\bs_L- \bs_{L,h}\|_{L_{y^{-\alpha}}(\mathcal{C}_L)}
\le\|\bs_L-\Pi_h\bs\|_{L_{y^{-\alpha}}(\mathcal{C}_L)},
\end{equation}
which combined with a triangular inequality yields \eqref{cota sigma menos sigma Lh}.

On the other hand, for our domain $\mathcal{C}_L$
the inequality from Lemma \ref{error u} can be written as
\begin{equation}
\label{uL-uLh}
\|u_L-u_{L,h}\|_{L_{y^{\alpha}}^2(\mathcal{C}_L)}
\le \|u_L-P_hu_L\|_{L_{y^{\alpha}}^2(\mathcal{C}_L)}
+ C L\|\bs_L-\bs_{L,h}\|_{L_{y^{-\alpha}}^2(\mathcal{C}_L)}
\end{equation}
where the constant $C$ is independent of $L$. Indeed, this follows
from the proof of that lemma once we know that the constant
in Lemma \ref{inversa de la div} is proportional to $L$, which follows
from the case $L=1$ and a scaling argument.

To bound the second term in the right hand side of \eqref{uL-uLh}
we use \eqref{sigmaL-sigmaLh}, while for the first one we have
$$
\begin{aligned}
\|u_L-P_hu_L\|_{L_{y^{\alpha}}^2(\mathcal{C}_L)}
&\le \|u-P_hu\|_{L_{y^{\alpha}}^2(\mathcal{C}_L)}
+\|(u-u_L)-P_h(u-u_L)\|_{L_{y^{\alpha}}^2(\mathcal{C}_L)}\\
&\le \|u-P_hu\|_{L_{y^{\alpha}}^2(\mathcal{C}_L)}
+ C \|\nabla(u-u_L)\|_{L_{y^{\alpha}}^2(\mathcal{C}_L)}
\end{aligned}
$$
where in the last inequality we have used the version for prisms of
\eqref{promedio cero anisotropica}. To conclude the proof we observe that
$$
\|\nabla(u-u_L)\|_{L_{y^{\alpha}}^2(\mathcal{C}_L)}
=\|\bs-\bs_L\|_{L_{y^{-\alpha}}^2(\mathcal{C}_L)}
$$
and, therefore, from the Poincar\'e inequality \eqref{poincare en el cilindro} we obtain
$$
\|u-u_L\|_{L_{y^{\alpha}}^2(\mathcal{C}_L)}
\le C\|\bs-\bs_L\|_{L_{y^{-\alpha}}^2(\mathcal{C}_L)}.
$$
\end{proof}

Next we are going to show that introducing appropriate meshes, graded in the $y$-direction,
we obtain almost optimal order of convergence with respect to the number of nodes, i. e.,
the same order than that valid for problems with smooth solutions using uniform meshes,
up to a logarithmic factor.

Given a mesh-size $h>0$, to define $\cth$ we start with a quasi-uniform triangulation of $\Omega$ made of simplices
of diameter less than or equal to $h$. Then, for $L\ge 1$ to be chosen below in terms
of $h$, we introduce a partition of $[0,L]$ given by
\begin{equation}\label{malla_y}
y_j=\left(\frac{j}{N} \right)^{\frac2{2-\beta}} L, \qquad j=0,\cdots, N
\end{equation}
where $N\sim 1/h$ (we take $N=1/h$ if it is an integer or some approximation of it if not), and $\beta\in(1-\alpha,2)$ to be chosen (in the numerical experiments we have taken $\beta$ as the midpoint of this interval).
Finally, the partition $\cth$ of $\mathcal{C}_L$ is formed by the prismatic elements
$P=K\times[y_j,y_{j+1}]$, where $K$ are the elements in the partition of $\Omega$.

It follows from this definition that, for $j\ge 1$,
\begin{equation}
\label{yj+1-yj}
(y_{j+1}-y_j)^2\le C_\beta h^2 y_j^\beta L^{2-\beta},
\end{equation}
indeed, by the mean value theorem and using that $h\sim 1/N$
we have
$$
y_{j+1}-y_j\le C \frac{\beta}{2-\beta} (jh)^{\frac{\beta}{2-\beta}} h L
\le  C \frac{\beta}{2-\beta} y_j^{\frac{\beta}{2}} h L^{1-\frac{\beta}2}.
$$

Using the notation introduced for prismatic elements in the previous section,
the Raviart-Thomas interpolation is given by $\Pi_h\bs=(\tilde{\Pi}_h\bst,\Pi_{h,n+1}\sigma_{n+1})$
where $\tilde{\Pi}_h$ and $\Pi_{h,n+1}$ are given locally by $\Pi_K$ and $\Pi_{n+1}$
respectively. We recall that, since $-1<\alpha<1$, $y^{\alpha}$ and $y^{-\alpha}$ belong to $A_2^s$.

\begin{theorem}
\label{teorema 5.3}
For some $\beta\in(1-\alpha,2)$, consider the family of meshes $\cth$ defined above.
Let $u$ be the solution of (\ref{CS1}),
$\bs=-y^{\alpha}\nabla u$, and
$(u_{L,h}, \bs_{L,h})$ be the approximation given by \eqref{condicion de Neumann debil truncado}
and \eqref{debil truncado}. Then, if $L=C_1|\log h|$
with $C_1\ge 4/\sqrt{\lambda_1}$, we have
\begin{equation}
\label{error sigma definitivo}
\|\bs-\bs_{L,h}\|_{L_{y^{-\alpha}}^2(\mathcal{C}_L)} \le Ch |\log h| \|f\|_{\mathbb{H}^{1-s}(\Omega)},
\end{equation}
and
\begin{equation}
\label{error u definitivo}
\|u-u_{L,h}\|_{L_{y^{\alpha}}^2(\mathcal{C}_L)}\le C h |\log h|^2\,\|f\|_{\mathbb{H}^{1-s}(\Omega)},
\end{equation}
where the constant $C$ depends on $\Omega$, $\alpha$, and $\beta$.
\end{theorem}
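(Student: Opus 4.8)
The plan is to combine the abstract error estimates of Theorem~\ref{sigma L} with the anisotropic interpolation estimates of Lemma~\ref{errorinterp prismas}, the a priori bounds of Lemma~\ref{cotas sigma}, and the truncation estimate \eqref{exponencial}, on the graded mesh \eqref{malla_y}. First I would bound the interpolation error $\|\bs-\Pi_h\bs\|_{L_{y^{-\alpha}}^2(\mathcal{C}_L)}$ element by element. On a prism $P=K\times[y_j,y_{j+1}]$, Lemma~\ref{errorinterp prismas} gives, for the tangential part $\bst$,
$$
\|\bst-\tilde\Pi_K\bst\|_{L^2_{y^{-\alpha}}(P)}
\le C\Big\{(y_{j+1}-y_j)\big\|\textstyle\frac{\partial\bst}{\partial y}\big\|_{L^2_{y^{-\alpha}}(P)}
+h_K\|D_x\bst\|_{L^2_{y^{-\alpha}}(P)}\Big\},
$$
and similarly for $\sigma_{n+1}$. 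The factor $h_K\le h$ multiplying the $x$-derivatives is harmless since $\|D_x\bst\|_{L^2_{y^{-\alpha}}}$ and $\|\nabla_x\sigma_{n+1}\|_{L^2_{y^{-\alpha}}}$ are controlled directly by \eqref{cota cruzadas ij}. The delicate term is $(y_{j+1}-y_j)\|\partial\bst/\partial y\|$: here I would not use \eqref{cota cruzadas ij} for $\partial\sigma_i/\partial y$ (which is not even known to be in $L^2_{y^{-\alpha}}$ near $y=0$) but rather the second estimate of Lemma~\ref{cotas sigma} with an appropriate power $\beta$. Writing $(y_{j+1}-y_j)^2 y^{-\alpha}\le C_\beta h^2 y_j^\beta L^{2-\beta} y^{-\alpha}$ on $P$ via \eqref{yj+1-yj}, and bounding $y_j^\beta\le C y^\beta$ on $P$ for $j\ge1$, one absorbs the extra $y^\beta$ into the weight and gets a factor $h^2 L^{2-\beta}$ times $\|\partial\sigma_i/\partial y\|^2_{L^2_{y^{-\alpha+\beta}}(\mathcal{C}_L)}\le C L^\beta\|f\|^2$. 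Summing over $j$ (the $x$-direction sum is trivially finite, the $y$-direction sum telescopes into integrals over $\mathcal{C}_L$), the bottom layer $j=0$ has to be handled separately but the weight $y^{-\alpha+\beta}$ with $\beta>1-\alpha$ makes $(y_1-y_0)^2 y^{-\alpha}\le C y^{-\alpha+\beta} y_1^{-\beta+\beta}\cdots$ integrable there as well; in all one obtains
$$
\|\bs-\Pi_h\bs\|_{L_{y^{-\alpha}}^2(\mathcal{C}_L)}\le C\big(h+hL\big)\|f\|_{\mathbb{H}^{1-s}(\Omega)}\le C h L\,\|f\|_{\mathbb{H}^{1-s}(\Omega)}.
$$

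Next I would handle the truncation term $\|\bs-\bs_L\|_{L_{y^{-\alpha}}^2(\mathcal{C}_L)}$. Since $\bs-\bs_L=-y^\alpha\nabla(u-u_L)$ we have $\|\bs-\bs_L\|_{L_{y^{-\alpha}}^2(\mathcal{C}_L)}=\|\nabla(u-u_L)\|_{L_{y^{\alpha}}^2(\mathcal{C}_L)}\le\|\nabla(u-u_L)\|_{L_{y^{\alpha}}^2(\mathcal{C})}$, and then \eqref{exponencial} gives the bound $C e^{-\sqrt{\lambda_1}L/4}\|f\|_{\mathbb{H}^{-s}(\Omega)}$. Choosing $L=C_1|\log h|$ with $C_1\ge 4/\sqrt{\lambda_1}$ makes $e^{-\sqrt{\lambda_1}L/4}\le h$, so this term is $\le C h\|f\|_{\mathbb{H}^{-s}(\Omega)}\le Ch\|f\|_{\mathbb{H}^{1-s}(\Omega)}$, which is of lower order than $hL$. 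Plugging both bounds into \eqref{cota sigma menos sigma Lh} yields
$$
\|\bs-\bs_{L,h}\|_{L_{y^{-\alpha}}^2(\mathcal{C}_L)}\le C h L\,\|f\|_{\mathbb{H}^{1-s}(\Omega)}=C h|\log h|\,\|f\|_{\mathbb{H}^{1-s}(\Omega)},
$$
which is \eqref{error sigma definitivo}.

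For the scalar variable I would use \eqref{cota u menos u Lh}. The term $L\{\|\bs-\Pi_h\bs\|+\|\bs-\bs_L\|\}$ is already bounded, by the above, by $C h L^2\|f\|_{\mathbb{H}^{1-s}}=Ch|\log h|^2\|f\|_{\mathbb{H}^{1-s}}$. It remains to control $\|u-P_hu\|_{L_{y^{\alpha}}^2(\mathcal{C}_L)}$. On each prism $P=K\times[y_j,y_{j+1}]$ the classical weighted Poincaré inequality \eqref{poincare clasica} for $P_Ku$, or rather its prismatic anisotropic version obtained from \eqref{promedio cero anisotropica} combined with the averaging used in Lemma~\ref{sobre caras en prismas}, gives
$$
\|u-P_Pu\|_{L^2_{y^\alpha}(P)}\le C\Big\{(y_{j+1}-y_j)\big\|\textstyle\frac{\partial u}{\partial y}\big\|_{L^2_{y^\alpha}(P)}+h_K\|\nabla_x u\|_{L^2_{y^\alpha}(P)}\Big\},
$$
and here I may use instead the improved inequality \eqref{improved en CL} localized, i.e. $(y_{j+1}-y_j)\|\partial u/\partial y\|\le C\|y\,\partial_y u\|$ type bounds, together with the second-derivative a priori estimates \eqref{cotas derivadas segundas}. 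Handling the $y$-factor exactly as for $\bs$, using \eqref{yj+1-yj} and \eqref{cotas derivadas segundas} (possibly after an extra application of \eqref{improved en CL} to trade a power of $y$ for an $L$-factor near the bottom), gives $\|u-P_hu\|_{L_{y^{\alpha}}^2(\mathcal{C}_L)}\le C h L\,\|f\|_{\mathbb{H}^{1-s}(\Omega)}$, which is of lower order than $hL^2$. Summing all contributions gives \eqref{error u definitivo}. I expect the main obstacle to be the careful treatment of the bottom layer $j=0$ and the bookkeeping of the powers of $y$ and $L$: the weight $y^{-\alpha+\beta}$ (resp. $y^{\alpha+\beta}$) appearing when one applies \eqref{yj+1-yj} must be reconciled, via $y_j^\beta\sim y^\beta$ for $j\ge1$ and a direct estimate for $j=0$, with the weight under which the a priori bounds \eqref{cota cruzadas ij} and \eqref{cotas derivadas segundas} are available, and the choice $\beta\in(1-\alpha,2)$ is exactly what makes the relevant integrals near $y=0$ finite while keeping $(y_{j+1}-y_j)^2\le C h^2 y_j^\beta L^{2-\beta}$; one must also verify that all constants stay independent of $L$, which follows from scaling from $L=1$ together with the $L$-uniformity already established in Theorem~\ref{sigma L} and in \eqref{improved en CL}.
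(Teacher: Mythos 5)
Your proposal is correct and follows essentially the same route as the paper: the decomposition from Theorem~\ref{sigma L}, the anisotropic interpolation estimates of Lemma~\ref{errorinterp prismas} combined with the mesh grading \eqref{yj+1-yj} and the a priori bounds of Lemma~\ref{cotas sigma} to obtain $\|\bs-\Pi_h\bs\|_{L^2_{y^{-\alpha}}(\mathcal{C}_L)}\le ChL\|f\|_{\mathbb{H}^{1-s}(\Omega)}$, the choice of $L$ making the truncation term of order $h$, and the anisotropic Poincar\'e inequality \eqref{Poincare anisotropica en prismas} together with \eqref{derivadas primeras u} for $\|u-P_hu\|_{L^2_{y^{\alpha}}(\mathcal{C}_L)}$. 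The one point where your write-up is looser than the paper is the bottom layer $K\times[0,y_1]$ for the tangential components: there the factor $(y-y_0)=y$ must be kept inside the norm as in Lemma~\ref{errorinterp prismas}, writing $y^2y^{-\alpha}\le y_1^{2-\beta}y^{-\alpha+\beta}=h^2L^{2-\beta}y^{-\alpha+\beta}$, since $\partial\bst/\partial y\notin L^2_{y^{-\alpha}}$ near $y=0$ --- which is precisely the fix you gesture at.
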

\begin{proof}
From \eqref{cota sigma menos sigma Lh} and \eqref{exponencial} we have
\begin{equation}
\label{error de sigma 1}
\|\bs-\bs_{L,h}\|_{L_{y^{-\alpha}}^2(\mathcal{C}_L)}
\le \|\bs- \Pi_h\bs\|_{L_{y^{-\alpha}}^2(\mathcal{C}_L)}
+ C e^{-\sqrt{\lambda_1}L/4}\|f\|_{\mathbb{H}^{-s}(\Omega)}.
\end{equation}
Applying \eqref{RT error prismas} for the elements of the form  $P=K\times [0,y_1]$
and summing over all of them we obtain,
$$
\|\bst-\tilde\Pi_h\bst\|^2_{L_{y^{-\alpha}}^2(\Omega\times[0,y_1])}
\le
C\left\{h^2\|D_x\bst\|^2_{L_{y^{-\alpha}}^2(\Omega\times[0,y_1])}
+ \left\| y\frac{ \partial\bst}{\partial y}\right\|^2 _{L_{y^{-\alpha}}^2(\Omega\times[0,y_1])}\right\}.
$$
But,
$$
\begin{aligned}
\left\| y\frac{\partial\bst}{\partial y}\right\|^2 _{L_{y^{-\alpha}}^2(\Omega\times[0,y_1])}
&=\int_{\Omega}\int_0^{y_1} y^2
\left|\frac{\partial\bst}{\partial y}\right|^2 y^{-\alpha}dy dx\\
&\le y_1^{2-\beta}
\int_{\Omega}\int_0^{y_1}
\left|\frac{\partial\bst}{\partial y}\right|^2 y^{-\alpha+\beta}dy dx\\
&\le C h^2 L^{2-\beta}\int_{\Omega}\int_0^{y_1}
\left|\frac{\partial\bst}{\partial y}\right|^2 y^{-\alpha+\beta}dy dx
\end{aligned}
$$
where in the last inequality we have used the definition of $y_1$.
Then,
$$
\begin{aligned}
\|\bst&-\tilde\Pi_h\bst\|^2_{L_{y^{-\alpha}}^2(\Omega\times[0,y_1])}\\
&\le
C\left\{h^2\|D_x\bst\|^2_{L_{y^{-\alpha}}^2(\Omega\times[0,y_1])}
+ h^2 L^{2-\beta}\left\|\frac{ \partial\bst}{\partial y}\right\|^2 _{L_{y^{-\alpha+\beta}}^2(\Omega\times[0,y_1])}\right\}.
\end{aligned}
$$
Analogously, applying now \eqref{RT error prismas 2}, we have
$$
\begin{aligned}
\|\sigma_{n+1}&-\Pi_{h,n+1}\sigma_{n+1}\|^2_{L_{y^{-\alpha}}^2(\Omega\times[0,y_1])}\\
&\le C\left\{h^2\|\nabla_x\sigma_{n+1}\|^2_{L_{y^{-\alpha}}^2(\Omega\times[0,y_1])}
+ y^2_1\left\| \frac{ \partial\sigma_{n+1}}{\partial y}\right\|^2 _{L_{y^{-\alpha}}^2(\Omega\times[0,y_1])}\right\},
\end{aligned}
$$
and therefore, using again the definition of $y_1$, we obtain
$$
\begin{aligned}
\|\sigma_{n+1}&-\Pi_{h,n+1}\sigma_{n+1}\|^2_{L_{y^{-\alpha}}^2(\Omega\times[0,y_1])}\\
&\le C\left\{h^2\|\nabla_x\sigma_{n+1}\|^2_{L_{y^{-\alpha}}^2(\Omega\times[0,y_1])}
+ h^2 L^2\left\| \frac{ \partial\sigma_{n+1}}{\partial y}\right\|^2 _{L_{y^{-\alpha}}^2(\Omega\times[0,y_1])}\right\},
\end{aligned}
$$
Consequently, combining the estimates above, we conclude
\begin{equation}
\label{error en primera banda}
\begin{aligned}
&\|\bs- \Pi_h\bs\|^2_{L_{y^{-\alpha}}^2(\Omega\times[0,y_1])}
\\ & \le
C\left\{h^2\|D_x\bs\|^2_{L_{y^{-\alpha}}^2(\Omega\times[0,y_1])}
+ h^2 L^{2-\beta}\left\|\frac{\partial\bst}{\partial y}\right\|^2 _{L_{y^{-\alpha+\beta}}^2(\Omega\times[0,y_1])}\right.
\\ & \left. \qquad\qquad\qquad\qquad\qquad\qquad\quad + h^2 L^2\left\| \frac{ \partial\sigma_{n+1}}
{\partial y}\right\|^2 _{L_{y^{-\alpha}}^2(\Omega\times[0,y_1])}\right\}.
\end{aligned}
\end{equation}
Aplying now \eqref{RT error prismas}and \eqref{RT error prismas 2} for
the elements of the form $P=K\times[y_j,y_{j+1}]$, for each $j\ge 1$, and summing over these elements we obtain
$$
\begin{aligned}
\|\bs&- \Pi_h\bs\|^2_{L_{y^{-\alpha}}^2(\Omega\times[y_j,y_{j+1}])}\\
&\le
C\left\{h^2 \|D_x\bs\|^2_{L_{y^{-\alpha}}^2(\Omega\times[y_j,y_{j+1}])}
+ (y_{j+1}-y_j)^2\left\| \frac{ \partial\bs}{\partial y}\right\| _{L^2_{y^{-\alpha}}(\Omega\times[y_j,y_{j+1}])}^2\right\},
\end{aligned}
$$
and using \eqref{yj+1-yj},
$$
\begin{aligned}
&\|\bs- \Pi_h\bs\|^2_{L_{y^{-\alpha}}^2(\Omega\times[y_j,y_{j+1}])}\\
&\le
C\left\{h^2 \|D_x\bs\|^2_{L_{y^{-\alpha}}^2(\Omega\times[y_j,y_{j+1}])}
+ C_\beta h^2 y_j^\beta L^{2-\beta}\left\| \frac{\partial\bs}{\partial y}\right\| _{L^2_{y^{-\alpha}}
(\Omega\times[y_j,y_{j+1}])}^2\right\}
\\
&\le
C\left\{h^2 \|D_x\bs\|^2_{L_{y^{-\alpha}}^2(\Omega\times[y_j,y_{j+1}])}
+ C_\beta h^2 L^{2-\beta}\left\| \frac{ \partial\bs}{\partial y}\right\| _{L^2_{y^{-\alpha+\beta}}
(\Omega\times[y_j,y_{j+1}])}^2\right\},
\end{aligned}
$$
and then, observing that
$$
L^{2-\beta}\left\| \frac{\partial\sigma_{n+1}}{\partial y}\right\| _{L^2_{y^{-\alpha+\beta}}(\Omega\times[y_j,y_{j+1}])}^2
\le
L^2
\left\| \frac{ \partial\sigma_{n+1}}{\partial y}\right\|^2 _{L_{y^{-\alpha}}^2(\Omega\times[y_j,y_{j+1}])},
$$
summing over $j$, and combining this with \eqref{error en primera banda}, we obtain
\begin{equation}
\label{error total}
\begin{aligned}
&\|\bs- \Pi_h\bs\|^2_{L_{y^{-\alpha}}^2(\mathcal{C}_L)}
\\ & \le
C\left\{h^2\|D_x\bs\|^2_{L_{y^{-\alpha}}^2(\mathcal{C}_L)}
+ h^2 L^{2-\beta}\left\|\frac{ \partial\bst}{\partial y}\right\|^2 _{L_{y^{-\alpha+\beta}}^2(\mathcal{C}_L)}\right.
\\ & \left. \qquad\qquad\qquad\qquad\qquad\qquad\quad + h^2 L^2
\left\| \frac{ \partial\sigma_{n+1}}{\partial y}\right\|^2 _{L_{y^{-\alpha}}^2(\mathcal{C}_L)}\right\},
\end{aligned}
\end{equation}
where, here and in what follows, the constant $C$ depends on $C_\beta$.

Applying now Lemma \ref{cotas sigma} and the bound \eqref{error total} it follows from \eqref{error de sigma 1} that
$$
\|\bs-\bs_{L,h}\|_{L_{y^{-\alpha}}^2(\mathcal{C}_L)}
\le ChL\|f\|_{\mathbb{H}^{1-s}(\Omega)}
+ C e^{-\sqrt{\lambda_1}L/4}\|f\|_{\mathbb{H}^{-s}(\Omega)}.
$$
From the hypothesis on $C_1$ we have $e^{-\sqrt{\lambda_1}L/4}\le h$ and, therefore,
\eqref{error sigma definitivo} is proved.

In view of \eqref{cota u menos u Lh}, to finish the proof of \eqref{error u definitivo}
it is enough to show that
\begin{equation}
\label{cota u-Phu}
\|u-P_hu\|_{L_{y^{\alpha}}^2(\mathcal{C}_L)}
\le ChL\|f\|_{{\mathbb H}^{-s}(\Omega)}.
\end{equation}
Using \eqref{Poincare anisotropica en prismas} for elements
of the form $K\times[0,y_1]$ we obtain
$$
\begin{aligned}
\|u-& P_hu\|_{L_{y^{\alpha}}^2(\Omega\times[0,y_1])}\\
&\le C\left\{h^{\frac2{2-\beta}}L
\left\|\frac{\partial u}{\partial y}\right\|_{L_{y^{\alpha}}^2(\Omega\times[0,y_1])}
+h\|\nabla_x u\|_{L_{y^{\alpha}}^2(\Omega\times[0,y_1])}\right\}\\
&\le ChL \|\nabla u\|_{L_{y^{\alpha}}^2(\Omega\times[0,y_1])},
\end{aligned}
$$
because $2/(2-\beta)\ge 1$ and $L\ge 1$.

On the other hand, \eqref{Poincare anisotropica en prismas} and \eqref{yj+1-yj} yields
$$
\|u-P_hu\|_{L_{y^{\alpha}}^2(\Omega\times[y_1,L])}
\le ChL\|\nabla u\|_{L_{y^{\alpha}}^2(\Omega\times[y_1,L])}
$$
and, therefore, taking into account \eqref{derivadas primeras u}, \eqref{cota u-Phu} is proved.
\end{proof}
Now we give some numerical examples showing the asymptotic  behavior
of the error proved in Theorem \ref{teorema 5.3}.
We solve Problem \eqref{CS1} with $\Omega=(0,1)\times(0,1)$ and
$$
f(x_1,x_2)=(2\pi^2)^s\sin(\pi x_1)\sin(\pi x_2).
$$
Recall that $0<s<1$ and $\alpha=1-2s$.
In this case, assuming that $s\neq 1/2$ (i. e. $\alpha\neq 0$), the solution is given by
$$
u(x_1,x_2,y)=\frac{2^{1-s}}{\Gamma(s)}(\sqrt{2}\pi y)^s K_s(\sqrt{2}\pi y)\sin(\pi x_1)\sin(\pi x_2)
$$
where $K_s$ is a modified Bessel function of the second kind (see \cite{NOS}).

We use prismatic elements given by a uniform mesh of triangles in $\Omega$ and the
refinement given by \eqref{malla_y} in the $y$-direction. Observe that for
these meshes $h\sim (DOF)^{-1/3}$ where $DOF$ denotes the degrees of freedom.
Moreover, we choose
$L$ as in Theorem \ref{teorema 5.3} with $C_1=1$, i. e., $L=|\log h|$.

The next graphics show the order of the errors $\|\bs-\bs_{L,h}\|_{L_{y^{-\alpha}}^2(\mathcal{C}_L)}$
and $\|u-u_{L,h}\|_{L_{y^{\alpha}}^2(\mathcal{C}_L)}$ for several values of $\alpha$.

\begin{figure}[H]
  \centering
    \includegraphics[width=7cm]{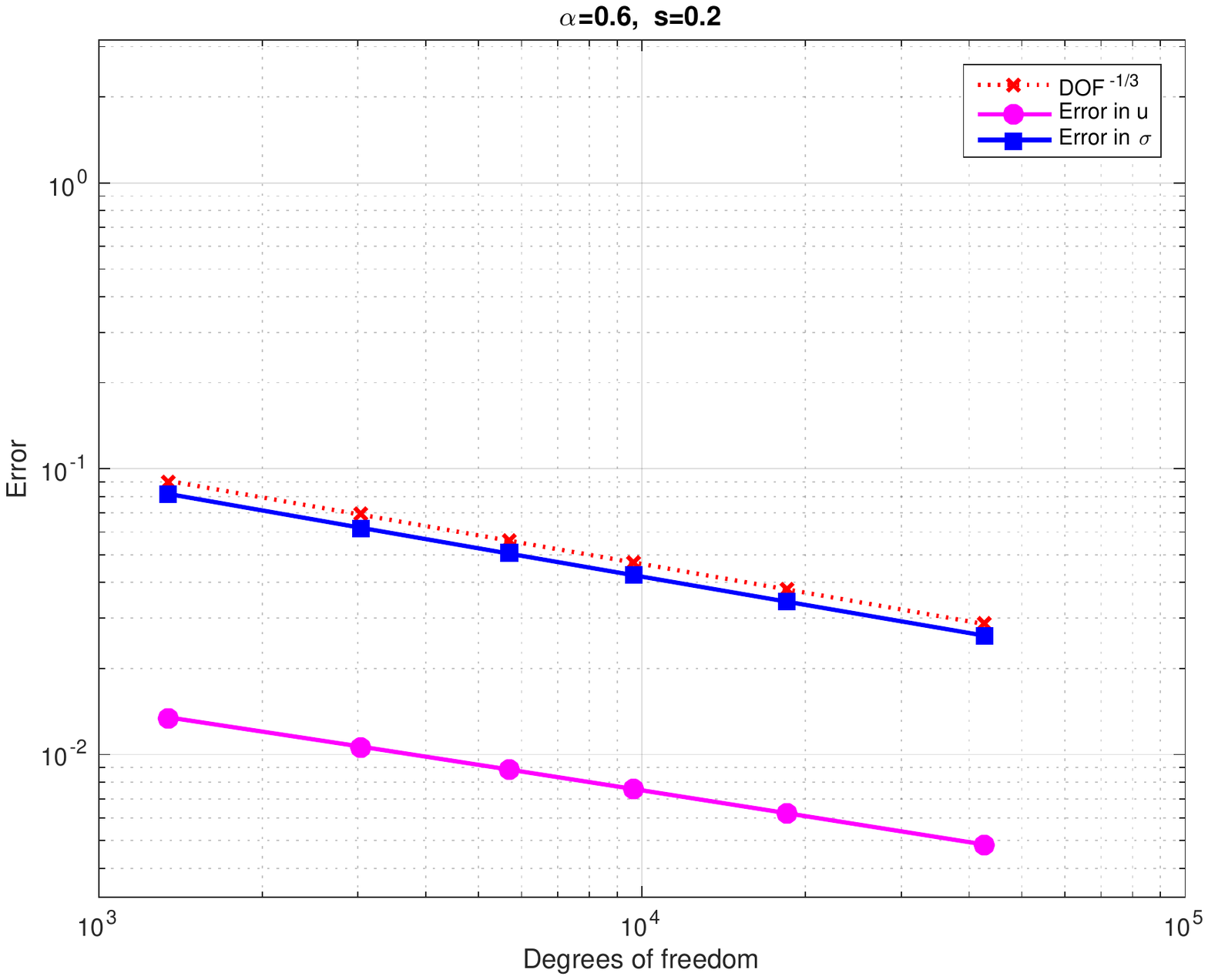}  \quad
    \includegraphics[width=7cm]{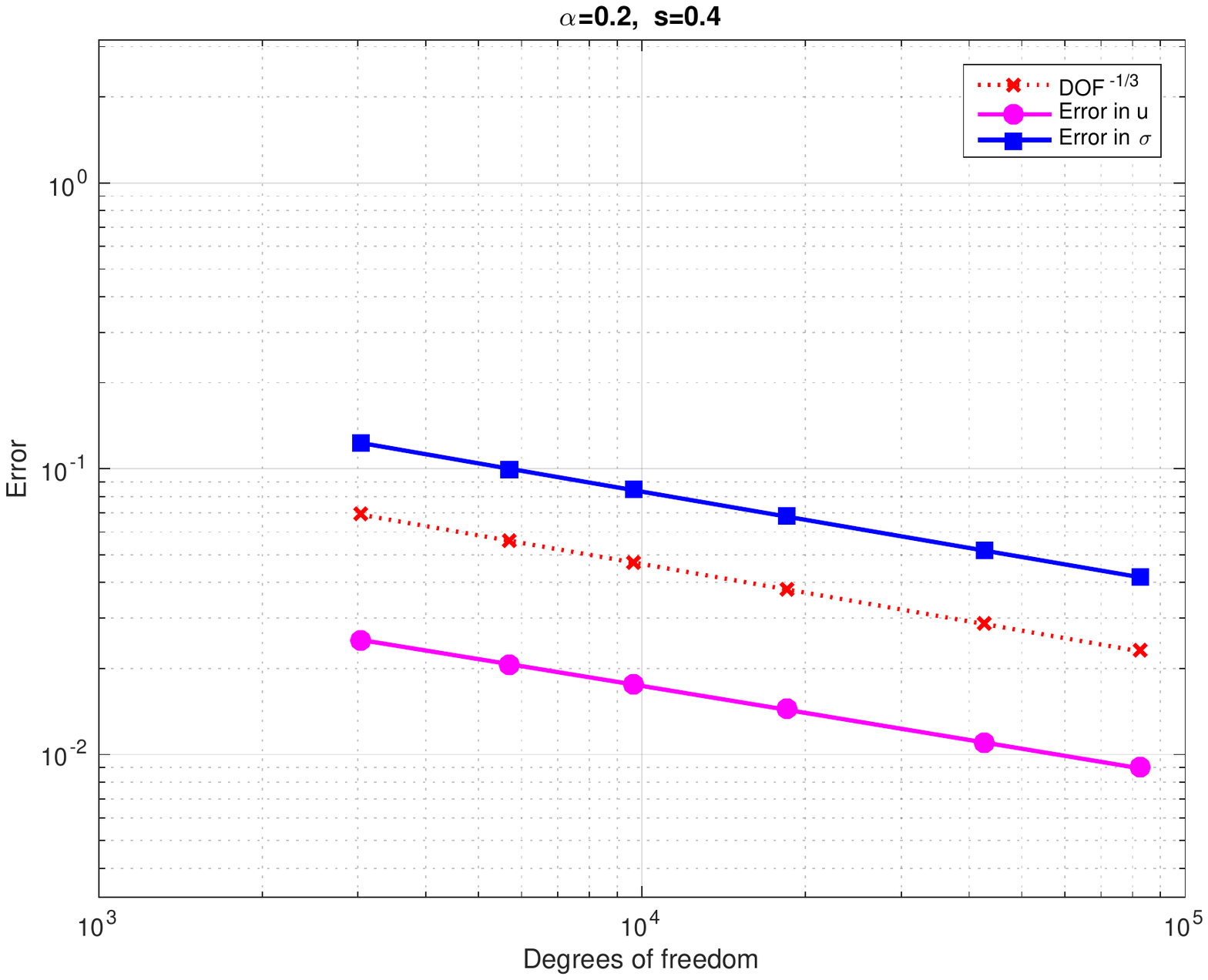}
  \caption{Rate of convergence: left $\alpha=0.6$, right $\alpha=0.2$.}
\end{figure}

\begin{figure}[H]
  \centering
    \includegraphics[width=7cm]{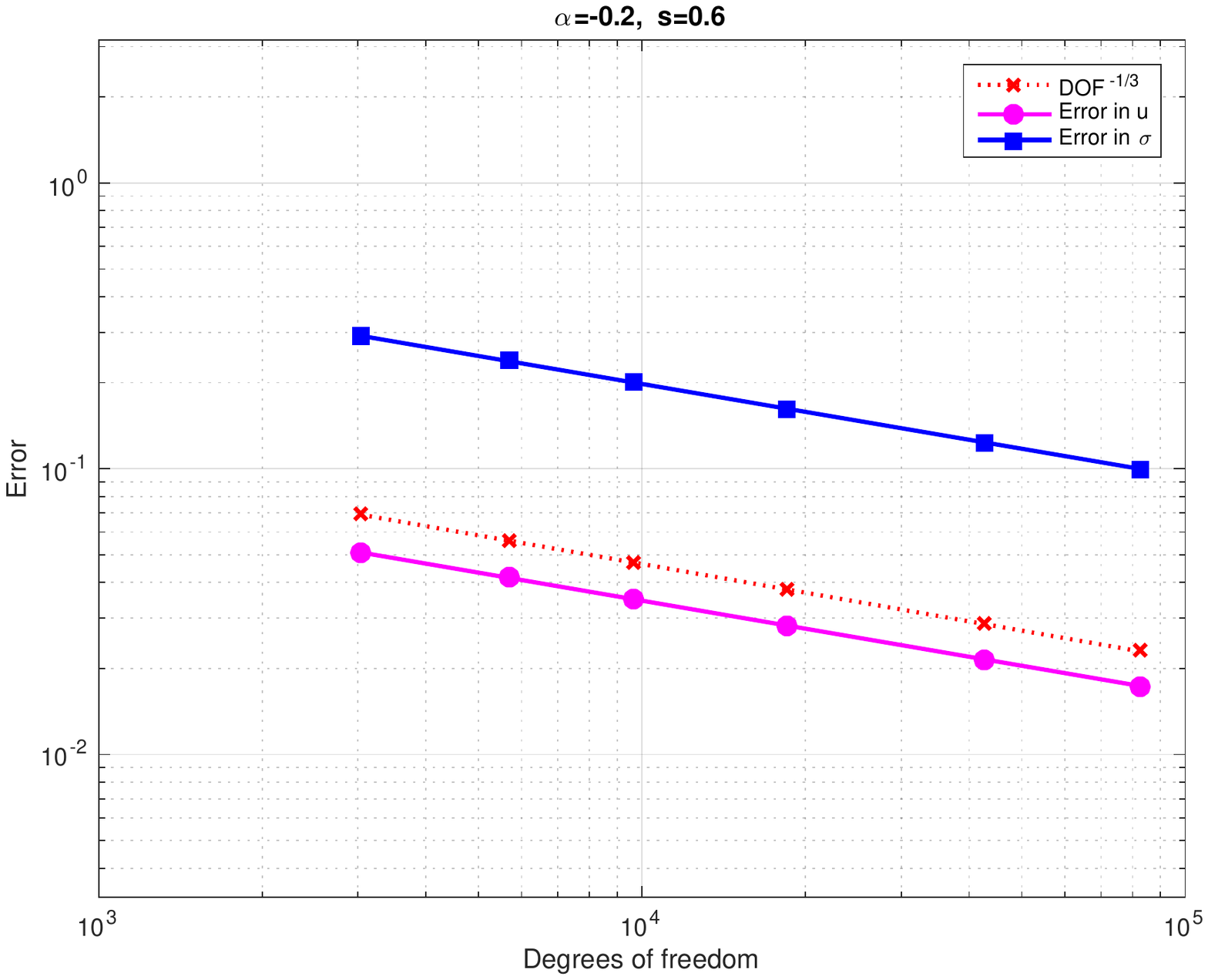}  \quad
    \includegraphics[width=7cm]{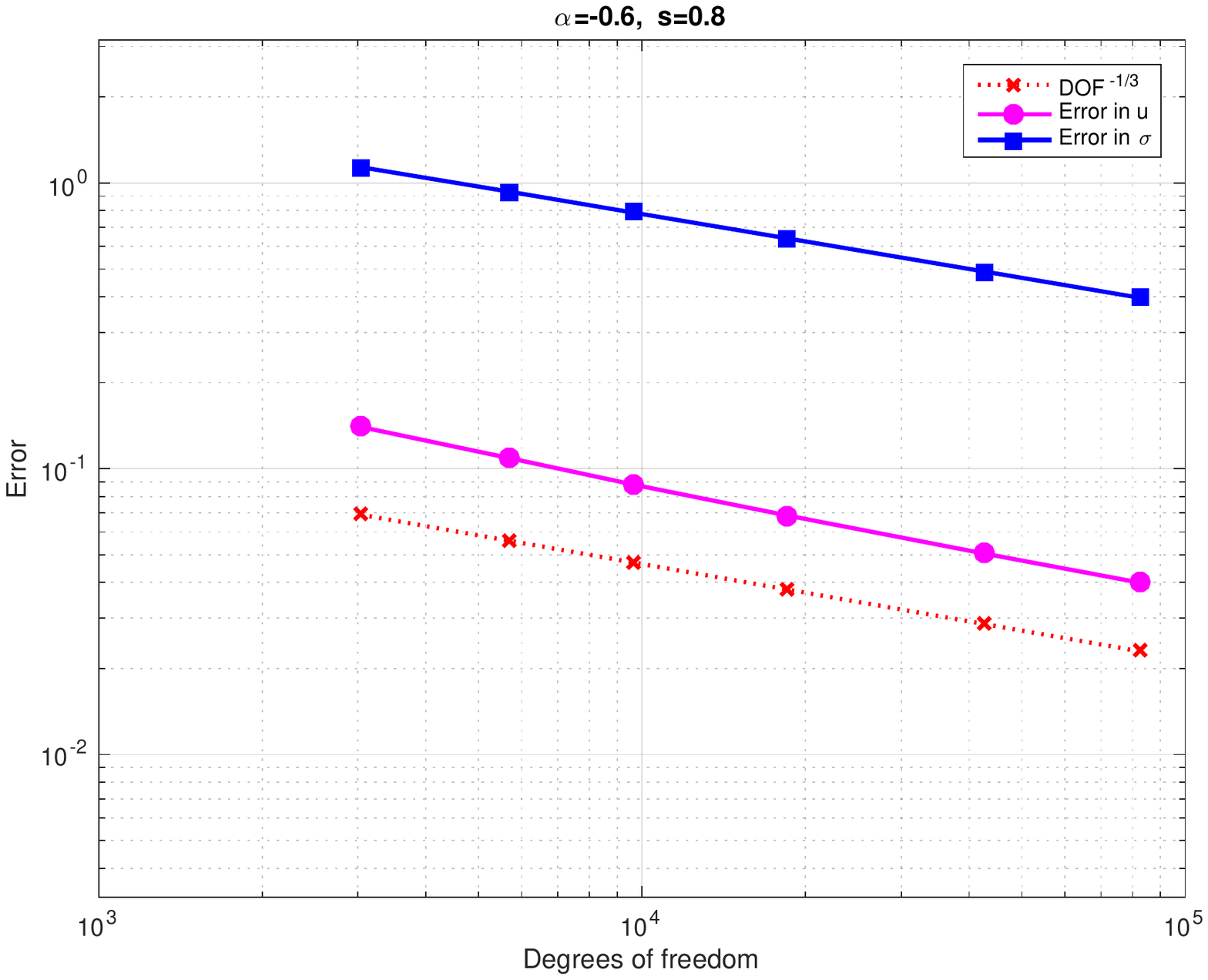}
  \caption{Rate of convergence: left $\alpha=-0.2$, right $\alpha=-0.6$.}
\end{figure}

Finally, to solve \eqref{frac laplacian}, we need to approximate $u(x,0)$ where $u$
is the solution of \eqref{CS1}.
We will use the approximations $u_{L,h}$ and $\bs_{L,h}$ obtained above.

Since $u_{L,h}$ is only an approximation in the $L^2$-norm, one cannot expect that its
restriction to $y=0$ be a good approximation of $u(x,0)$. In order to obtain a better
approximation we will make a local correction of $u_{L,h}$ using also the computed $\bs_{L,h}$.
This correction corresponds to a first order Taylor expansion,
indeed, the formula that we are going to prove in the next lemma is motivated by
$$
u(x,0)\sim u(x,\frac{y_1}2)-\frac{y_1}2 \,\frac{\partial u}{\partial y}\left(x,\frac{y_1}2\right).
$$
We will prove that in this way we obtain an approximation in $L^2(\Omega)$ of at least the same order
than the mixed finite element approximation of \eqref{CS1}.

Given $x\in\Omega$ and $0<j<N$ we introduce the jumps
$$
[u_{L,h}(x)]_j=u_{L,h}(x,y_j^+)-u_{L,h}(x,y_j^-).
$$
If $x$ is not in the interior of an element $K$ in the partition of $\Omega$ we choose
arbitrary an element containing it to evaluate $u_{L,h}$ (this is irrelevant because afterwards
we are going to integrate in $x$).

We will use the standard piecewise linear basis functions, namely, for $1\le j\le N-1$,
$$
\tau_j(y)= \left\{ \begin{array}{lcc}
\frac{y_{j+1}-y}{y_{j+1}-y_j} &   \mbox{if}  & y_j<y<y_{j+1} \\
\\ \frac{y-y_{j-1}}{y_j-y_{j-1}} &  \mbox{if} & y_{j-1} < y < y_j,
\end{array}
\right.
$$
$$
\tau_0(y)=\frac{y_1-y}{y_1} \qquad \mbox{if} \qquad 0<y<y_1,
$$
and
$$
\tau_N=\frac{y-y_{N-1}}{y_N-y_{N-1}} \qquad \mbox{if} \qquad  y_{n-1}<y<y_N.
$$
\begin{lemma}
For any $x\in\Omega$ we have
\begin{equation}
\label{Taylor approximation}
u_{L,h}(x,0)+\int_0^L\tau_0(y) y^{-\alpha}\sigma_{L,h,n+1}(x,y)dy
=\int_0^L y^{-\alpha}\sigma_{L,h,n+1}(x,y)dy.
\end{equation}
\end{lemma}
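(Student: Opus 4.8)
The plan is to test the first equation in \eqref{debil truncado} with a function $\bt\in{\pmb S}_{h,N}$ that is supported on the single column of prisms lying over one simplex $K$ of the triangulation of $\Omega$ and whose only nonzero component is the last one. Fix $K$ and a point $x\in K$, let ${\pmb e}_{n+1}$ be the last vector of the canonical basis of ${\mathbb R}^{n+1}$, and let $\psi=1-\tau_0=\sum_{j=1}^N\tau_j$ be the continuous function on $[0,L]$, piecewise linear with respect to the partition \eqref{malla_y}, that equals $y/y_1$ on $[0,y_1]$ and $1$ on $[y_1,L]$. Define $\bt$ to be $\psi(y)\,{\pmb e}_{n+1}$ over $K$ and $\b0$ over every other simplex of $\Omega$.

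First I would check that $\bt\in{\pmb S}_{h,N}$. On each prism $P=K\times[y_j,y_{j+1}]$ of the column $\bt$ has the form $(0,\dots,0,a_{n+1}+cy)$, hence $\bt|_P\in\RT_0(P)$, and $\bt\equiv\b0$ on all the other elements. Its normal trace vanishes across every interior vertical face $F\times[y_j,y_{j+1}]$ since ${\pmb e}_{n+1}$ is orthogonal to $\bn_F$, and it is continuous across every interior horizontal face $K\times\{y_j\}$ since $\psi$ is continuous; therefore $\bt\in\hdiv$ and $\bt\in{\pmb S}_h$. Finally, because $\psi(0)=0$, the normal trace of $\bt$ on $\Gamma_N=\Omega\times\{0\}$ vanishes, so $\bt\in{\pmb S}_{h,N}$. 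This last requirement is precisely why $\psi=1-\tau_0$, and not $\tau_0$ itself, must be used as the vertical profile.

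It then remains to evaluate the two integrals in \eqref{debil truncado} for this $\bt$. On the column over $K$ the last component $\sigma_{L,h,n+1}$ of $\bs_{L,h}$ is a function of $y$ alone (of the form $a_{n+1}+cy$ on each layer), so $\int_{\mathcal{C}_L} y^{-\alpha}\bs_{L,h}\cdot\bt=|K|\int_0^L y^{-\alpha}\sigma_{L,h,n+1}(x,y)\psi(y)\,dy$. On the other hand $\mbox{div\,}\bt=\psi'(y)$ on the column and $0$ elsewhere, with $\psi'=1/y_1$ on $(0,y_1)$ and $\psi'=0$ on $(y_1,L)$; since $u_{L,h}$ is constant, equal to $u_{L,h}(x,0)$, on the bottom layer $K\times(0,y_1)$, one gets $\int_{\mathcal{C}_L} u_{L,h}\,\mbox{div\,}\bt=\frac{|K|}{y_1}\int_0^{y_1}u_{L,h}(x,y)\,dy=|K|\,u_{L,h}(x,0)$. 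Substituting these two identities into $\int_{\mathcal{C}_L} y^{-\alpha}\bs_{L,h}\cdot\bt-\int_{\mathcal{C}_L}u_{L,h}\,\mbox{div\,}\bt=0$, dividing by $|K|$, and recalling $\psi=1-\tau_0$ gives exactly \eqref{Taylor approximation} for the chosen $x$; since $x$ was arbitrary and both sides are constant on each element, the identity holds for every $x\in\Omega$ with the stated convention.

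There is no real obstacle here: the only delicate points are verifying that $\bt$ is admissible — in particular that it satisfies the essential condition on $\Gamma_N$, which forces the choice $1-\tau_0$ — and noticing that $\sigma_{L,h,n+1}$ and $u_{L,h}(\cdot,0)$ do not depend on the $x$-variable along each column, so that the common factor $|K|$ cancels and one is left with the pointwise identity.
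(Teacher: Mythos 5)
Your proof is correct and is essentially the paper's argument in condensed form: the paper tests the first equation of \eqref{debil truncado} with each $(\b0,\tau_j)$, $1\le j\le N$, separately and then assembles the resulting identities via the telescoping jump formula, whereas you test once with the single admissible field $(\b0,1-\tau_0)=(\b0,\sum_{j=1}^N\tau_j)$, which is exactly the sum of those test functions. Your verification that this field lies in ${\pmb S}_{h,N}$ (in particular that $\psi(0)=0$ enforces the essential condition on $\Gamma_N$) and the evaluation of the two integrals using that $\sigma_{L,h,n+1}$ and $u_{L,h}$ are constant in $x$ over each column are all sound, so the streamlining is legitimate and even avoids the jump bookkeeping.
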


\begin{proof}
Since $u_{L,h}$ is piecewise constant one can see that
\begin{equation}
\label{barrow discreto}
u_{L,h}(x,L)=\sum_{j=1}^{N-1}[u_{L,h}(x)]_j+u_{L,h}(x,0).
\end{equation}
Let $K$ be the element containing $x$. For $1\le j\le N-1$, taking the function
$(\b0,\tau_j)$ supported in $K\times[y_{j-1},y_{j+1}]$ as test function in (\ref{mixedfem}), we have
$$
\int_0^L\int_K y^{-\alpha}\sigma_{L,h}\cdot(\b0,\tau_j) \,dx\,dy
- \int_0^L\int_K u_{L,h} div(\b0,\tau_j) \,dx\,dy=0
$$
and, since $\sigma_{L,h,n+1}(x,y)$ is independent of $x$ for $x\in K$, we obtain
$$
[u_{L,h}(x)]_j+ \int_0^Ly^{-\alpha}\sigma_{L,h,n+1}(x,y) \tau_j(y)\,dy=0.
$$
Analogously, using now $(0,\tau_N)$ yields
$$
u_{L,h}(x,L)=\int_0^L
y^{-\alpha}\sigma_{L,h,n+1}(x,y) \tau_N(y)\,dy.
$$
Therefore, replacing in \eqref{barrow discreto} we have
$$
\sum_{j=1}^N\int_0^Ly^{-\alpha}\sigma_{L,h,n+1}(x,y) \tau_j(y)\,dy=u_{L,h}(x,0)
$$
which immediately gives \eqref{Taylor approximation} because $\sum_{j=0}^N\tau_j\equiv 1$.
\end{proof}
To approximate the solution of \eqref{frac laplacian} given by $v(x)=u(x,0)$
we introduce
$$
v_{L,h}(x)=u_{L,h}(x,0)+\int_0^L\tau_0(y) y^{-\alpha}\sigma_{L,h,n+1}(x,y)dy.
$$
We also define $v_L(x)=u_L(x,0)$.

\begin{lemma}
\label{error vL-vLh}
$$
\|v_L-v_{L,h}\|_{L^2(\Omega)}
\le \frac1{\sqrt{1-\alpha}} L^{\frac{1-\alpha}2}
\|\bs-\bs_{L,h}\|_{L^2_{y^{-\alpha}}(\mathcal{C}_L)}.
$$
\end{lemma}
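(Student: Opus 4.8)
The plan is to express the difference $v_L-v_{L,h}$, for a.e.\ fixed $x\in\Omega$, as an integral over the fiber $\{x\}\times(0,L)$ of the last component of a flux error, and then to close the estimate by a weighted Cauchy--Schwarz inequality in the $y$ variable. First I would represent $v_L$. Writing $\bs_L=-y^\alpha\nabla u_L$, its last component satisfies $y^{-\alpha}\sigma_{L,n+1}=-\partial u_L/\partial y$; and since $u_L$ vanishes on $\Gamma_D$, in particular on $\Omega\times\{L\}$, the fundamental theorem of calculus in $y$ gives, for a.e.\ $x$,
\[
v_L(x)=u_L(x,0)=u_L(x,0)-u_L(x,L)=-\int_0^L\frac{\partial u_L}{\partial y}(x,y)\,dy=\int_0^L y^{-\alpha}\,\sigma_{L,n+1}(x,y)\,dy,
\]
the integral being finite for a.e.\ $x$ because $\int_0^L y^{-\alpha}\,dy<\infty$ (here $\alpha<1$) and $\bs_L\in L^2_{y^{-\alpha}}(\mathcal{C}_L)$. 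For the discrete quantity, the definition of $v_{L,h}$ together with the identity \eqref{Taylor approximation} of the preceding lemma gives immediately
\[
v_{L,h}(x)=u_{L,h}(x,0)+\int_0^L\tau_0(y)\,y^{-\alpha}\sigma_{L,h,n+1}(x,y)\,dy=\int_0^L y^{-\alpha}\,\sigma_{L,h,n+1}(x,y)\,dy .
\]

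Next I would subtract the two representations to obtain, for a.e.\ $x\in\Omega$,
\[
v_L(x)-v_{L,h}(x)=\int_0^L y^{-\alpha}(\sigma_{L,n+1}-\sigma_{L,h,n+1})(x,y)\,dy ,
\]
and then split $y^{-\alpha}=y^{-\alpha/2}\cdot y^{-\alpha/2}$, apply Cauchy--Schwarz in $y$, and use $\int_0^L y^{-\alpha}\,dy=L^{1-\alpha}/(1-\alpha)$ to get
\[
|v_L(x)-v_{L,h}(x)|^2\le \frac{L^{1-\alpha}}{1-\alpha}\int_0^L y^{-\alpha}\,\big|(\sigma_{L,n+1}-\sigma_{L,h,n+1})(x,y)\big|^2\,dy .
\]
Integrating over $\Omega$, bounding the last component by the whole flux error, and taking square roots yields
\[
\|v_L-v_{L,h}\|_{L^2(\Omega)}\le \frac{1}{\sqrt{1-\alpha}}\,L^{\frac{1-\alpha}{2}}\,\|\bs_L-\bs_{L,h}\|_{L^2_{y^{-\alpha}}(\mathcal{C}_L)},
\]
which is the claimed bound. (The right-hand side comes out naturally with the truncated flux $\bs_L$; since $\|\bs_L-\bs_{L,h}\|_{L^2_{y^{-\alpha}}(\mathcal{C}_L)}$ and $\|\bs-\bs_{L,h}\|_{L^2_{y^{-\alpha}}(\mathcal{C}_L)}$ differ only by $\|\bs-\bs_L\|_{L^2_{y^{-\alpha}}(\mathcal{C}_L)}=\|\nabla(u-u_L)\|_{L^2_{y^{\alpha}}(\mathcal{C}_L)}$, which is exponentially small in $L$ by \eqref{exponencial}, the two forms of the estimate are interchangeable in the subsequent analysis.)

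I do not expect a genuine obstacle in this lemma; the only points needing care are measure-theoretic. Since $u_{L,h}$ is piecewise constant, $u_{L,h}(x,0)$ is defined only up to an $x$-null set, which is harmless as the conclusion is an $L^2(\Omega)$ norm. And the fundamental theorem of calculus for $u_L$ must be justified for a.e.\ fixed $x$: for such $x$, the function $u_L(x,\cdot)$ is absolutely continuous on $[\varepsilon,L]$ for each $\varepsilon>0$, and $\partial u_L/\partial y(x,\cdot)\in L^1(0,L)$ by the same Cauchy--Schwarz inequality used above, so the limit at $y=0$ exists and equals $u_L(x,0)$ while the boundary term at $y=L$ vanishes. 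Everything else is the elementary weighted Cauchy--Schwarz step and the explicit computation of $\int_0^L y^{-\alpha}\,dy$.
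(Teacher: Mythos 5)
Your proof is correct and follows essentially the same route as the paper: represent $v_L(x)=\int_0^L y^{-\alpha}\sigma_{L,n+1}(x,y)\,dy$ using $u_L(x,L)=0$ and the fundamental theorem of calculus in $y$, invoke the identity \eqref{Taylor approximation} to write $v_{L,h}$ in the same form, subtract, and apply the weighted Cauchy--Schwarz inequality with $\int_0^L y^{-\alpha}\,dy=L^{1-\alpha}/(1-\alpha)$. Your parenthetical is well taken: the paper's proof drops the subscript and writes the flux in the representation of $v_L$ as $\bs_{n+1}$, landing directly on the stated bound with $\bs$, whereas the honest outcome of the computation is the bound with $\bs_L$; as you note, the two differ only by the exponentially small term $\|\bs-\bs_L\|_{L^2_{y^{-\alpha}}(\mathcal{C}_L)}$, which is absorbed in the subsequent theorem.
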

\begin{proof}
Since $u_L(x,L)=0$ and, recalling that
$\frac{\partial u_L}{\partial y}=-y^{-\alpha}\bs_{n+1}$,
we have
$$
v_L(x)=\int_0^Ly^{-\alpha}\bs_{n+1}(x,y)dy.
$$
Therefore, using \eqref{Taylor approximation} and the definition of $v_{L,h}$,
we obtain
$$
v_L(x)-v_{L,h}(x)
=\int_0^Ly^{-\alpha}(\bs_{n+1}(x,y)-\bs_{L,h,n+1}(x,y))dy,
$$
and, applying the Schwarz inequality,
$$
|v_L(x)-v_{L,h}(x)|^2
\le\left(\int_0^Ly^{-\alpha}dy\right)
\int_0^Ly^{-\alpha}|(\bs_{n+1}(x,y)-\bs_{L,h,n+1}(x,y))|^2dy,
$$
and integrating now in $x$ we conclude the proof.
\end{proof}
We can now prove the error estimate for the approximation of the solution of
the Fractional Laplacian.

\begin{theorem}
Under the hypotheses of Theorem \ref{teorema 5.3} we have
$$
\|v-v_{L,h}\|_{L^2(\Omega)}
\le Ch |\log h|^{\frac{3-\alpha}2} \|f\|_{\mathbb{H}^{1-s}(\Omega)},
$$
where the constant is as in Theorem \ref{teorema 5.3} an depends
also on $\alpha$.
\end{theorem}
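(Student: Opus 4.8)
The strategy is to introduce the intermediate function $v_L(x)=u_L(x,0)$ and use the triangle inequality
$$
\|v-v_{L,h}\|_{L^2(\Omega)}\le \|v-v_L\|_{L^2(\Omega)}+\|v_L-v_{L,h}\|_{L^2(\Omega)},
$$
bounding the first term by the cylinder-truncation estimate \eqref{exponencial} and the second by Lemma \ref{error vL-vLh} together with Theorem \ref{teorema 5.3}.

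For the discretization term, Lemma \ref{error vL-vLh} gives
$$
\|v_L-v_{L,h}\|_{L^2(\Omega)}\le \frac{1}{\sqrt{1-\alpha}}\,L^{\frac{1-\alpha}2}\,\|\bs-\bs_{L,h}\|_{L^2_{y^{-\alpha}}(\mathcal{C}_L)},
$$
so, using the bound \eqref{error sigma definitivo} of Theorem \ref{teorema 5.3} and the choice $L=C_1|\log h|$, one gets
$$
\|v_L-v_{L,h}\|_{L^2(\Omega)}\le C\,L^{\frac{1-\alpha}2}\,h|\log h|\,\|f\|_{\mathbb{H}^{1-s}(\Omega)}= C\,h\,|\log h|^{\frac{3-\alpha}2}\,\|f\|_{\mathbb{H}^{1-s}(\Omega)},
$$
which is already the claimed rate since $\tfrac{1-\alpha}2+1=\tfrac{3-\alpha}2$.

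For the truncation term I would invoke the trace theorem associated with the Caffarelli--Silvestre extension (as used in \cite{NOS}): the trace operator on $y=0$ is bounded from the weighted energy space on $\mathcal{C}$ into $\mathbb{H}^s(\Omega)$, and $\mathbb{H}^s(\Omega)$ embeds continuously into $L^2(\Omega)$. Applying this to $u-u_L$, where $u_L$ is extended by zero for $y>L$ (so that this zero extension belongs to the energy space on the full cylinder, since $u_L$ vanishes on $y=L$), and noting that its trace on $y=0$ is precisely $v-v_L$, we get
$$
\|v-v_L\|_{L^2(\Omega)}\le C\,\|\nabla(u-u_L)\|_{L^2_{y^\alpha}(\mathcal{C})}.
$$
Then \eqref{exponencial} together with the hypothesis $C_1\ge 4/\sqrt{\lambda_1}$, which forces $e^{-\sqrt{\lambda_1}L/4}\le h$, yields $\|v-v_L\|_{L^2(\Omega)}\le C h\|f\|_{\mathbb{H}^{-s}(\Omega)}\le C h\|f\|_{\mathbb{H}^{1-s}(\Omega)}$, using $\mathbb{H}^{1-s}(\Omega)\hookrightarrow \mathbb{H}^{-s}(\Omega)$. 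Since $h\le h|\log h|^{(3-\alpha)/2}$ for $h$ small, adding the two estimates completes the proof.

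The only delicate point — and the one I would emphasize — is this trace step: one must verify that the zero extension of $u_L$ really belongs to the weighted $H^1$ space over the whole cylinder $\mathcal{C}$, so that the trace theorem and the exponential estimate \eqref{exponencial} can legitimately be applied to $u-u_L$; everything else is a routine combination of the already established inequalities \eqref{exponencial}, \eqref{error sigma definitivo}, Lemma \ref{error vL-vLh}, and the Sobolev embedding $\mathbb{H}^s(\Omega)\hookrightarrow L^2(\Omega)$.
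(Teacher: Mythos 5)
Your proposal is correct and follows essentially the same route as the paper: the same splitting into $v-v_L$ and $v_L-v_{L,h}$, with the second term handled by Lemma \ref{error vL-vLh} plus \eqref{error sigma definitivo} and the first by the trace theorem of \cite[Proposition 2.5]{NOS} together with the exponential truncation estimate (the paper uses \eqref{exponencial 2}, i.e.\ the full $H^1_{y^\alpha}(\mathcal{C})$ norm obtained from \eqref{exponencial} via the Poincar\'e inequality \eqref{poincare en el cilindro}, which is the same thing). The "delicate point" you flag about the zero extension of $u_L$ is already built into the statement of \eqref{exponencial} as quoted from \cite{NOS}, so no extra work is needed there.
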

\begin{proof}
From Lemma \ref{error vL-vLh} and, recalling that $L=C_1|\log h|$,
we have
$$
\|v_L-v_{L,h}\|_{L^2(\Omega)}
\le C |\log h|^{\frac{1-\alpha}2}
\|\bs-\bs_{L,h}\|_{L^2_{y^{-\alpha}}(\mathcal{C}_L)}
$$
where the constant depends on $\alpha$. Combining this estimate with
\eqref{error sigma definitivo} we obtain
\begin{equation}
\label{error vL-vLh 2}
\|v_L-v_{L,h}\|_{L^2(\Omega)}
\le C h|\log h|^{\frac{3-\alpha}2}
\|f\|_{\mathbb{H}^{1-s}(\Omega)}.
\end{equation}
It remains to estimate $v-v_L$. But, from the trace theorem given
in \cite[Proposition 2.5]{NOS} combined with \eqref{exponencial 2}
$$
\|v-v_L\|_{L^2(\Omega)}
\le C\|u-u_L\|_{H^1_{y^\alpha}(\mathcal{C})}
\le C e^{-\sqrt{\lambda_1}L/4}\|f\|_{\mathbb{H}^{-s}(\Omega)}
$$
and, from the definition of $L$ and $C_1$, we obtain
$$
\|v-v_L\|_{L^2(\Omega)}
\le C h\|f\|_{\mathbb{H}^{-s}(\Omega)}
$$
which combined with \eqref{error vL-vLh 2} concludes the proof.
\end{proof}
The next graphics show the order of the error $\|v-v_{L,h}\|_{L^2(\Omega)}$
for Problem \eqref{frac laplacian} with
$$
f(x_1,x_2)=(2\pi^2)^s\sin(\pi x_1)\sin(\pi x_2),
$$
which has as exact solution
$$
v(x_1,x_2)=\sin(\pi x_1)\sin(\pi x_2).
$$
\begin{figure}[H]
  \centering
    \includegraphics[width=7cm]{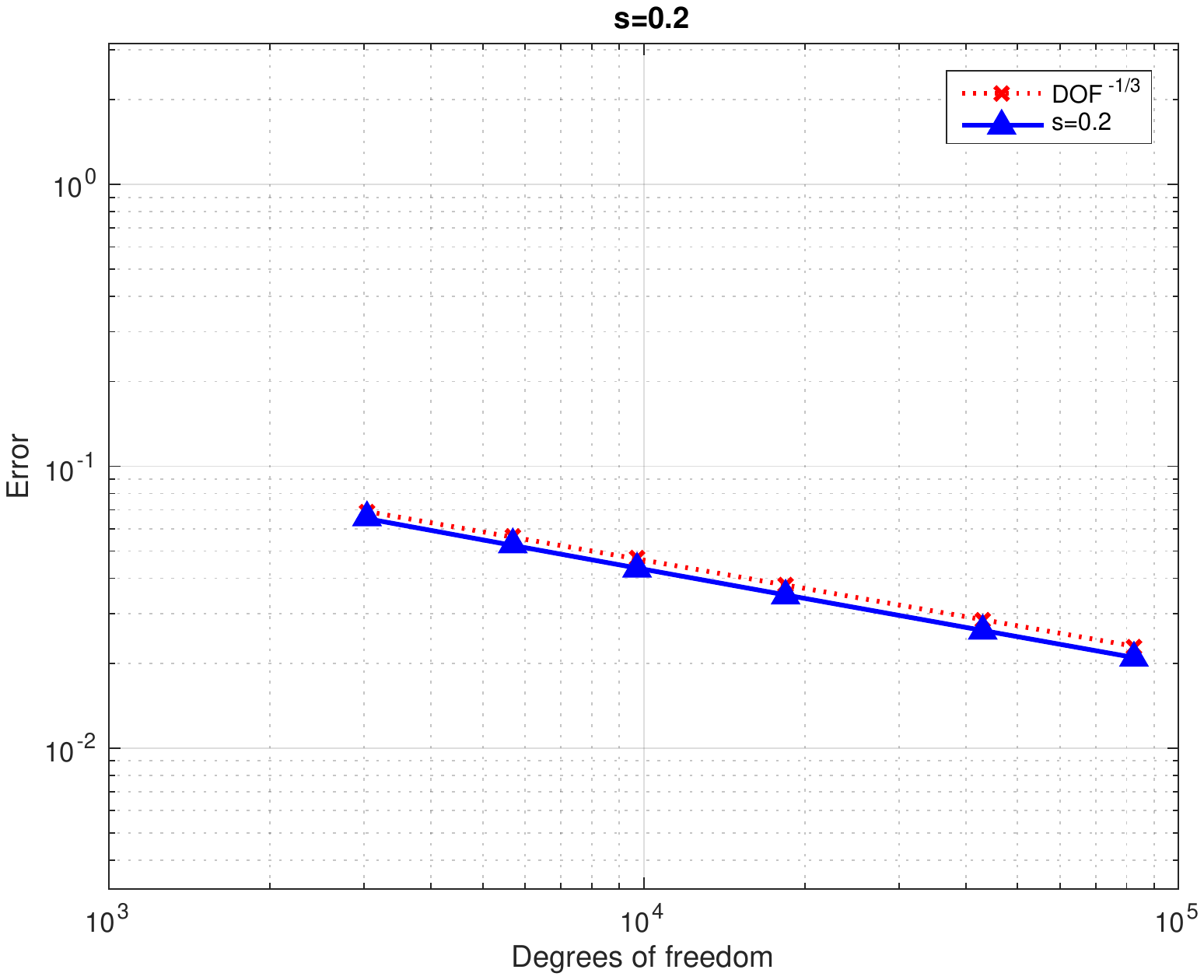}  \quad
    \includegraphics[width=7cm]{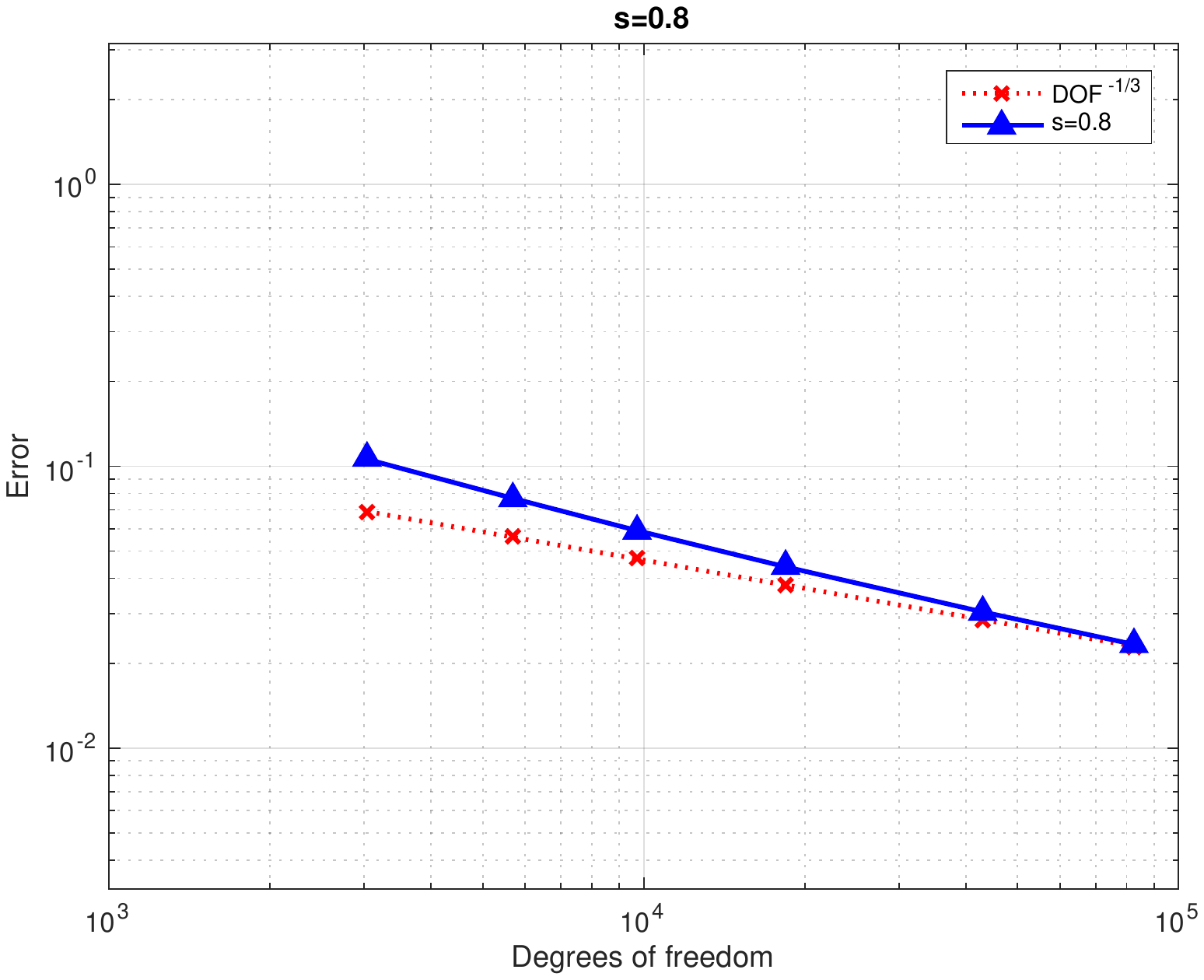}
  \caption{Rate of convergence: left $s=0.2$, right $s=0.8$.}
\end{figure}

\begin{remark}
The order of the error for the approximation
of $v$ in the $L^2$-norm is probably not the optimal possible.
Indeed, with a more complicated postprocessing one could approximate
the solution $u$ of Problem \eqref{CS1} with order almost $O(h)$ in
$H^1_{y^\alpha}(\mathcal{C})$ and, by the trace theorem
$\|v\|_{\mathbb{H}^s(\Omega)}\le C \|v\|_{H^1_{y^\alpha}(\mathcal{C})}$
proved in \cite[Proposition 2.5]{NOS}, one would have the same order
for the approximation of $v$ in the $\mathbb{H}^s$-norm. Therefore, it is reasonable
to expect a higher order in $L^2$. Let us mention also that, as far as we know,
such a higher order error estimate has not been proved either for the standard method
analyzed in \cite{NOS}.  This problem requires a different analysis and
will be the object of our further research.
\end{remark}

\bigskip

{\bf Acknowledgement}: We thank Enrique Ot\'arola for helpful comments.

\bibliographystyle{ams}

\end{document}